\numberwithin{equation}{section}
\theoremstyle{plain}
\newtheorem{lemma}{Lemma}[section]
\newtheorem{proposition}[lemma]{Proposition}
\newtheorem{proposition/definition}[lemma]{Proposition/Definition}
\newtheorem{theorem}[lemma]{Theorem}
\newtheorem{corollary}[lemma]{Corollary}
\theoremstyle{definition}
\newtheorem{definition}[lemma]{Definition}
\newtheorem{remark}[lemma]{Remark}
\newtheorem{example}[lemma]{Example}
\DeclareMathOperator{\id}{id}
\DeclareMathOperator{\im}{im}
\DeclareMathOperator{\Hom}{Hom}
\DeclareMathOperator{\gr}{Gr}
\DeclareMathAlphabet{\mathpzc}{OT1}{pzc}{m}{it}
\newcommand{\good}{\text{$\calF$}}
\newcommand{\SymplFol}{\text{$\mathpzc{SymplFol}$}}
\newcommand{\RegPoiss}{\text{$\mathpzc{RegPoiss}$}}
\newcommand{\bfv}{\mathbf{v}}
\newcommand{\calF}{\mathcal{F}}
\newcommand{\calI}{\mathcal{I}}
\newcommand{\calL}{\mathcal{L}}
\newcommand{\calQ}{\mathcal{Q}}
\newcommand{\calR}{\mathcal{R}}
\newcommand{\bbN}{\mathbb{N}}
\newcommand{\bbR}{\mathbb{R}}
\newcommand{\bbT}{\mathbb{T}}
\newcommand{\bbZ}{\mathbb{Z}}
\newcommand{\frakX}{\mathfrak{X}}
\newcommand{\fraka}{\mathfrak{a}}
\newcommand{\frakh}{\mathfrak{h}}
\newcommand{\frakl}{\mathfrak{l}}
\newcommand{\frakm}{\mathfrak{m}}
\newcommand{\rmd}{\mathrm{d}}
\renewcommand{\phi}{\varphi}
\newcommand{\eps}{\varepsilon}
\newcommand{\ldsb}{[\![}
\newcommand{\rdsb}{]\!]}
\newcommand{\ldab}{\langle\!\langle}
\newcommand{\rdab}{\rangle\!\rangle}
\renewcommand{\theta}{\vartheta}
\newcommand{\pr}{\operatorname{pr}}
\title[]{Deformations of Symplectic Foliations: algebraic aspects}
\author{Stephane Geudens}
\address{Department of Mathematics, University College London, 25 Gordon Street, London WC1H 0AY, UK}
\email{\href{mailto:s.geudens@ucl.ac.uk}{\underline{\smash{s.geudens@ucl.ac.uk}}}}
\author{Alfonso G. Tortorella}
\address{DipMat, Universit\`{a} degli Studi di Salerno, Via Giovanni Paolo II n°132, 84084  Fisciano (SA), Italy}
\email{\href{mailto:atortorella@unisa.it}{\underline{\smash{atortorella@unisa.it}}}}
\author{Marco Zambon}
\address{Department of Mathematics, KU Leuven, Celestijnenlaan 200B, 3001 Leuven, Belgium}
\email{\href{mailto:marco.zambon@kuleuven.be}{\underline{\smash{marco.zambon@kuleuven.be}}}}
\keywords{symplectic geometry, Poisson geometry, deformation complex, $L_\infty$-algebra, Dirac geometry.}
\subjclass[2020]{53D05, 53D17, 58H15, 58A50}
\begin{document}
	
	\begin{abstract}
			In \cite{DefSF}, we developed the deformation theory of symplectic foliations, focusing on geometric aspects.
			Here, we address some algebraic questions that arose naturally. We show that the $L_{\infty}$-algebra constructed in \cite{DefSF} is independent of the choices made, and we prove that the gauge equivalence of Maurer-Cartan elements corresponds to the equivalence by isotopies of symplectic foliations.
	\end{abstract}
	
	\maketitle
	
	\setcounter{tocdepth}{1}
	
	\tableofcontents
	
	\section*{Introduction}

A symplectic foliation $(\mathcal{F},\omega)$ on a manifold $M$ is a foliation $\mathcal{F}$ equipped with a leafwise two-form $\omega\in\Omega^{2}(\mathcal{F})$ that is closed and non-degenerate. Equivalently, a symplectic foliation can be viewed as a regular Poisson structure, i.e. a constant rank bivector field $\Pi\in\mathfrak{X}^{2}(M)$ for which the Schouten-Nijenhuis bracket $[\Pi,\Pi]_{SN}$ vanishes. In a companion paper \cite{DefSF}, we developed the deformation theory of symplectic foliations, taking the Poisson geometry point of view. Our main result there describes small deformations of a given rank $2k$ symplectic foliation $(\mathcal{F},\omega)$ in terms of Maurer-Cartan elements of a suitable $L_{\infty}[1]$-algebra. More precisely, making a choice of distribution $G$ complementary to $T\mathcal{F}$, one can construct an $L_{\infty}[1]$-algebra $(\frakX_{\good}^\bullet(M)[2],\{\frakl^G_k\})$ and a bijection
\begin{equation}\label{eq:bijection}
\big\{\text{Small Maurer-Cartan elements of}\ (\frakX_{\good}^\bullet(M)[2],\{\frakl^G_k\})\big\}\leftrightarrow\big\{(\widetilde{\mathcal{F}},\widetilde{\omega})\in\SymplFol^{2k}(M):\ T\widetilde{\mathcal{F}}\pitchfork G\big\}.
\end{equation}

\smallskip
The present paper is a follow-up of \cite{DefSF} which answers two mostly algebraic questions that were left unaddressed in \cite{DefSF}. The problems that we consider here are the following.

First, while the construction of the $L_{\infty}[1]$-algebra $(\frakX_{\good}^\bullet(M)[2],\{\frakl^G_k\})$ involves a choice of complement $G$ to the foliation $T\mathcal{F}$, we show in Cor.~\ref{cor:preservessubalg} that different choices of complement produce $L_{\infty}[1]$-algebras that are canonically isomorphic. Hence, any symplectic foliation $(\mathcal{F},\omega)$ {has an attached} $L_{\infty}[1]$-algebra governing its deformation problem, which is essentially canonical. 

Second, we address equivalences of deformations of a symplectic foliation. On one hand, there is a \emph{geometric} notion of equivalence, which declares two symplectic foliations to be equivalent if they are related by a diffeomorphism isotopic to the identity. On the other hand, there is the \emph{algebraic} notion of gauge equivalence for Maurer-Cartan elements of the $L_{\infty}[1]$-algebra $(\frakX_{\good}^\bullet(M)[2],\{\frakl^G_k\})$. We show in Thm.~\ref{thm:gauge} that these two notions of equivalence essentially agree under the bijection \eqref{eq:bijection}. This result can be used to draw conclusions about the moduli space of symplectic foliations $\SymplFol^{2k}(M)/\text{Diff}_{0}(M)$, where we quotient by isotopies. It shows in particular that the cohomology group $H^{2}_{\mathcal{F}}(M,\Pi)$ of the $L_{\infty}[1]$-algebra $(\frakX_{\good}^\bullet(M)[2],\{\frakl^G_k\})$ plays the role of formal tangent space to the moduli space, i.e.
\[
T_{[(\mathcal{F},\omega)]}\big(\SymplFol^{2k}(M)/\text{Diff}_{0}(M)\big)\cong H^{2}_{\mathcal{F}}(M,\Pi).
\]
We show in Ex.~\ref{ex:kronecker} that this formal tangent space can vary drastically from point to point, which prevents the moduli space $\SymplFol^{2k}(M)/\text{Diff}_{0}(M)$ from being smooth.

\bigskip
\noindent
\textbf{Acknowledgements:}	
S.G. acknowledges support from the UCL Institute for Mathematics \& Statistical Science (IMSS). 
During the preparation of this paper, A.G.T.~has been supported by the FWO postdoctoral fellowship 1204019N at KU Leuven, the FCT postdoctoral fellowship 2021.04201.CEECIND at CMUC, and also by CMUP which is financed by national funds through FCT – Fundação para a Ciência e a Tecnologia, I.P., under the project with reference UIDB/00144/2020.
Further he is member of the National Group for Algebraic and Geometric Structures, and their Applications (GNSAGA – INdAM).
M.Z. acknowledges partial support by
the FWO and FNRS under EOS projects G0H4518N and G0I2222N, by FWO project G0B3523N, and by the long term structural funding -- Methusalem grant of the Flemish Government (Belgium).

	\section{Background on Symplectic Foliations and their Deformations}\label{sec:one}
	
	We recall the definitions and statements of \cite{DefSF} that will be needed in the sequel. The objects of interest are symplectic foliations, and our main result in \cite{DefSF} is a local parametrization for the space of symplectic foliations in terms of Maurer-Cartan elements of a suitable $L_{\infty}[1]$-algebra.
{We refer to Appendix \ref{app:L_infty_algebras} for the necessary background on $L_{\infty}[1]$-algebras and to Appendix \ref{app:Diracgeom} for  Dirac geometry.}

\subsection{Symplectic foliations, regular Poisson structures and good multivector fields}	
We introduce symplectic foliations and review how they can be seen as regular Poisson structures. We also recall the space of good multivector fields, as introduced in  \cite{DefSF}, which supports the $L_{\infty}[1]$-structure that we aim to describe in this section. This subsection is based on  \cite[\S 1]{DefSF}.	

	\begin{definition}
		\label{def:symplectic_foliation}
		A \emph{symplectic foliation} on a manifold $M$ consists of a foliation $\calF$ of $M$ with a \emph{leafwise symplectic structure} $\omega$, i.e.~a non-degenerate $2$-cocycle $\omega$ in the leafwise de Rham complex $(\Omega^\bullet(\calF),\rmd_\calF)$.
	\end{definition}
	
	We denote by $\SymplFol^{2k}(M)$ the space of all symplectic foliations on $M$ having rank $2k$. We studied this space in \cite{DefSF} taking a slightly different point of view. Namely, we took advantage of the fact that a symplectic foliation is the same thing as a regular Poisson structure, as we now explain.

	\begin{definition}
		A \emph{Poisson structure} on a manifold $M$ is a bivector field $\Pi\in\frakX^2(M)$ for which the Schouten-Nijenhuis bracket $[\Pi,\Pi]_{SN}$ vanishes. A Poisson structure $\Pi$ is \emph{regular} if the associated vector bundle morphism $\Pi^\sharp:T^\ast M\to TM,\ \eta\mapsto\iota_\eta\Pi$ has constant rank.
		In this case, we refer to the rank of $\Pi^\sharp$ as the \emph{rank} of $\Pi$. The distribution $T\calF:=\im\Pi^\sharp$ integrates to the \emph{characteristic foliation} $\calF$ of $\Pi$.
	\end{definition}
	
 	We denote by $\RegPoiss^{2k}(M)$ the space of all regular Poisson structures on $M$ having rank $2k$. There is a bijection between $\RegPoiss^{2k}(M)$ and $\SymplFol^{2k}(M)$, because of the following.
	Note that a regular bivector field $\Pi$ with $\im\Pi^\sharp=T\calF$ can be viewed as a non-degenerate element of $\frakX^{2}(\calF):=\Gamma(\wedge^{2}T\calF)$.
	Therefore, a regular Poisson structure with characteristic foliation $\calF$ is the same thing as a non-degenerate Maurer-Cartan element of the graded Lie algebra $(\frakX^\bullet(\calF)[1],[-,-]_{\sf SN})$.
	Further, the relation $\Pi=-\omega^{-1}$ establishes a one-to-one correspondence between non-degenerate Maurer-Cartan elements $\Pi$ of $(\frakX^\bullet(\calF)[1],[-,-]_{\sf SN})$ and non-degenerate $2$-cocycles $\omega$ in the leafwise de Rham complex $(\Omega^\bullet(\calF),\rmd_\calF)$.
	This yields the correspondence
	\begin{equation*}
			\SymplFol^{2k}(M)\longrightarrow\RegPoiss^{2k}(M),\ \ (\calF,\omega)\longmapsto -\omega^{-1}.
		\end{equation*}

	In \cite{DefSF}, we gave a parametrization of the space $\RegPoiss^{2k}(M)$ around a given regular Poisson structure $\Pi$. It involves a suitable subspace of the multivector fields on $M$, which we call \emph{good multivector fields}.

	\begin{definition}
		\label{def:good_multi-vector_fields}
		Let $(M,\Pi)$ be a regular Poisson manifold with {characteristic} foliation $\calF$, and denote by $T\calF^{0}$ the annihilator of $T\calF$. The graded space of \emph{good multivector fields} is defined by
		\begin{equation*}
			\label{eq:good_k-vectorfields}
			\frakX^\bullet_\good(M)=\{W\in\frakX^\bullet(M):\iota_\alpha\iota_\beta W=0\ \text{for all}\ \alpha,\beta\in\Gamma(T^\circ\calF)\}.
		\end{equation*}
	\end{definition}

Note that $\frakX^\bullet_\good(M)$ is the graded $\frakX^\bullet(\calF)$-submodule of $\frakX^\bullet(M)$ generated by $C^\infty(M)\oplus\frakX(M)$. {We denote by $d_{\Pi}:=[\Pi,-]_{SN}$ the Poisson differential.}
It turns out that $(\frakX^\bullet_\good(M),d_{\Pi})$ is a subcomplex of the Lichnerowicz complex $(\frakX^{\bullet}(M),d_{\Pi})$, whose $2$-cocycles are infinitesimal deformations of $\Pi$ {as a regular Poisson structure}:
\begin{equation*}
	T_\Pi\big(\RegPoiss^{2k}(M)\big)=\ker\big(\rmd_\Pi:\frakX^2_\good(M)\to\frakX^3_\good(M)\big).
\end{equation*}

	\subsection{Parametrizing Nearby Regular Bivector Fields}
	\label{subsec:param}
	
This subsection is based on  \cite[\S 2]{DefSF}.	
	Given a rank $2k$ symplectic foliation $(\calF,\omega)$ on $M$, let $\Pi$ be the corresponding rank $2k$ regular Poisson structure, so that $T\calF=\im\Pi^\sharp$. From now on, we fix the following pieces of data:
	
\bigskip	
	\fbox{\parbox{0.95\textwidth}{
	\begin{itemize}
	    \item $G$ is a distribution complementary to $T\calF$, i.e.~$TM=G\oplus T\calF$.
	    \item $\gamma$ is the extension of the leafwise symplectic form $\omega$ by zero on $G$, i.e. it satisfies
	    \begin{equation*}
		\gamma^\flat|_{T\calF}=\omega^\flat\qquad\text{and}\qquad\gamma^\flat|_G=0.
	\end{equation*}
	\item We denote by $\pr_{T\calF}:TM\rightarrow T\calF$ and $\pr_G: TM\rightarrow G$ the projection maps.
	\end{itemize}
	}}
\smallskip		
	
Here $\gamma^\flat\colon TM\to T^*M$ is defined by $\gamma^\flat(X)=\iota_X\gamma$.
	We will parametrize the bivector fields close to $\Pi$ using the so-called \emph{Dirac exponential map} associated with $G$ and $\Pi$. 
	This parametrization turns out to be well-suited to single out the bivector fields  close to $\Pi$ which are additionally regular of rank $2k$.
	
\bigskip

Given a bivector field $Z\in\frakX^{2}(M)$, the two-form $\gamma\in\Omega^{2}(M)$ can be used to produce a new bivector field $Z^{\gamma}\in\frakX^{2}(M)$, provided that $Z$ is small enough. More precisely, $Z$ is required to belong to the  neighborhood $\calI_\gamma$ of the zero section in $\wedge^2TM$ given by
\begin{equation*}
		\label{eq:def:I_gamma}
		\calI_\gamma:=\sqcup_{x\in M}\{Z_x\in\wedge^2T_xM\mid \id_{T^\ast_xM}+\gamma^\flat_x\circ Z_x^\sharp:T_x^\ast M\to T_x^\ast M\ \text{is invertible}\}.
\end{equation*}
The bivector field $Z^{\gamma}$ in question, called the \emph{gauge transform of $Z$ by $\gamma$}, is determined by
\begin{equation}\label{eq:gammasharp}
(Z^{\gamma})^{\sharp}=Z^{\sharp}\circ(\id_{T^{*}M}+\gamma^{\flat}\circ Z^{\sharp})^{-1}.
\end{equation}
We can now define the Dirac exponential map, which parametrizes bivector fields close to $\Pi$.

	\begin{definition}
		The \emph{Dirac exponential map} $\exp_G$ associated with $G$ and $\Pi$ is the map
		\[
		\exp_G\colon\calI_\gamma\xrightarrow{\sim}\Pi+\calI_{-\gamma}, \ \ Z\mapsto\Pi+Z^\gamma.
		\]
		It is characterized by the property that for all $Z\in\Gamma(\calI_\gamma)$,
		\[(\exp_G(Z))^\sharp=\Pi^\sharp+Z^\sharp\circ(\id_{T^\ast M}+\gamma^\flat\circ Z^\sharp)^{-1}.
		\]
	\end{definition}
	
	\begin{remark}\label{rem:exp-action}
	The map $\exp_G$ has a natural description in Dirac geometric terms. Recall that the two-form $\gamma\in\Omega^{2}(M)$ and the bivector field $\Pi\in\frakX^{2}(M)$ determine orthogonal transformations $\calR_{\gamma}, \calR_{\Pi}$ of the generalized tangent bundle $(\bbT M,\ldab-,-\rdab)$, given by
	\begin{align*}
	&\calR_\gamma:\bbT M\to\bbT M:\ X+\alpha\mapsto X+\alpha+\iota_X\gamma,\\
	&\calR_\Pi:\bbT M\to\bbT M: X+\alpha\mapsto X+\Pi^{\sharp}(\alpha)+\alpha.
	\end{align*}
	The action of $\exp_G$ on sections $Z\in\Gamma(\calI_\gamma)$ is then given by
\begin{equation}\label{eq:grexp}
  \gr(\exp_G(Z))=\calR_\Pi\calR_\gamma\gr(Z),
\end{equation}	
{where $\gr$ denotes the graph.} 
	\end{remark}

	Parametrizing bivector fields close to $\Pi$ with the Dirac exponential map has the advantage that the constant rank condition is turned into a linear condition. Namely, the parameter is required to belong to the linear subspace of good bivector fields (see Def.~\ref{def:good_multi-vector_fields}). Consequently, upon restricting $\exp_G$ to $\frakX_\good^2(M)$, we obtain a submanifold chart for the space of regular rank $2k$ bivector fields on $M$.

	\begin{theorem}
		\label{theor:nearby_regular_bivectors}
		The Dirac exponential map
		\begin{equation*}
			\exp_G\colon\calI_{\gamma}\overset{\sim}{\longrightarrow}\Pi+\calI_{-\gamma},\ \  Z\longmapsto\exp_G(Z):=\Pi+Z^\gamma
		\end{equation*}
		induces the following bijection at the level of sections:
		\begin{equation*}
			\Gamma(\calI_{\gamma})\cap\frakX_\good^2(M)\overset{\sim}{\longrightarrow}\{W\in\frakX^2_{\text{reg-2k}}(M)\mid \im W^\sharp\pitchfork G\},\ \ Z\longmapsto\exp_G(Z).
		\end{equation*}
	\end{theorem}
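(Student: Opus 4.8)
The overall strategy is to exploit that $\exp_G\colon\Gamma(\calI_\gamma)\xrightarrow{\ \sim\ }\Pi+\Gamma(\calI_{-\gamma})$ is already known to be a bijection, so it remains only to identify, on each side, the good bivector fields with the regular rank-$2k$ ones whose image is transverse to $G$. Since being good, being regular of rank $2k$, and satisfying $\im(-)^\sharp\pitchfork G$ are all pointwise conditions and $\exp_G$ is computed fibrewise, I would fix a point $x$ and argue purely in $\bbT_xM$, writing $C:=\calR_\Pi\calR_\gamma$, which by Rem.~\ref{rem:exp-action} satisfies $\gr(\exp_G(Z))=C\,\gr(Z)$ and is an invertible orthogonal transformation with inverse $\calR_{-\gamma}\calR_{-\Pi}$.

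The heart of the argument is the computation of $C$ on two Lagrangian subbundles. Using the splittings $T_xM=G_x\oplus T_x\calF$ and $T^*_xM=G_x^{0}\oplus T_x\calF^{0}$ (where $G^{0}=\Ann(G)$, $T\calF^{0}=\Ann(T\calF)$), together with $\gamma^\flat|_{G}=0$, $\gamma^\flat|_{T\calF}=\omega^\flat$ and $\Pi^\sharp=-(\omega^\flat)^{-1}$ on $T\calF$, one gets $\Pi^\sharp\gamma^\flat=-\pr_{T\calF}$ and $\gamma^\flat\Pi^\sharp=-\pr_{G^{0}}$, and a short computation then yields
\[
C(TM)=G\oplus G^{0}, \qquad C\bigl(T\calF\oplus T\calF^{0}\bigr)=T^*M .
\]
This is the step I expect to be the main obstacle — not conceptually deep, but it requires careful bookkeeping of the various annihilators and of the isomorphism $\omega^\flat\colon T\calF\xrightarrow{\sim}G^{0}$.

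From the first identity one reads off a Dirac criterion: for a bivector field $W$ one has $W\in\Pi+\Gamma(\calI_{-\gamma})$ if and only if $\gr(W)\cap(G\oplus G^{0})=0$ at every point, since $C^{-1}\gr(W)$ is the graph of a bivector field $Z$ (with $Z\in\Gamma(\calI_\gamma)$ and $\exp_G(Z)=W$) precisely when it meets $TM$ trivially, i.e.\ when $\gr(W)\cap C(TM)=0$. Now for the forward direction: if $Z\in\Gamma(\calI_\gamma)$ is good then, with $W=\exp_G(Z)$, from $\gr(Z)\cap TM=0$ and $C(TM)=G\oplus G^{0}$ we get $\gr(W)\cap(G\oplus G^{0})=0$, hence $\gr(W)\oplus(G\oplus G^{0})=\bbT_xM$ by a dimension count and, projecting onto $TM$, $\im W^\sharp+G=T_xM$, so $\rank W^\sharp\ge 2k$. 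On the other hand I translate goodness via the second identity: $Z$ is good iff $Z^\sharp(T\calF^{0})\subseteq T\calF$, equivalently (since $\pr_{T^*M}$ is injective on $\gr(Z)$ and maps $T\calF\oplus T\calF^{0}$ onto $T\calF^{0}$) iff $\dim\bigl(\gr(Z)\cap(T\calF\oplus T\calF^{0})\bigr)\ge\dim M-2k$; applying the isomorphism $C$ this becomes $\dim\bigl(\gr(W)\cap T^*M\bigr)=\dim\ker W^\sharp\ge\dim M-2k$, i.e.\ $\rank W^\sharp\le 2k$. Combining, $\rank W^\sharp=2k$ at every point, and then $\im W^\sharp+G=T_xM$ forces $\im W^\sharp\oplus G=T_xM$, i.e.\ $\im W^\sharp\pitchfork G$. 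Thus $\exp_G$ sends $\Gamma(\calI_\gamma)\cap\frakX^2_\good(M)$ into the target set.

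It remains to see that the target set is exactly the image, i.e.\ that every regular rank-$2k$ bivector field $W$ with $\im W^\sharp\pitchfork G$ lies in $\Pi+\Gamma(\calI_{-\gamma})$ and has good preimage. For the first point, by the Dirac criterion I need $\gr(W)\cap(G\oplus G^{0})=0$: if $W^\sharp\alpha+\alpha$ lies in this intersection then $W^\sharp\alpha\in\im W^\sharp\cap G=0$ (transversality plus dimensions), so $\alpha\in\ker W^\sharp\cap G^{0}=(\im W^\sharp)^{0}\cap G^{0}=(\im W^\sharp+G)^{0}=0$, using the skew-adjointness identity $\ker W^\sharp=(\im W^\sharp)^{0}$. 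Hence $W=\exp_G(Z)$ for some $Z\in\Gamma(\calI_\gamma)$; and since $\rank W^\sharp\equiv 2k$, the equivalence established above, read backwards, shows $Z$ is good. This exhibits $\exp_G$ as the desired bijection between $\Gamma(\calI_\gamma)\cap\frakX^2_\good(M)$ and $\{W\in\frakX^2_{\text{reg-2k}}(M)\mid\im W^\sharp\pitchfork G\}$.
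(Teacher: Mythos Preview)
The paper does not actually prove this theorem: it is stated in \S\ref{subsec:param} as a result recalled from the companion paper \cite{DefSF}, so there is no in-paper proof to compare against. That said, your argument is correct and self-contained. The two identities $C(TM)=G\oplus G^{0}$ and $C(T\calF\oplus T\calF^{0})=T^{*}M$ for $C=\calR_\Pi\calR_\gamma$ are exactly right (the first is in fact Lemma~\ref{lem:Koszul_algebra:Courant_iso}, see~\eqref{eq:lem:Koszul_algebra:almost_Lie_algbd_iso}), and once these are in hand your translation of ``good'' and ``regular of rank $2k$ with $\im W^\sharp\pitchfork G$'' into intersection conditions on graphs is clean. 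One small tightening: your inequality $\dim\bigl(\gr(Z)\cap(T\calF\oplus T\calF^{0})\bigr)\ge\dim M-2k$ is in fact an equality, since $\pr_{T^*M}$ embeds that intersection into $T\calF^{0}$; stating it as an equality makes the equivalence with goodness (and hence the reverse implication) more transparent.
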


	\subsection{Parametrizing nearby regular Poisson structures}
	\label{sec:Koszul_algebra}
	
	By Thm.~\ref{theor:nearby_regular_bivectors}, a local parametrization for the space of rank $2k$ regular Poisson structures is obtained if we can single out the elements in $\Gamma(\calI_{\gamma})\cap\frakX_\good^2(M)$ whose image under the Dirac exponential map is a Poisson structure. We will see that these elements are exactly the Maurer-Cartan elements in $\Gamma(\calI_{\gamma})$ for a suitable $L_{\infty}[1]$-algebra structure on the shifted space $\frakX_\good^{\bullet}(M)[2]$ of good multivector fields. This subsection is based on  \cite[\S~3.1, \S~3.2]{DefSF}.	

	\bigskip
	
{Recall that a bivector field is Poisson if{f} its graph is a Dirac structure.}	We saw in Rem.~\ref{rem:exp-action} that the map $\exp_G$ has a natural description in terms of Dirac geometry. This leads us to studying deformations of the Dirac structure $\gr\Pi\subset \bbT M$, rather than studying deformations of $\Pi$ directly.
	Deformations of a given Dirac structure $A\subset \bbT M$ are described conveniently by choosing a complementary almost Dirac structure $B$, as recalled in Appendix \ref{app:Deformations_Dirac_Structures}. Such a choice yields an $L_{\infty}$-algebra $(\Omega^\bullet(A)[1],\{\frakm_k^{B}\})$ whose {Maurer-Cartan} (MC) elements parametrize the Dirac structures transverse to $B$. For the Dirac structure $A:=\gr\Pi$, the most straightforward choice of complement would be $B:=TM$, but this approach is not well-suited to single out \emph{regular} deformations of $\Pi$.  Indeed, this way one recovers (up to isomorphism) the usual Koszul dgLa $(\frakX^\bullet(M)[1],\rmd_\Pi, [-,-]_{SN})$, whose MC elements $Z$ parametrize Poisson structures $P\in\mathfrak{X}^{2}(M)$ via the relation
 \[
 P=\Pi+Z.
 \]
 However, the bivector fields $Z\in\mathfrak{X}^{2}(M)$ for which $\Pi+Z$ has constant rank $2k$ do not form a vector space, which is why the complement $TM$ is not suitable for our purposes. A better choice is the following.

\begin{lemma}
The almost Dirac structure $G\oplus G^{0}\subset\bbT M$ is complementary to $\gr\Pi$.
\end{lemma}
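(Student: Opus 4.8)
The plan is to verify directly the two conditions defining a complementary almost Dirac structure: that $G \oplus G^0$ is an almost Dirac structure (i.e.\ a maximal isotropic subbundle of $\bbT M$ with respect to $\ldab-,-\rdab$), and that it intersects $\gr\Pi$ trivially and has complementary rank. Recall that the pairing on $\bbT M$ is $\ldab X+\alpha, Y+\beta\rdab = \tfrac12(\alpha(Y)+\beta(X))$. For isotropy, take two sections $X+\alpha$ and $Y+\beta$ with $X,Y \in \Gamma(G)$ and $\alpha,\beta \in \Gamma(G^0)$; then $\alpha(Y)=0$ since $Y \in G$ and $\alpha \in G^0$, and likewise $\beta(X)=0$, so the pairing vanishes. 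For maximality, note $\rank G + \rank G^0 = \rank G + (\dim M - \rank G) = \dim M$, which is exactly half of $\rank \bbT M = 2\dim M$; since an isotropic subbundle of half the total rank is automatically maximal, $G \oplus G^0$ is almost Dirac.

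Next I would check transversality to $\gr\Pi = \{\Pi^\sharp(\eta) + \eta : \eta \in T^*M\}$. Suppose $v \in (G \oplus G^0) \cap \gr\Pi$; write $v = X + \alpha$ with $X \in \Gamma(G)$, $\alpha \in \Gamma(G^0)$, and simultaneously $v = \Pi^\sharp(\eta) + \eta$ for some $\eta \in T^*M$. Comparing cotangent components gives $\eta = \alpha \in G^0 = (T\calF)^{00}$-ish — more precisely $G^0$ is the annihilator of $G$, but the key point is $\alpha$ annihilates $G$. Comparing tangent components gives $X = \Pi^\sharp(\alpha)$. Now $\im \Pi^\sharp = T\calF$, so $X \in G \cap T\calF = 0$ because $G$ is complementary to $T\calF$; hence $X = 0$, which forces $\Pi^\sharp(\alpha) = 0$. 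Then I would use that $\Pi^\sharp$ restricted to $(T\calF)^0$-transverse covectors is controlled: since $\ker \Pi^\sharp = (T\calF)^0$ (the annihilator of $T\calF = \im\Pi^\sharp$, using that $\Pi$ is regular and $\Pi^\sharp$ is skew), we would need $\alpha \in (T\calF)^0$. Combined with $\alpha \in G^0$, and using $TM = G \oplus T\calF$ hence $T^*M = G^0 \oplus (T\calF)^0$ as a direct sum of annihilators, we get $\alpha \in G^0 \cap (T\calF)^0 = 0$. Therefore $v = 0$, so the intersection is trivial; since $\rank(G \oplus G^0) + \rank \gr\Pi = \dim M + \dim M = \rank \bbT M$, the two are complementary.

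The main obstacle — though it is more a matter of care than of genuine difficulty — is getting the annihilator bookkeeping exactly right: one must be sure that $\ker \Pi^\sharp = (T\calF)^0$ (which holds precisely because $\Pi^\sharp$ is skew-symmetric, so its image and the annihilator of its kernel coincide, and $\im\Pi^\sharp = T\calF$), and that the splitting $TM = G \oplus T\calF$ dualizes to $T^*M = G^0 \oplus (T\calF)^0$. Once these two linear-algebra facts are in place, the argument above closes. I would present it as: (i) isotropy is immediate from the pairing; (ii) maximality is the dimension count; (iii) $\gr\Pi \cap (G \oplus G^0) = 0$ follows by decomposing an element into tangent and cotangent parts and invoking $G \cap T\calF = 0$ together with $G^0 \cap \ker(\Pi^\sharp)^* = 0$; and (iv) the rank count upgrades trivial intersection to a direct-sum decomposition $\bbT M = \gr\Pi \oplus (G \oplus G^0)$.
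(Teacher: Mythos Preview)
Your proof is correct; the paper states this lemma without proof, treating it as an immediate consequence of the setup. The only minor discrepancy is that the paper's pairing convention is $\ldab X+\alpha,Y+\beta\rdab=\alpha(Y)+\beta(X)$ without the factor $\tfrac12$, but this is irrelevant for isotropy, and your bookkeeping with $\ker\Pi^\sharp=(T\calF)^0$ and $G^0\cap(T\calF)^0=0$ is exactly what is needed.
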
	

It would be natural to apply Lemmas~\ref{lem:2.6first} and~\ref{lem:2.6second} with input $E=\bbT M$, $A=\gr\Pi$ and $B=G\oplus G^{0}$, yielding an $L_\infty[1]$-algebra $\big(\Omega^\bullet(\gr\Pi)[2],\big\{\mu^{G\oplus G^{0}}_k\big\}\big)$ which controls the deformation problem of the Dirac structure $\gr\Pi$. However, we take an indirect but more convenient approach instead.
Since we want to parametrize deformations of $\Pi$ with the Dirac exponential map, whose action on sections is given by $\calR_{\Pi}\calR_{\gamma}$ (see Rem.~\ref{rem:exp-action}), we first transport the standard Courant algebroid structure along $\calR_{\Pi}\calR_{\gamma}$. Doing so also simplifies the situation, since it transforms the splitting $\bbT M=\gr\Pi\oplus(G\oplus G^{0})$ into $\bbT M=T^{*}M\oplus TM$. Only then we apply Lemma~\ref{lem:2.6first}, yielding an $L_\infty[1]$-algebra structure $\{\frakl_k^{G}\}$ on $\frakX^\bullet(M)[2]$.

	\begin{lemma}
	\label{lem:Koszul_algebra:Courant_iso}
	There exists a unique Courant algebroid structure $(\ldsb-,-\rdsb_G,\rho_G)$ on $(\bbT M,\ldab-,-\rdab)$, 
	such that the orthogonal transformation
	\begin{align*}
	&\calR_\Pi\calR_\gamma:(\bbT M,\ldab-,-\rdab,\ldsb-,-\rdsb_G,\rho_G)\overset{\sim}{\longrightarrow}(\bbT M,\ldab-,-\rdab,\ldsb-,-\rdsb,\pr_{TM}),\nonumber\\
	&\hspace{4.05cm}X+\alpha\longmapsto (\pr_GX+\iota_\alpha\Pi)+(\alpha+\iota_X\gamma),
	\end{align*}
	is a Courant algebroid isomorphism.
	In particular, the latter induces: 
\begin{itemize}
\item 
	the Lie algebroid isomorphism
	\begin{equation}
	\label{eq:lem:Koszul_algebra:Lie_algbd_iso}
	\calR_\Pi|_{T^\ast M}:T^\ast M\overset{\sim}{\longrightarrow}\gr\Pi,\ \alpha\longmapsto\iota_\alpha\Pi+\alpha,
	\end{equation}
	where $T^\ast M$ carries the Lie algebroid structure associated with $\Pi$.
\item  the almost Lie algebroid isomorphism
	\begin{equation}
	\label{eq:lem:Koszul_algebra:almost_Lie_algbd_iso}
	(\calR_\Pi\calR_\gamma)|_{TM}:TM\overset{\sim}{\longrightarrow}G\oplus G^{0},\ X\longmapsto\pr_GX+\iota_X\gamma,
	\end{equation}
	where $TM$ carries the almost Lie algebroid structure $([-,-]_\gamma,\rho_\gamma)$ defined by
	\begin{equation}
	\label{eq:lem:Koszul_algebra:almost_Lie_algbd_structure}
	[X,Y]_\gamma=[\pr_GX,\pr_GY]-\Pi^\sharp(\calL_{\pr_GX}\iota_Y\gamma-\calL_{\pr_GY}\iota_X\gamma),\qquad\rho_\gamma(X)=\pr_GX.
	\end{equation}
	\end{itemize}
\end{lemma}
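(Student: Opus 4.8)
The plan is to prove the statement by transport of structure along the orthogonal bundle automorphism $\calR_\Pi\calR_\gamma$ of $(\bbT M,\ldab-,-\rdab)$. First I would make this map explicit. Both $\calR_\gamma$ and $\calR_\Pi$ preserve the pairing $\ldab-,-\rdab$ (the first because $\gamma$ is skew, the second because $\Pi$ is skew), hence so does $\calR_\Pi\calR_\gamma$. Composing the formulas of Rem.~\ref{rem:exp-action} and using $\gamma^\flat|_{T\calF}=\omega^\flat$, $\gamma^\flat|_G=0$ and $\Pi=-\omega^{-1}$ --- equivalently $\Pi^\sharp\circ\gamma^\flat=-\pr_{T\calF}$ on $TM$ --- one obtains
\[\calR_\Pi\calR_\gamma(X+\alpha)=\bigl(\pr_GX+\iota_\alpha\Pi\bigr)+\bigl(\alpha+\iota_X\gamma\bigr),\]
which in particular is a vector bundle isomorphism of $\bbT M$ covering $\id_M$.

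Next I would construct $(\ldsb-,-\rdsb_G,\rho_G)$ by pulling the standard Courant algebroid structure $(\ldsb-,-\rdsb,\pr_{TM})$ back along $\calR_\Pi\calR_\gamma$: for sections $e_1,e_2$ of $\bbT M$ set
\[\rho_G:=\pr_{TM}\circ\calR_\Pi\calR_\gamma,\qquad \ldsb e_1,e_2\rdsb_G:=(\calR_\Pi\calR_\gamma)^{-1}\ldsb(\calR_\Pi\calR_\gamma)e_1,(\calR_\Pi\calR_\gamma)e_2\rdsb .\]
Because $\calR_\Pi\calR_\gamma$ is a bundle isomorphism and is orthogonal for $\ldab-,-\rdab$, the Courant algebroid axioms transport and the transported structure is compatible with $\ldab-,-\rdab$; by construction $\calR_\Pi\calR_\gamma$ is then a Courant algebroid isomorphism onto the standard $\bbT M$. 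Uniqueness is automatic: any Courant algebroid structure on the domain turning the fixed bundle map $\calR_\Pi\calR_\gamma$ into an isomorphism onto the fixed standard target must coincide with this pullback.

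For the two ``in particular'' statements I would use that $\calR_\gamma$ fixes $T^\ast M\subset\bbT M$ pointwise, so $(\calR_\Pi\calR_\gamma)|_{T^\ast M}=\calR_\Pi|_{T^\ast M}\colon\alpha\mapsto\iota_\alpha\Pi+\alpha$ is an isomorphism onto the Dirac structure $\gr\Pi$, whereas $(\calR_\Pi\calR_\gamma)|_{TM}\colon X\mapsto\pr_GX+\iota_X\gamma$ is an isomorphism onto the maximal isotropic $G\oplus G^{0}$ (note $\iota_X\gamma\in G^{0}$, as $\gamma^\flat|_G=0$). Since $\gr\Pi$ is involutive for $\ldsb-,-\rdsb$, its preimage $T^\ast M$ is involutive for $\ldsb-,-\rdsb_G$, and unwinding the definitions via the standard identity $\ldsb\iota_\alpha\Pi+\alpha,\iota_\beta\Pi+\beta\rdsb=\iota_{\{\alpha,\beta\}_\Pi}\Pi+\{\alpha,\beta\}_\Pi$ --- with $\{\alpha,\beta\}_\Pi:=\calL_{\Pi^\sharp\alpha}\beta-\iota_{\Pi^\sharp\beta}\rmd\alpha$ the Koszul bracket --- gives $\ldsb\alpha,\beta\rdsb_G=\{\alpha,\beta\}_\Pi$ and $\rho_G(\alpha)=\Pi^\sharp\alpha$, i.e.~exactly the Lie algebroid of $\Pi$; this is the first bullet. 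For the second bullet, $G\oplus G^{0}$ is not involutive for $\ldsb-,-\rdsb$, so $TM$ inherits only the \emph{almost} Lie algebroid structure borne by a maximal isotropic complement of a Dirac structure (cf.~Appendix~\ref{app:Deformations_Dirac_Structures}): the bracket $\pr_{G\oplus G^{0}}\ldsb-,-\rdsb$ (projection along $\gr\Pi$) and anchor $\pr_{TM}|_{G\oplus G^{0}}$, transported back along $(\calR_\Pi\calR_\gamma)|_{TM}$. The anchor clearly becomes $\pr_G$, matching \eqref{eq:lem:Koszul_algebra:almost_Lie_algbd_structure}, and it remains to identify the transported bracket with $[-,-]_\gamma$.

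The only genuinely computational step, and the place where I expect the bookkeeping to demand care, is this last identification. One expands $\ldsb\pr_GX+\iota_X\gamma,\pr_GY+\iota_Y\gamma\rdsb$ with the Courant--Dorfman formula, projects the outcome onto $G\oplus G^{0}$ \emph{along $\gr\Pi$} (not along $TM$), and pulls it back. It is precisely this projection along $\gr\Pi$, carried out through the splitting $T^\ast M=G^{0}\oplus T\calF^{0}$ with $\Pi^\sharp|_{G^{0}}\colon G^{0}\xrightarrow{\sim}T\calF$, that turns the one-form contributions $\calL_{\pr_GX}\iota_Y\gamma$ and $\calL_{\pr_GY}\iota_X\gamma$ of the Courant bracket into the correction term $-\Pi^\sharp(\calL_{\pr_GX}\iota_Y\gamma-\calL_{\pr_GY}\iota_X\gamma)$ of \eqref{eq:lem:Koszul_algebra:almost_Lie_algbd_structure}; everything else amounts to keeping track of signs and of which projection is in use, which is routine once the mechanism is set up.
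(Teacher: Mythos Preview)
Your proposal is correct. Note, however, that the paper itself does not prove this lemma: it is recalled from the companion paper \cite{DefSF} as background material (cf.\ the opening of \S\ref{sec:one}), so there is no in-paper proof to compare against. That said, your argument---transport of structure along the orthogonal automorphism $\calR_\Pi\calR_\gamma$, followed by the explicit identification of the induced (almost) Lie algebroid structures on $T^\ast M$ and $TM$---is exactly the natural approach and matches what one finds in \cite{DefSF}. The key simplification $\Pi^\sharp\circ\gamma^\flat=-\pr_{T\calF}$, the observation that $\calR_\gamma$ fixes $T^\ast M$, and the mechanism by which projection along $\gr\Pi$ produces the $-\Pi^\sharp(\calL_{\pr_GX}\iota_Y\gamma-\calL_{\pr_GY}\iota_X\gamma)$ term are all identified correctly.
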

	
\begin{remark}\label{rem:Courant_tensor_twisted_TM}
  Denote by $\Upsilon_{TM}^G\in\Omega^3(M)$  the Courant tensor of the almost Dirac structure $TM$ in $(\bbT M,\ldab-,-\rdab,\ldsb-,-\rdsb_G,\rho_G)$, as in Rem.~\ref{rem:Courant_tensor}. It follows from Lemma~\ref{lem:Koszul_algebra:Courant_iso} that $\Upsilon_{TM}^G$ is the pullback along the isomorphism~\eqref{eq:lem:Koszul_algebra:almost_Lie_algbd_iso} of the Courant tensor of $G\oplus G^{0}$.
Consequently, for all $X,Y,Z\in\frakX(M)$,
	\begin{equation*}
	\Upsilon^G_{TM}(X,Y,Z)=\gamma(X,[\pr_G Y,\pr_G Z])+\gamma(Y,[\pr_G Z,\pr_G X])+\gamma(Z,[\pr_G X,\pr_G Y]).
	\end{equation*}	
Notice in particular that $\Upsilon^G_{TM}$ vanishes exactly when $G\subset TM$ is involutive.
\end{remark}
	
We now apply Lemma~\ref{lem:2.6first}  to the case where $E=(\bbT M,\ldab-,-\rdab,\ldsb-,-\rdsb_G,\rho_G)$ is the Courant algebroid mentioned in Lemma~\ref{lem:Koszul_algebra:Courant_iso}, with Dirac structure $A=T^\ast M$ and almost Dirac structure $B=TM$.
So the graded space $\frakX^\bullet(M)[2]$ inherits an $L_\infty[1]$-algebra structure $\{\frakl_k^G\}$, described in the following.

	\begin{proposition}
	\label{prop:Koszul_algebra}
	The $L_\infty[1]$-algebra associated with the Dirac structure $T^\ast M$ via the complementary almost Dirac structure $TM$ in the Courant algebroid $(\bbT M,\ldab-,-\rdab,\ldsb-,-\rdsb_G,\rho_G)$ 
	is $(\frakX^\bullet(M)[2],\{\frakl_k^G\})$, whose only non-trivial multibrackets $\frakl_1^G,\frakl_2^G,\frakl_3^G$ are given by the following.
	\begin{itemize}
		\item The unary bracket $\frakl_1^G$ is the Poisson differential $\rmd_\Pi$, i.e. for all $P\in\frakX^\bullet(M)$,
		\begin{equation*}
		\frakl_1^G(P)=[\Pi,P]_{\sf SN},
		\end{equation*}
		\item The binary bracket $\frakl_2^G$ acts as follows on homogeneous $P,Q\in\frakX^\bullet(M)$:
		\begin{equation*}
		\frakl_2^G(P,Q)=(-)^{|P|} [P,Q]_\gamma,
		\end{equation*}
	where $[-,-]_\gamma$ is the
	extension to an	almost Gerstenhaber bracket 
	of the bracket in eq. \eqref{eq:lem:Koszul_algebra:almost_Lie_algbd_structure}.
		\item The ternary bracket $\frakl_3^G$ acts as follows on  homogeneous $P,Q,R\in\frakX^\bullet(M)$:
		\begin{equation*}
		\frakl_3^G(P,Q,R)={-}(-1)^{|Q|}\big(P^\sharp\wedge Q^\sharp \wedge R^\sharp\big)\Upsilon^G_{TM},
		\end{equation*}
		{where $\Upsilon^G_{TM}$  is defined in Rem.~\ref{rem:Courant_tensor_twisted_TM}.}
	\end{itemize}
\end{proposition}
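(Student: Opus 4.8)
The plan is to compute the $L_\infty[1]$-algebra attached to the Dirac structure $T^*M$ and the almost Dirac complement $TM$ inside the (twisted) Courant algebroid $(\bbT M,\ldab-,-\rdab,\ldsb-,-\rdsb_G,\rho_G)$ by feeding this data into the general recipe of Lemma~\ref{lem:2.6first}, and then identifying each of the resulting multibrackets $\frakl_k^G$ with the claimed expressions. First I would recall from Appendix~\ref{app:Deformations_Dirac_Structures} exactly what that recipe produces: using the splitting $\bbT M=A\oplus B$ with $A=T^*M$ and $B=TM$, the almost Dirac structure $B$ carries an induced structure of almost Lie algebroid together with a Courant tensor $\Upsilon^G_{TM}\in\Omega^3(M)$, and one dualizes so that $\Omega^\bullet(A)=\frakX^\bullet(M)$. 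The key point is that $A=T^*M$ here is genuinely a Dirac structure (it is Lagrangian and, by Lemma~\ref{lem:Koszul_algebra:Courant_iso}, isomorphic as a Lie algebroid to $\gr\Pi$ via $\calR_\Pi|_{T^*M}$), so its Courant tensor vanishes and the recipe collapses: the only contributions to $\{\frakl_k^G\}$ come from (i) the Lie algebroid differential on $\Omega^\bullet(A)$, (ii) the $B$-module-type bracket between $A$ and $B$, and (iii) the Courant tensor $\Upsilon^G_{TM}$ of the complement $B$. This is precisely why exactly $\frakl_1^G,\frakl_2^G,\frakl_3^G$ survive and all higher brackets vanish.

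Next I would treat the three brackets in turn. For $\frakl_1^G$: the Lie algebroid structure on $A=T^*M$ is, by construction (Lemma~\ref{lem:Koszul_algebra:Courant_iso}), the one associated to the Poisson bivector $\Pi$, whose Chevalley--Eilenberg differential on $\frakX^\bullet(M)=\Gamma(\wedge^\bullet TM)$ is well known to be the Lichnerowicz differential $\rmd_\Pi=[\Pi,-]_{SN}$; this is essentially a citation. For $\frakl_2^G$: the binary bracket in the recipe is built from the bracket of the almost Lie algebroid $B=TM$, which Lemma~\ref{lem:Koszul_algebra:Courant_iso} identifies explicitly as $([-,-]_\gamma,\rho_\gamma)$ via~\eqref{eq:lem:Koszul_algebra:almost_Lie_algbd_structure}; I would then note that the recipe extends this to multivector fields as the associated almost Gerstenhaber bracket, picking up the standard Koszul sign $(-)^{|P|}$ from the degree shift $[2]$, yielding $\frakl_2^G(P,Q)=(-)^{|P|}[P,Q]_\gamma$. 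For $\frakl_3^G$: the ternary bracket in the recipe is precisely the contraction of the Courant tensor of the complementary almost Dirac structure against the three arguments (viewed via $\sharp$ as sections of $A$), giving $\frakl_3^G(P,Q,R)=-(-1)^{|Q|}(P^\sharp\wedge Q^\sharp\wedge R^\sharp)\,\Upsilon^G_{TM}$, with $\Upsilon^G_{TM}$ computed in Rem.~\ref{rem:Courant_tensor_twisted_TM}; again the sign is bookkeeping from the shift conventions.

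The main obstacle I anticipate is not conceptual but a matter of sign/convention reconciliation: the general formulas in Lemma~\ref{lem:2.6first} are stated in one set of grading and bracket conventions, whereas $[-,-]_\gamma$, $d_\Pi$, and $\Upsilon^G_{TM}$ have been normalized separately, so pinning down that the Koszul signs $(-)^{|P|}$ in $\frakl_2^G$ and $-(-1)^{|Q|}$ in $\frakl_3^G$ come out exactly as stated — rather than off by a sign or a reordering — will require careful tracking of the degree shift by $2$ and of how $\sharp$ intertwines $\Omega^\bullet(A)$ with $\frakX^\bullet(M)$. I would handle this by fixing conventions once, checking the lowest-degree cases ($\frakl_2^G$ on a function and a vector field, $\frakl_3^G$ on three vector fields) against the explicit formula~\eqref{eq:lem:Koszul_algebra:almost_Lie_algbd_structure} and the expression in Rem.~\ref{rem:Courant_tensor_twisted_TM}, and then invoking the derivation property to extend to all degrees. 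The vanishing of $\frakl_k^G$ for $k\ge 4$ and for $k=1$ on inputs of degree $\ge 1$ beyond what $d_\Pi$ already covers follows formally from $A$ being Dirac (its Courant tensor is zero), so no further work is needed there.
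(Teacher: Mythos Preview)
Your proposal is correct and matches the paper's approach exactly: the proposition is obtained by a direct application of Lemma~\ref{lem:2.6first} with $A=T^\ast M$ (carrying the Poisson Lie algebroid structure, so $\rmd_A=\rmd_\Pi$) and $B=TM$ (carrying the almost Lie algebroid structure~\eqref{eq:lem:Koszul_algebra:almost_Lie_algbd_structure} and Courant tensor $\Upsilon^G_{TM}$ from Rem.~\ref{rem:Courant_tensor_twisted_TM}), together with the identification $\Omega^\bullet(T^\ast M)=\frakX^\bullet(M)$. Two small inaccuracies in your write-up: the binary bracket comes from the almost Gerstenhaber extension of the bracket on $B$ itself rather than from an ``$A$--$B$ module-type bracket'', and the vanishing of $\frakl_k^G$ for $k\ge 4$ is already part of the general statement of Lemma~\ref{lem:2.6first} (it holds for any Dirac $A$ and almost Dirac complement $B$), not a separate consequence of $\Upsilon_A=0$.
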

By Rem.~\ref{rem:Courant_tensor_twisted_TM}, the ternary bracket $\frakl_3^G$ vanishes exactly when $G$ is involutive, in which case the above $L_{\infty}[1]$-algebra reduces to a dgL[1]a.  
Also note that, as a consequence of Lemma~\ref{lem:Koszul_algebra:Courant_iso}, the $L_\infty[1]$-algebra $(\frakX^\bullet(M)[2],\{\frakl_k^G\})$ is canonically isomorphic to the $L_\infty[1]$-algebra $\big(\Omega^\bullet(\gr\Pi)[2],\big\{\mu^{G\oplus G^{0}}_k\big\}\big)$ defined in terms of the splitting $\bbT M=\gr\Pi\oplus(G\oplus G^{0})$ of the standard Courant algebroid.
	
	\begin{proposition}
		\label{prop:Koszul_algebra:isomorphism}
		The bundle isomorphism~\eqref{eq:lem:Koszul_algebra:Lie_algbd_iso} induces a strict isomorphism of $L_\infty[1]$-algebras
		\begin{equation*}
			\wedge^\bullet(\calR_\Pi|_{T^\ast M})^\ast:\big(\Omega^\bullet(\gr\Pi)[2],\big\{\mu^{G\oplus G^{0}}_k\big\}\big)\overset{\sim}{\longrightarrow}(\frakX^\bullet(M)[2],\{\frakl_k^G\}).
		\end{equation*}
	\end{proposition}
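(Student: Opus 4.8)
The plan is to deduce the statement from the naturality of the Dirac deformation construction of Appendix~\ref{app:Deformations_Dirac_Structures} with respect to Courant algebroid isomorphisms. Recall that the $L_\infty[1]$-algebra $(\Omega^\bullet(A)[2],\{\mu_k^B\})$ attached by Lemma~\ref{lem:2.6first} to a Dirac structure $A$ with complementary almost Dirac structure $B$ in a Courant algebroid $E$ is built entirely out of the triple $(E,A,B)$: the underlying graded space is $\Omega^\bullet(A)=\Gamma(\wedge^\bullet A^*)$, where $A^*\cong B$ via the pairing $\ldab-,-\rdab$; the unary bracket is the Lie algebroid differential $\mathrm{d}_A$ coming from the Lie algebroid structure that $A$ carries as a Dirac structure; the binary bracket is (up to sign) the Gerstenhaber-type bracket induced by the Courant bracket and anchor; and the ternary bracket is built from the Courant tensor $\Upsilon_B$ of $B$ (cf.\ Rem.~\ref{rem:Courant_tensor}). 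Hence, if $\phi\colon E\to E'$ is a Courant algebroid isomorphism with $\phi(A)=A'$ and $\phi(B)=B'$, it intertwines all of this data, and the pullback $\wedge^\bullet(\phi|_A)^*\colon\Omega^\bullet(A')[2]\to\Omega^\bullet(A)[2]$ is a strict isomorphism of $L_\infty[1]$-algebras $(\Omega^\bullet(A')[2],\{\mu_k^{B'}\})\to(\Omega^\bullet(A)[2],\{\mu_k^{B}\})$. (If Lemma~\ref{lem:2.6second} already records this naturality, it can simply be invoked.)

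I would then apply this with $\phi=\calR_\Pi\calR_\gamma$, the Courant algebroid isomorphism of Lemma~\ref{lem:Koszul_algebra:Courant_iso}, from the twisted Courant algebroid $(\bbT M,\ldab-,-\rdab,\ldsb-,-\rdsb_G,\rho_G)$ to the standard one. Taking $A=T^*M$ and $B=TM$ on the source, Lemma~\ref{lem:Koszul_algebra:Courant_iso} gives $\phi(T^*M)=\gr\Pi$ and, via~\eqref{eq:lem:Koszul_algebra:almost_Lie_algbd_iso}, $\phi(TM)=G\oplus G^0$; moreover $\calR_\gamma$ restricts to the identity on $T^*M$, so $\phi|_{T^*M}=\calR_\Pi|_{T^*M}$ is precisely the map~\eqref{eq:lem:Koszul_algebra:Lie_algbd_iso}. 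On the target side, the pair $(\gr\Pi,G\oplus G^0)$ produces the $L_\infty[1]$-algebra $(\Omega^\bullet(\gr\Pi)[2],\{\mu_k^{G\oplus G^0}\})$ by definition. On the source side, the pair $(T^*M,TM)$ produces exactly $(\frakX^\bullet(M)[2],\{\frakl_k^G\})$: this is how $\{\frakl_k^G\}$ was introduced just before Prop.~\ref{prop:Koszul_algebra}, under the identification $\Omega^\bullet(T^*M)=\Gamma(\wedge^\bullet TM)=\frakX^\bullet(M)$. Therefore the naturality statement yields the desired strict $L_\infty[1]$-isomorphism $\wedge^\bullet(\calR_\Pi|_{T^*M})^*\colon(\Omega^\bullet(\gr\Pi)[2],\{\mu_k^{G\oplus G^0}\})\to(\frakX^\bullet(M)[2],\{\frakl_k^G\})$.

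The only point needing care — and the sole content beyond bookkeeping — is to confirm that the construction of Lemma~\ref{lem:2.6first} really is functorial in the asserted sense and that the induced map is literally $\wedge^\bullet(\phi|_A)^*$, not merely isomorphic to it. Concretely, one must verify that a Courant isomorphism preserving the splitting $E=A\oplus B$: (i) restricts to a Lie algebroid isomorphism $A\to A'$, so that $\wedge^\bullet(\phi|_A)^*$ intertwines $\mathrm{d}_{A'}$ with $\mathrm{d}_A$; (ii) identifies the pairings, hence the identifications $B\cong A^*$ and $B'\cong A'^*$ compatibly, so that the binary brackets match; and (iii) pulls back the Courant tensor $\Upsilon_{B'}$ to $\Upsilon_B$, so that the ternary brackets match — the special case $B=TM$, $B'=G\oplus G^0$ of this last point is already the content of Rem.~\ref{rem:Courant_tensor_twisted_TM}. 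Each of these is immediate from the definitions. Alternatively, if one prefers not to invoke functoriality abstractly, the same three checks can be performed directly by matching the explicit formulas of Prop.~\ref{prop:Koszul_algebra} against the analogous formulas for $\{\mu_k^{G\oplus G^0}\}$ obtained from Lemma~\ref{lem:2.6first}, but this is more laborious and less transparent.
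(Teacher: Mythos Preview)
Your proposal is correct and matches the paper's approach: the paper does not give a standalone proof of this proposition, but rather remarks just before stating it that the isomorphism is ``a consequence of Lemma~\ref{lem:Koszul_algebra:Courant_iso}'', i.e.\ precisely the naturality of the construction of Lemma~\ref{lem:2.6first} under the Courant algebroid isomorphism $\calR_\Pi\calR_\gamma$. Your write-up simply unpacks this consequence in detail; the one inessential slip is the parenthetical about Lemma~\ref{lem:2.6second}, which concerns the MC correspondence rather than functoriality and so is not the relevant reference here.
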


	The MC elements $Z$ of $(\frakX^\bullet(M)[2],\{\frakl^G_k\})$
    encode the Dirac structures $L\subset\bbT M$ transverse to $G\oplus G^{0}$ via the relation $L=\calR_\Pi\calR_\gamma\gr(Z)$.
{If $Z\in\Gamma(\calI_{\gamma})$, then} {we can use \eqref{eq:grexp} to express this relation as
 \begin{equation}\label{eq:assignMC}
			L= \gr(\exp_G(Z)).
		\end{equation}
}
Thm.~\ref{theor:nearby_regular_bivectors} shows that, restricting this correspondence to MC elements $Z\in\Gamma(\calI_{\gamma})\cap\frakX_\good^2(M)$, one obtains a parametrization for the space of regular Poisson structures near $\Pi$.
	Hence, if we show that $\frakX_\good^\bullet(M)[2]$ is an $L_{\infty}[1]$-subalgebra of $(\frakX^\bullet(M)[2],\{\frakl_k^G\})$, then we find an $L_\infty[1]$-algebra for which small MC elements parametrize deformations of the symplectic foliation $(\calF,\omega)$.

	\begin{proposition}
		\label{prop:good_L_infty_algebra}
		The multibrackets $\frakl_k^G$ map $\frakX_\good^\bullet(M)[2]\subset\frakX^\bullet(M)[2]$ to itself.
		So $\frakX_\good^\bullet(M)[2]$ inherits an $L_\infty[1]$-algebra structure, still denoted by $\{\frakl_k^G\}$.
	\end{proposition}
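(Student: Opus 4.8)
The plan is to verify that each of the only non-trivial multibrackets $\frakl_1^G,\frakl_2^G,\frakl_3^G$ of Proposition~\ref{prop:Koszul_algebra} maps $\frakX_\good^\bullet(M)$ into itself; the $L_\infty[1]$-relations are then automatically inherited by the subspace $\frakX_\good^\bullet(M)[2]$. For $\frakl_1^G=d_\Pi$ this is exactly the statement, recalled above, that $(\frakX_\good^\bullet(M),d_\Pi)$ is a subcomplex of the Lichnerowicz complex. For $\frakl_2^G$ and $\frakl_3^G$ I will use two descriptions of good multivector fields: the defining one, that $W\in\frakX^k_\good(M)$ iff $\iota_\alpha\iota_\beta W=0$ for all $\alpha,\beta\in\Gamma(T\calF^0)$; and, using the splitting $TM=G\oplus T\calF$, the decomposition
\[
\frakX^k_\good(M)\ =\ \frakX^k(\calF)\ \oplus\ \big(\Gamma(G)\wedge\frakX^{k-1}(\calF)\big),
\]
so that any good multivector field is a finite sum $W=V_0+\sum_j X_j\wedge V_j$ with $V_0,V_j\in\frakX^\bullet(\calF)$ and $X_j\in\Gamma(G)$. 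I will repeatedly use that $\frakX^\bullet_\good(M)$ is a module over $\frakX^\bullet(\calF)$ under $\wedge$, and that $\iota_\alpha W=0$ for all $\alpha\in\Gamma(T\calF^0)$ forces $W\in\frakX^\bullet(\calF)$, i.e.~$W$ has no leg transverse to $\calF$.

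For $\frakl_2^G(P,Q)=(-)^{|P|}[P,Q]_\gamma$ I would first record three elementary properties of the twisted almost Lie algebroid $(TM,[-,-]_\gamma,\rho_\gamma)$, all immediate from~\eqref{eq:lem:Koszul_algebra:almost_Lie_algbd_structure} together with $\rho_\gamma=\pr_G$: $\rho_\gamma$ vanishes on $T\calF$; $[Y,Y']_\gamma=0$ for $Y,Y'\in\Gamma(T\calF)$; and $[X,Y]_\gamma\in\frakX(\calF)$ for all $X\in\frakX(M)$ and $Y\in\Gamma(T\calF)$, since the formula collapses to $-\Pi^\sharp(\calL_{\pr_GX}\iota_Y\gamma)\in\Gamma(\im\Pi^\sharp)$. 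Since $[-,-]_\gamma$ is a graded biderivation for $\wedge$, these propagate to the exterior algebra as $[\frakX^\bullet(\calF),\frakX^\bullet(\calF)]_\gamma=0$ and $[\frakX^\bullet(\calF),\Gamma(G)]_\gamma\subseteq\frakX^\bullet(\calF)$. Writing $P=V_0+\sum_j X_j\wedge V_j$ and $Q=V_0'+\sum_l X_l'\wedge V_l'$ as above and expanding $[P,Q]_\gamma$ with the Leibniz rule, every resulting summand is — after wedging in the surviving $T\calF$-valued factors — built from $[X_j,X_l']_\gamma\in\frakX(M)$, from $[X_j,V_l']_\gamma,[V_j,X_l']_\gamma\in\frakX^\bullet(\calF)$, from $[V_j,V_l']_\gamma=0$, or from an anchor term landing in $C^\infty(M)$; one checks case by case that such a term always has at most one leg transverse to $\calF$. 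Hence $\frakl_2^G(P,Q)\in\frakX^\bullet_\good(M)$.

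For $\frakl_3^G(P,Q,R)={-}(-1)^{|Q|}\big(P^\sharp\wedge Q^\sharp\wedge R^\sharp\big)\Upsilon^G_{TM}$, the key point is a structural property of the Courant tensor of Remark~\ref{rem:Courant_tensor_twisted_TM}: since $\pr_G$ kills $T\calF$ and $\iota_X\gamma=0$ for $X\in\Gamma(G)$, the three-form $\Upsilon^G_{TM}$ vanishes on any triple of vector fields two of which are tangent to $\calF$; equivalently, $\Upsilon^G_{TM}$ is a section of $T^\ast M\wedge\wedge^2 T\calF^0$, that is, locally a finite sum of decomposable three-forms each having at least two of its three one-form factors in $\Gamma(T\calF^0)$. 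As the operation $(P^\sharp\wedge Q^\sharp\wedge R^\sharp)(-)$ is $C^\infty(M)$-linear and local in the form slot, it suffices to treat one such decomposable form; then $\frakl_3^G(P,Q,R)$ is a sum of wedge products of three multivector fields obtained by contracting $P$, $Q$ and $R$ each against one of these one-form factors, at least two of which lie in $\Gamma(T\calF^0)$. Since $P,Q,R$ are good, any single contraction $\iota_{t^0}W$ with $t^0\in\Gamma(T\calF^0)$ satisfies $\iota_{s^0}\iota_{t^0}W=0$ for all $s^0\in\Gamma(T\calF^0)$ and hence lies in $\frakX^\bullet(\calF)$; the remaining contraction produces again a good multivector field. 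A wedge of two elements of $\frakX^\bullet(\calF)$ with one good multivector field is good, so $\frakl_3^G(P,Q,R)\in\frakX^\bullet_\good(M)$.

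The argument involves no genuinely hard step: the whole content is concentrated in the two structural inputs highlighted above — the three properties of $([-,-]_\gamma,\rho_\gamma)$ recorded at the start of the $\frakl_2^G$ analysis, which say that $T\calF$ sits inside the kernel of the anchor and is a (squarewise trivial) ideal, and the fact that $\Upsilon^G_{TM}$ has two legs in $T\calF^0$ — after which everything reduces to tracking transverse degrees through the Leibniz rule and through contractions. The points that demand the most care are the bookkeeping in the combinatorial expansion for $\frakl_2^G$ and pinning down the precise meaning (and signs) of the operator $(P^\sharp\wedge Q^\sharp\wedge R^\sharp)(-)$ acting on three-forms for $\frakl_3^G$.
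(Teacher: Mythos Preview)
The paper does not actually prove this proposition: it is stated in \S\ref{sec:Koszul_algebra} as background recalled from the companion paper \cite{DefSF}, so there is no in-paper proof to compare against directly. That said, your argument is correct. The two structural inputs you isolate are exactly right: for $\frakl_2^G$, that $T\calF$ lies in $\ker\rho_\gamma$ and satisfies $[\Gamma(T\calF),\Gamma(T\calF)]_\gamma=0$, $[\frakX(M),\Gamma(T\calF)]_\gamma\subset\Gamma(T\calF)$; for $\frakl_3^G$, that $\Upsilon^G_{TM}$ vanishes whenever two arguments lie in $T\calF$ (in fact one can check it lies in $\Gamma(T^\ast\calF\wedge\wedge^2 T\calF^0)$, but your weaker statement suffices). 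The case-by-case Leibniz expansion and the contraction bookkeeping then go through as you describe.

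A slightly more streamlined way to package the same argument --- and the one the present paper uses in the closely related Proposition~\ref{prop:subalgebras} --- is to work with the bigrading $\frakX^{(p,q)}(M)=\Gamma(\wedge^p T\calF\otimes\wedge^q G)$ induced by $TM=T\calF\oplus G$, under which $\frakX^\bullet_\good(M)=\bigoplus_{q\le 1}\frakX^{(p,q)}(M)$. One then just records the bidegree behaviour of each multibracket: your observations amount to saying that $\frakl_2^G$ does not raise the $q$-degree and that $\frakl_3^G$ lowers it, from which closure on $q\le 1$ is immediate. This is the same content as your proof, but it replaces the explicit Leibniz expansion by a single bidegree count and avoids having to track individual summands.
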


	\begin{theorem}{\bf (Main theorem of \cite{DefSF})}
	\label{theor:deformation_theory regular_Poisson}
	Let $(\calF,\omega)$ be a rank $2k$ symplectic foliation on $M$, with corresponding regular Poisson structure $\Pi$.
	Fix a distribution $G$ complementary to $T\calF$.
	The relation 
	\begin{equation*}
	{-}\widetilde\omega^{-1}=\exp_G(Z)
	\end{equation*}
	establishes a canonical one-to-one correspondence between
	\begin{enumerate}
		\item MC elements $Z$ of the $L_\infty[1]$-algebra $(\frakX_\good^\bullet(M)[2],\{\frakl^G_k\})$ 
		such that $Z\in \Gamma(\calI_{\gamma})$,
			\item rank $2k$ symplectic foliations $(\widetilde\calF,\widetilde\omega)$ on $M$ 
		such that $T\widetilde\calF\pitchfork G$. 
	\end{enumerate}
\end{theorem}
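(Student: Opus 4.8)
The plan is to deduce the statement by combining three facts already recalled above: the parametrization of nearby rank $2k$ regular bivector fields by the Dirac exponential map (Thm.~\ref{theor:nearby_regular_bivectors}), the description of the Maurer-Cartan (MC) elements of $(\frakX^\bullet(M)[2],\{\frakl_k^G\})$ in terms of Dirac structures, and the classical fact that a bivector field is Poisson precisely when its graph is a Dirac structure. The only genuinely new bookkeeping is the observation that the last two parametrizations --- the one handling the constant-rank condition and the one handling integrability --- are implemented by the \emph{same} map $\exp_G$, which is the content of~\eqref{eq:grexp}.

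Concretely, I would first fix $Z\in\frakX_\good^2(M)\cap\Gamma(\calI_\gamma)$ and establish the chain of equivalences
\begin{align*}
Z\ \text{is MC for}\ \big(\frakX_\good^\bullet(M)[2],\{\frakl_k^G\}\big)
&\iff Z\ \text{is MC for}\ \big(\frakX^\bullet(M)[2],\{\frakl_k^G\}\big) \\
&\iff \gr(\exp_G(Z))\ \text{is a Dirac structure} \\
&\iff \exp_G(Z)\ \text{is a Poisson bivector field}.
\end{align*}
The first equivalence is immediate from Prop.~\ref{prop:good_L_infty_algebra}: the multibrackets of the good $L_\infty[1]$-subalgebra are the restrictions of the ambient ones, so the two MC equations for $Z$ literally coincide. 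The second is the Dirac-theoretic description of MC elements recalled just before Prop.~\ref{prop:good_L_infty_algebra}, namely that $Z$ is MC for $\{\frakl_k^G\}$ iff $\calR_\Pi\calR_\gamma\gr(Z)$ is a Dirac structure in the standard Courant algebroid, together with the fact that for $Z\in\Gamma(\calI_\gamma)$ this Dirac structure equals $\gr(\exp_G(Z))$ by~\eqref{eq:grexp}; the transversality to $G\oplus G^0$ appearing in the general statement (Lemma~\ref{lem:2.6first}) is automatic here, since $\gr(Z)$ is transverse to $TM$ and $\calR_\Pi\calR_\gamma$ carries $TM$ isomorphically onto $G\oplus G^0$ by Lemma~\ref{lem:Koszul_algebra:Courant_iso}. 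The third equivalence is the standard fact recalled above.

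Next I would feed this into Thm.~\ref{theor:nearby_regular_bivectors}, which states that $Z\mapsto\exp_G(Z)$ is a bijection from $\frakX_\good^2(M)\cap\Gamma(\calI_\gamma)$ onto the rank $2k$ regular bivector fields $W$ with $\im W^\sharp\pitchfork G$. Restricting both sides to the sublocus singled out by the equivalent conditions above turns it into a bijection from the MC elements $Z\in\Gamma(\calI_\gamma)$ of $(\frakX_\good^\bullet(M)[2],\{\frakl_k^G\})$ onto the rank $2k$ regular \emph{Poisson} structures $W$ with $\im W^\sharp\pitchfork G$. Finally, composing with the bijection $\RegPoiss^{2k}(M)\cong\SymplFol^{2k}(M)$ given by $W=-\widetilde\omega^{-1}$, under which $\im W^\sharp=T\widetilde\calF$ so that $\im W^\sharp\pitchfork G$ reads $T\widetilde\calF\pitchfork G$, produces precisely the asserted one-to-one correspondence $-\widetilde\omega^{-1}=\exp_G(Z)$. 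Canonicity is then clear, since the auxiliary data $\gamma$, $\pr_{T\calF}$ and $\pr_G$ are all determined by $(\calF,\omega)$ and the fixed complement $G$, and the correspondence is given by a closed formula.

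I do not expect a genuine obstacle here, as everything substantive --- the linearization of the constant-rank condition in Thm.~\ref{theor:nearby_regular_bivectors}, the construction of the $L_\infty[1]$-algebra $\{\frakl_k^G\}$, and its identification with a deformation complex of the Dirac structure $\gr\Pi$ --- is supplied by the recalled results. The point that deserves the most care is the compatibility just invoked, namely that the $\exp_G$ of Thm.~\ref{theor:nearby_regular_bivectors} is precisely the map realizing the Dirac deformations encoded by the MC equation; once this is in place, the constant-rank and the Poisson conditions can be imposed separately and then intersected, which is what makes the two-step argument go through.
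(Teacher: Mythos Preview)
Your proposal is correct and follows exactly the approach the paper sketches in the paragraphs leading up to the statement of Thm.~\ref{theor:deformation_theory regular_Poisson}: combine the MC/Dirac correspondence for $(\frakX^\bullet(M)[2],\{\frakl_k^G\})$ via \eqref{eq:assignMC}, the fact that $\frakX_\good^\bullet(M)[2]$ is an $L_\infty[1]$-subalgebra (Prop.~\ref{prop:good_L_infty_algebra}), and the bijection of Thm.~\ref{theor:nearby_regular_bivectors}, then translate regular Poisson structures into symplectic foliations. Note that the paper does not give a self-contained proof here, as the result is quoted from \cite{DefSF}; your write-up is essentially a clean reconstruction of that argument from the ingredients recalled in \S\ref{sec:one}.
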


\section{Change of Splitting}\label{sec:two}	

The construction of the $L_{\infty}[1]$-algebra $(\frakX_{\calF}^\bullet(M)[2],\{\frakl^G_k\})$ of a symplectic foliation $(\calF,\omega)$ involves a choice of distribution $G$ complementary to $T\calF$. In this section, we show that different complements $G_0,G_1$ produce $L_{\infty}[1]$-algebras that are canonically isomorphic, see Cor.~\ref{cor:preservessubalg}.
 We proceed in two steps.

First, recall {from \S \ref{sec:Koszul_algebra}}  that the $L_{\infty}[1]$-algebras 
$(\frakX^\bullet(M)[2],\{\frakl^{G_0}_k\})$ and $(\frakX^\bullet(M)[2],\{\frakl^{G_1}_k\})$ are essentially obtained by deforming the Dirac structure $\gr\Pi$, using $G_0\oplus G_0^{0}$ and $G_1\oplus G_1^{0}$ as almost Dirac complements in Lemma \ref{lem:2.6first}.  It was shown in \cite{GMS} that different choices of almost Dirac complement  yield isomorphic $L_{\infty}[1]$-algebras. Another proof of this fact appeared in \cite{Jacobi}, in the more general setting of Dirac-Jacobi structures. We specialize the latter proof to the Dirac setting, thus obtaining an explicit $L_{\infty}[1]$-isomorphism between 
$(\frakX^\bullet(M)[2],\{\frakl^{G_0}_k\})$ and $(\frakX^\bullet(M)[2],\{\frakl^{G_1}_k\})$.
Second, we check that this isomorphism restricts to an isomorphism between the $L_{\infty}[1]$-subalgebras  $(\frakX_{\calF}^\bullet(M)[2],\{\frakl^{G_0}_k\})$ and $(\frakX_{\calF}^\bullet(M)[2],\{\frakl^{G_1}_k\})$.

\subsection{The Dirac case}	\label{subsec:Diraccase}

{Here we carry out the first step above for arbitrary Dirac structures.}
Given a Dirac structure $A$ in a Courant algebroid $E$, a choice of complementary almost Dirac structure $B$ yields an $L_{\infty}[1]$-algebra $(\Omega^{\bullet}(A)[2],\{\mu_{k}^{B}\})$ (see Lemma \ref{lem:2.6first}). {In the first part of this subsection} we recall how one proves in \cite{Jacobi} that this $L_{\infty}[1]$-algebra is independent of the choice of complement $B$. The proof takes advantage of the fact that the $L_{\infty}[1]$-algebra in question arises from Voronov's derived bracket construction \cite{voronov2005higher}, as explained in Appendix~\ref{app:Deformations_Dirac_Structures}. This allows one to use a result from \cite{CS} on equivalences of higher derived brackets. For more details, we refer to \cite[\S 5.2]{Jacobi}.

The setup is as follows. Let $B_0,B_1$ be almost Dirac complements to the Dirac structure $A\subset E$. 
As described in Appendix~\ref{app:Deformations_Dirac_Structures}, via $E=A\oplus B_i\cong A\oplus A^*$, one obtains Maurer-Cartan elements $\Theta_0,\Theta_1$ of the graded Lie algebra $(C^\infty(T^\ast[2]A[1])[2],\{-,-\})$, where $\{-,-\}$ denotes the canonical degree $-2$ Poisson structure on {the graded manifold} $T^\ast[2]A[1]$. These give rise to two sets of V-data, for $i=0,1$:
\begin{equation}\label{eq:Vdata}
\big(C^\infty(T^\ast[2]A[1])[2],\Omega^\bullet(A)[2],P,\Theta_i\big), 
\end{equation}
where $P:C^\infty(T^\ast[2]A[1])[2]\rightarrow\Omega^\bullet(A)[2]$ is the restriction to the zero section. Applying Voronov's construction, we obtain two codifferentials $\calQ_0,\calQ_1$ on the symmetric coalgebra ${\sf S}(\Omega^\bullet(A)[2])$. Upon changing the sign of the resulting binary brackets (c.f. Rem.~\ref{rem:decalage}), we end up with two $L_{\infty}[1]$-algebras $(\Omega^{\bullet}(A)[2],\{\mu_{k}^{0}\})$ and $(\Omega^{\bullet}(A)[2],\{\mu_{k}^{1}\})$, as described in Lemma \ref{lem:2.6first}.

		In order to relate these $L_{\infty}[1]$-algebras, we start by expressing the almost Dirac structure $B_1$ in the direct sum $E=A\oplus B_0$. As $B_1$ is transverse to the first summand $A$, it can be realized as the graph of a vector bundle map $B_0\cong A^*\to A$, hence	 there exists $\eta\in\Gamma(\wedge^2A)$ such that
		\begin{equation}\label{eq:eta}
			B_1=\gr(\eta)\equiv\{\iota_\alpha\eta+\alpha\mid\alpha\in A^\ast\}\subset A\oplus A^\ast\simeq A\oplus B_0=E.
		\end{equation}
		Since $\eta$ is a degree $2$ function in $C^\infty(T^\ast[2]A[1])$, it determines a degree zero graded Lie algebra derivation of $(C^\infty(T^\ast[2]A[1])[2],\{-,-\})$, namely
		\[
		{\sf m}:=\{\eta,-\}:C^\infty(T^\ast[2]A[1])[2]\to C^\infty(T^\ast[2]A[1])[2].
		\]
		To apply the equivalence result from \cite{CS}, one checks that the following technical assumptions are met:
		\begin{enumerate}
		    \item The equality $P\circ{\sf m}\circ P=P\circ {\sf m}$ holds.
		    \item The zero solution $\lambda_\epsilon=0$ is the only solution to the Cauchy problem
		    \[
		    \begin{cases}
		    \frac{\rmd}{\rmd\varepsilon}\lambda_\varepsilon=P{\sf m}\lambda_\varepsilon\\ 
		    \lambda_0=0
		    \end{cases}.
		    \]
		\end{enumerate}
		In fact, $P\circ {\sf m}=0$, {as we show in the proof of Lemma \ref{lem:M2} below,} so these requirements are trivially satisfied. Since the derivation ${\sf m}$ is pronilpotent, it has a well-defined flow, given by the 1-parameter group of graded Lie algebra automorphisms $e^{\eps{\sf m}}$.

		Applying the flow $e^{\eps{\sf m}}$ to the Maurer-Cartan element $\Theta_0$ of  $(C^\infty(T^\ast[2]A[1])[2],\{-,-\})$ yields a family of Maurer-Cartan elements
		$
		\Theta(\eps):=e^{\eps{\sf m}}\Theta_0
		$
		that stays inside $\ker P$, so we obtain a family of V-data 
		\[
		\big(C^\infty(T^\ast[2]A[1])[2],\Omega^\bullet(A)[2],P,\Theta(\eps)\big).
		\]
		One checks that this family interpolates between the V-data $\big(C^\infty(T^\ast[2]A[1])[2],\Omega^\bullet(A)[2],P,\Theta_0\big)$ and $\big(C^\infty(T^\ast[2]A[1])[2],\Omega^\bullet(A)[2],P,\Theta_1\big)$ that we started with (c.f. \eqref{eq:Vdata}). Consequently, Voronov's method gives rise to a family of codifferentials $\calQ(\eps)$ on ${\sf S}(\Omega^\bullet(A)[2])$ interpolating between $\calQ_0$ and $\calQ_1$.

		Following further the strategy from \cite{CS}, we define ${\sf M}$ to be the degree $0$ coderivation of ${\sf S}(\Omega^\bullet(A)[2])$ with Taylor coefficients $\{{\sf M}_k\}$ given by
		\begin{equation}
			\label{eq:M_k}
			{\sf M}_k(\omega_1\odot\ldots\odot\omega_k)=P\{\{\ldots\{{\sf m}\omega_1,\omega_2\},\ldots\},\omega_k\}
		\end{equation}
		for all $\omega_1,\dots,\omega_k\in\Omega^\bullet(A)[2]$.
We recall that by definition, the $k$-th Taylor coefficient of ${\sf M}$ is given by  $M_k=pr_{\Omega^\bullet(A)[2]}\circ M|_{S^k(\Omega^\bullet(A)[2])}$, see Prop.~\ref{prop:coalgebra_coderivation}.		
		 By \cite[Prop.~3.4]{CS}, the coderivation ${\sf M}$ exponentiates to a $1$-parameter group of graded coalgebra automorphisms $e^{\eps{\sf M}}$ of ${\sf S}(\Omega^\bullet(A)[2])$.  Moreover, by \cite[Thm.~3.2]{CS}, the coalgebra automorphism $e^{\eps{\sf M}}$ intertwines the codifferentials $\calQ_0$ and $\calQ(\eps)$. At time $\eps=1$, we obtain the following result, which recovers in the case of exact Courant algebroids  \cite[Cor.~3.9]{GMS}.

		\begin{theorem}
			\label{theor:GMS}
			The coalgebra automorphism $e^{\sf M}$ corresponds to an $L_\infty[1]$-algebra isomorphism
			\begin{equation*}
				e^{\sf M}:(\Omega^{\bullet}(A)[2],\mu_{1}^{0},-\mu_{2}^{0},\mu_{3}^{0})\longrightarrow(\Omega^{\bullet}(A)[2],\mu_{1}^{1},-\mu_{2}^{1},\mu_{3}^{1}).
			\end{equation*}
Precomposing and postcomposing with the transposition
\[
\tau:\Omega^{\bullet}(A)[2]\rightarrow\Omega^{\bullet}(A)[2]:\alpha\mapsto -\alpha,
\]
we obtain a canonical $L_{\infty}[1]$-isomorphism
\[
\tau\circ e^{\sf M}\circ\tau:(\Omega^{\bullet}(A)[2],\mu_{1}^{0},\mu_{2}^{0},\mu_{3}^{0})\longrightarrow(\Omega^{\bullet}(A)[2],\mu_{1}^{1},\mu_{2}^{1},\mu_{3}^{1}).
\]
\end{theorem}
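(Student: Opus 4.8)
\emph{Proof plan.} The plan is to read the theorem off the results of \cite{CS} recalled above, together with the sign bookkeeping of the d\'ecalage isomorphism; there is essentially no new computation to do here, as all the substance has already been put in place in the preceding discussion.

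First I would invoke \cite[Thm.~3.2]{CS}. Its hypotheses have all been verified above: the technical assumptions (1) and (2) hold trivially because $P\circ{\sf m}=0$ (established in the proof of Lemma~\ref{lem:M2}); the derivation ${\sf m}$ is pronilpotent, so by \cite[Prop.~3.4]{CS} it has a well-defined flow $e^{\eps{\sf m}}$ and ${\sf M}$ exponentiates to a one-parameter group $e^{\eps{\sf M}}$ of graded coalgebra automorphisms of ${\sf S}(\Omega^\bullet(A)[2])$; and the family $\Theta(\eps)=e^{\eps{\sf m}}\Theta_0$ of Maurer--Cartan elements stays in $\ker P$ and interpolates between $\Theta_0=\Theta(0)$ and $\Theta_1=\Theta(1)$, so that $\calQ(\eps)$ interpolates between $\calQ_0$ and $\calQ_1=\calQ(1)$. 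Therefore $e^{\eps{\sf M}}$ intertwines $\calQ_0$ with $\calQ(\eps)$; specializing to $\eps=1$, the automorphism $e^{\sf M}$ is a graded coalgebra isomorphism from $({\sf S}(\Omega^\bullet(A)[2]),\calQ_0)$ to $({\sf S}(\Omega^\bullet(A)[2]),\calQ_1)$, invertible because it is an automorphism.

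Next I would translate this into the language of $L_\infty[1]$-algebras. By the d\'ecalage convention of Rem.~\ref{rem:decalage}, the codifferential $\calQ_i$ corresponds to the $L_\infty[1]$-algebra on $\Omega^\bullet(A)[2]$ whose only nonzero brackets are $\mu_1^i,\ -\mu_2^i,\ \mu_3^i$ --- the sign on the binary bracket being precisely the one that was introduced above to pass from Voronov's derived brackets to $\{\mu_k^i\}$. Hence the coalgebra isomorphism $e^{\sf M}$ is exactly an $L_\infty[1]$-isomorphism $(\Omega^{\bullet}(A)[2],\mu_{1}^{0},-\mu_{2}^{0},\mu_{3}^{0})\to(\Omega^{\bullet}(A)[2],\mu_{1}^{1},-\mu_{2}^{1},\mu_{3}^{1})$, which is the first displayed assertion of the theorem. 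To remove the sign twist, I would observe that the transposition $\tau$, being multiplication by $-1$ on $\Omega^\bullet(A)[2]$, induces on ${\sf S}(\Omega^\bullet(A)[2])$ the coalgebra automorphism acting by $(-1)^k$ on $k$-fold products; comparing projections to cogenerators shows that for any $L_\infty[1]$-structure $\{l_k\}$ the map $\tau$ is an $L_\infty[1]$-isomorphism $(\Omega^\bullet(A)[2],\{l_k\})\to(\Omega^\bullet(A)[2],\{(-1)^{k+1}l_k\})$, and $\tau=\tau^{-1}$. Using this with $l_k=(\mu_1^0,\mu_2^0,\mu_3^0)$ on the source side and with $l_k=(\mu_1^1,-\mu_2^1,\mu_3^1)$ on the target side (whose sign-twisted version is $(\mu_1^1,\mu_2^1,\mu_3^1)$), the composite $\tau\circ e^{\sf M}\circ\tau$ is an $L_\infty[1]$-isomorphism $(\Omega^{\bullet}(A)[2],\mu_{1}^{0},\mu_{2}^{0},\mu_{3}^{0})\to(\Omega^{\bullet}(A)[2],\mu_{1}^{1},\mu_{2}^{1},\mu_{3}^{1})$, as claimed.

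\emph{Main obstacle.} The theorem itself is a bookkeeping statement, so the real work lies outside its proof: verifying the hypotheses of \cite[Thm.~3.2]{CS}, in particular the identity $P\circ{\sf m}=0$ (Lemma~\ref{lem:M2}) and the interpolation property that $e^{\eps{\sf m}}$ carries $\Theta_0$ to $\Theta_1$ at $\eps=1$ --- which amounts to recognizing $e^{\sf m}=\{\eta,-\}$-conjugation as the effect on the structure function of changing the splitting $E=A\oplus B_0$ to $E=A\oplus B_1=A\oplus\gr(\eta)$. The only delicate point inside the proof proper is the d\'ecalage/transposition step: one must apply the exponent $(-1)^{k+1}$ consistently to source and target so that the two sign twists on the binary brackets cancel correctly.
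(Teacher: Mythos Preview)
Your proposal is correct and mirrors the paper's own argument: the paper does not give a separate proof for this theorem, but the discussion immediately preceding it verifies exactly the hypotheses you list (pronilpotency of ${\sf m}$, $P\circ{\sf m}=0$, the interpolation $\Theta(\eps)=e^{\eps{\sf m}}\Theta_0$ landing on $\Theta_1$ at $\eps=1$) and then invokes \cite[Prop.~3.4, Thm.~3.2]{CS} in the same way. The transposition step is likewise handled in the paper via Rem.~\ref{rem:decalage}, which records precisely that $\tau$ is a strict $L_\infty[1]$-isomorphism flipping the sign of the binary bracket.
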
	
	
We now give an explicit description of the coderivation ${\sf M}$.
We include a detailed proof because the result will be important for Prop.~\ref{prop:subalgebras} later, and it does not appear in \cite{Jacobi} or \cite{GMS} in this generality.

\begin{lemma}\label{lem:M2}
Let $A\rightarrow M$ be any vector bundle. Consider $T^{*}[2]A[1]$ with its standard {degree $-2$} Poisson structure $\{-,-\}$, and denote by $P:C^{\infty}(T^{*}[2]A[1])\rightarrow\Omega^{\bullet}(A)$ the restriction to the zero section. Let $\eta\in\Gamma(\wedge^{2}A)$ be arbitrary and put ${\sf m}:=\{\eta,-\}$. We define $\sf M$ to be the coderivation of ${\sf S}(\Omega^\bullet(A))$ with Taylor coefficients
\[
\sf M_k(\omega_1\odot\cdots\odot\omega_k)=P\{\{\ldots\{{\sf m}\omega_1,\omega_2\},\ldots\},\omega_k\}.
\]
Then all coefficients $\sf M_k$ are zero except for $\sf M_2$, whose action on homogeneous $\omega_1,\omega_2\in\Omega^{\bullet}(A)$ reads
\begin{equation}\label{eq:M2bis}
{\sf M}_2(\omega_1\odot\omega_2)=(-1)^{|\omega_1|}(\omega_{1}^{\flat}\wedge\omega_2^{\flat})\eta.
\end{equation}
\end{lemma}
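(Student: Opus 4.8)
The plan is to compute everything in local coordinates on the graded symplectic manifold $T^{*}[2]A[1]$. Pick a local frame of $A$, giving base coordinates $x^{i}$ of degree $0$ and fiber coordinates $\xi^{a}$ of degree $1$ on $A[1]$; the cotangent directions then contribute momenta $p_{i}$ of degree $2$ conjugate to $x^{i}$ and $\theta_{a}$ of degree $1$ conjugate to $\xi^{a}$, the only nonzero brackets being $\{p_{i},x^{j}\}=\delta^{j}_{i}$ and $\{\theta_{a},\xi^{b}\}=\delta^{b}_{a}$. In this description $\Omega^{\bullet}(A)$ is the subalgebra of functions depending only on $x$ and $\xi$, the restriction $P$ to the zero section is the projection setting $p=\theta=0$, and $\eta=\tfrac12\eta^{ab}(x)\theta_{a}\theta_{b}$ corresponds to our given element of $\Gamma(\wedge^{2}A)$. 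Write $\deg_{\mathrm{mom}}$ for the polynomial degree of a function in the momenta $p,\theta$, so that $\Omega^{\bullet}(A)=\{f:\deg_{\mathrm{mom}}f=0\}$. The crucial bookkeeping fact is that $\{-,-\}$ lowers $\deg_{\mathrm{mom}}$ by $1$ (each of its four terms differentiates one factor once in a momentum direction), while $\eta$ has $\deg_{\mathrm{mom}}=2$, so ${\sf m}=\{\eta,-\}$ raises $\deg_{\mathrm{mom}}$ by $1$.

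Granting this, the vanishing of all $\sf M_{k}$ with $k\neq 2$ is immediate. For $\omega\in\Omega^{\bullet}(A)$ we get $\deg_{\mathrm{mom}}({\sf m}\omega)=1$, hence $P({\sf m}\omega)=0$; this is $\sf M_{1}=0$, i.e.\ the identity $P\circ{\sf m}=0$ invoked earlier. For $k\ge 3$, the bracket $\{{\sf m}\omega_{1},\omega_{2}\}$ has $\deg_{\mathrm{mom}}=0$, hence lies in $\Omega^{\bullet}(A)$; bracketing it further with $\omega_{3}\in\Omega^{\bullet}(A)$ gives something of $\deg_{\mathrm{mom}}=-1$, hence zero. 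So the iterated bracket $\{\{\ldots\{{\sf m}\omega_{1},\omega_{2}\},\ldots\},\omega_{k}\}$ already vanishes after the second step, whence $\sf M_{k}=0$.

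It remains to compute $\sf M_{2}$. Since $\{{\sf m}\omega_{1},\omega_{2}\}$ has $\deg_{\mathrm{mom}}=0$ it lies in $\Omega^{\bullet}(A)$, on which $P$ acts as the identity, so $\sf M_{2}(\omega_{1}\odot\omega_{2})=\{{\sf m}\omega_{1},\omega_{2}\}$. A short application of the graded Leibniz rule, using the antisymmetry of $\eta^{ab}$ to absorb the factor $\tfrac12$, gives ${\sf m}\omega_{1}=\pm\,\eta^{ab}\theta_{a}\,\partial_{\xi^{b}}\omega_{1}$, where $\partial_{\xi^{b}}\omega_{1}$ is the contraction $\iota_{e_{b}}\omega_{1}$ with the frame $e_{b}$ of $A$ dual to $\xi^{b}$. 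Bracketing with $\omega_{2}$, the only surviving term is the pairing of the single $\theta_{a}$ with the $\xi$'s of $\omega_{2}$ (there are no $p$'s, and the remaining $\Omega^{\bullet}(A)$-valued factor of ${\sf m}\omega_{1}$ brackets trivially against $\omega_{2}$), producing $\pm\,\eta^{ab}\,(\iota_{e_{a}}\omega_{1})\wedge(\iota_{e_{b}}\omega_{2})=\pm\,(\omega_{1}^{\flat}\wedge\omega_{2}^{\flat})\eta$. Tracking the Koszul signs then pins the overall sign down to $(-1)^{|\omega_{1}|}$, which is \eqref{eq:M2bis}.

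The only genuine obstacle is this last sign computation. Since $\theta_{a}$ is odd while $\eta$ and the coefficients of $\omega_{i}$ are bosonic, commuting the $\theta$-linear factor of ${\sf m}\omega_{1}$ past $\omega_{2}$ in the graded Leibniz rule — together with the sign conventions carried by the degree $-2$ bracket — introduces Koszul signs depending on $|\omega_{1}|$ and $|\omega_{2}|$, and one must also be careful with the left/right interior-product convention in identifying $\theta$-linear functions with elements of $\Gamma(A\otimes\wedge^{\bullet}A^{*})$. Everything else is forced by the $\deg_{\mathrm{mom}}$-grading, which is precisely what collapses the coderivation $\sf M$ onto its quadratic Taylor coefficient.
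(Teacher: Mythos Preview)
Your argument is correct and follows essentially the same route as the paper. Your $\deg_{\mathrm{mom}}$ is precisely the first component of the bi-grading the paper introduces (degree in the coordinates $(p_i,\theta_a)$), and the vanishing of ${\sf M}_k$ for $k\neq 2$ is obtained by the same degree count.

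The one place the two arguments diverge is in deriving the formula for ${\sf M}_2$. You work in coordinates and leave the final Koszul sign as an exercise, acknowledging it as ``the only genuine obstacle.'' The paper instead reduces by linearity to decomposable $\eta=\eta_1\wedge\eta_2$ with $\eta_i\in\Gamma(A)$, first establishes the clean identity $\{X,\alpha\}=\iota_X\alpha$ for $X\in\Gamma(A)$ and $\alpha\in\Omega^\bullet(A)$ by induction on form degree, and then expands $\{\{\eta_1\wedge\eta_2,\omega_1\},\omega_2\}$ directly. This has the advantage of making the sign tracking completely explicit and convention-independent, so if you want to actually \emph{pin down} the $(-1)^{|\omega_1|}$ rather than assert it, the decomposable route is the cleaner way to do it. As written, your proof sketch is complete up to that deferred sign verification.
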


\begin{remark}\label{rem:rewrite}
One can check that the formula \eqref{eq:M2bis} specializes as follows on two-forms:
\[
({\sf M}_2(\omega_1\odot\omega_2))^{\flat}=-\omega_1^{\flat}\eta^{\sharp}\omega_2^{\flat}-\omega_2^{\flat}\eta^{\sharp}\omega_1^{\flat}\hspace{1cm}\text{for} \ \omega_1,\omega_2\in\Omega^{2}(A).
\]
This equality is the content of \cite[Lemma~4.1]{GMS}, which the authors use to describe the action of the $L_\infty[1]$-isomorphism $e^{-{\sf M}}$ on Maurer-Cartan elements. 
 Our Lemma~\ref{lem:M2} is more general since it applies to arbitrary  homogeneous elements of $\Omega^{\bullet}(A)$, and it is a key step in the proof of Prop.~\ref{prop:subalgebras} below.
\end{remark}

Before proving the lemma, recall that there is a bi-grading on $C^\infty(T^\ast[2]A[1])$, as we now explain. Take coordinates $\{x_i\}$ on $M$ and let $\{\xi_a\}$ be fiberwise linear coordinates on $A$. Denote by $\{p_i\}$ and $\{\theta_a\}$ the associated momenta; these are fiberwise linear coordinates on $T^{*}M$ and $A^{*}$, respectively. We get a coordinate chart $\{x_i,\xi_a,p_i,\theta_a\}$ for $T^\ast[2]A[1]$ where
\begin{equation}\label{grading}
\text{deg}(x_i)=0,\hspace{0.5cm}\text{deg}(\xi_a)=1,\hspace{0.5cm}\text{deg}(p_i)=2,\hspace{0.5cm}\text{deg}(\theta_a)=1.
\end{equation}
In fact, this grading on $C^{\infty}(T^\ast[2]A[1])$ is the sum of two gradings, stemming from the fact that $T^{*}A$ is a double vector bundle. {This in turn uses a canonical symplectomorphism $T^{*}A\rightarrow T^{*}A^{*}$ known as the Legendre transformation \cite[\S 3.4]{Roytenberg1999PhDthesis}; in the above coordinates it is simply given by 
\[
T^{*}A\rightarrow T^{*}A^{*}:(x_i,\xi_a,p_i,\theta_a)\mapsto (x_i,-\theta_a,p_i,\xi_a).
\]}
The double vector bundle in question is then the following:
\[
\begin{tikzcd}[row sep=large]
T^{*}A\arrow{r}{(p_i,\xi_a)}\arrow{d}[swap]{(p_i,\theta_a)} & A^{*}\arrow[d]\\
A\arrow[r]&M
\end{tikzcd},
\]
where $(p_i,\xi_a)$ and the fiber coordinates of $T^{*}A\rightarrow A^{*}$ and $(p_i,\theta_a)$ are those of $T^{*}A\rightarrow A$. When considering a vector bundle as a graded manifold, the fiber coordinates have degree one; hence the total degree on $T^{*}A$ is indeed given by \eqref{grading} above. {In what follows, we denote the bi-degree of an element in $C^{\infty}(T^\ast[2]A[1])$ by $(\bullet,\bullet)$, where the first entry is the degree in the coordinates $(p_i,\theta_a)$ and the second entry is the degree in $(p_i,\xi_a)$.}

\begin{proof}[Proof of Lemma~\ref{lem:M2}]
We first argue that $\sf M_2$ is the only nonzero Taylor coefficient.
\begin{itemize}
    \item To see that $\sf M_1$ vanishes, we should check that $P\circ {\sf m}=0$. Note that for all $f\in C^\infty(T^\ast[2]A[1])$, we have that $P(f)$ vanishes exactly when $f$ has no components of bi-degree $(0,\bullet)$. {Recall that the canonical Poisson bracket $\{-,-\}$ has bi-degree $(-1,-1)$ (see \cite[Rem.~3.3.3]{Roytenberg1999PhDthesis}).} 
 Since $\eta\in\Gamma(\wedge^{2}A)$, it has bi-degree $(2,0)$,
	hence $\sf m=\{\eta,-\}$ has bi-degree $(1,-1)$. Consequently, functions in the image of $\sf m$ have no components of bi-degree $(0,\bullet)$, and therefore $P\circ {\sf m}=0$. 
    
    \item The vanishing of ${\sf M}_k$ for $k\geq 3$ holds by bi-degree reasons. Forms $\omega_1\in\Omega^{k}(A), \omega_2\in\Omega^{l}(A)$ and $\omega_3\in\Omega^{p}(A)$ have bi-degrees $(0,k), (0,l)$ and $(0,p)$ respectively when considered as elements of $C^\infty(T^\ast[2]A[1])$. Since the derivation ${\sf m}$ has bi-degree $(1,-1)$ and the Poisson bracket $\{-,-\}$ has bi-degree $(-1,-1)$, it follows that 
	$
    \{{\sf m}\omega_1,\omega_2\}
    $
	has bi-degree $(0,k+l-2)$. Hence, $\{\{{\sf m}\omega_1,\omega_2\},\omega_3\}$ is zero because it has a negative first degree.
\end{itemize}
To prove the formula \eqref{eq:M2bis}, 
we first show by induction that
\begin{equation}\label{eq:contraction}
\{X,\alpha\}=\iota_{X}\alpha\hspace{1cm}\text{for}\ X\in\Gamma(A),\ \alpha\in\Omega^{\bullet}(A).
\end{equation}
It clearly holds for $\alpha\in\Omega^{1}(A)$, since then $\{X,\alpha\}=\ldab X,\alpha\rdab=\iota_X\alpha$, {by the first equality in \eqref{eq:Roytenberg-correspondence}.} Assuming that the formula holds for $k$-forms, let $\alpha\in\Omega^{k}(A)$ and $\beta\in\Omega^{1}(A)$. We then have
\begin{align*}
\{X,\alpha\wedge\beta\}&=\{X,\alpha\}\beta+(-1)^{k}\alpha\{X,\beta\}\\
&=(\iota_X\alpha)\beta+(-1)^{k}\alpha(\iota_X\beta)\\
&=\iota_X(\alpha\wedge\beta),
\end{align*}
{proving \eqref{eq:contraction}.}
Now, to show that the equality \eqref{eq:M2bis} holds, we can assume that $\eta$ is decomposable, i.e. 
$\eta=\eta_1\wedge\eta_2$ for $\eta_1,\eta_2\in\Gamma(A)$. Note that for bi-degree reasons, we have
\[
{\sf M}_2(\omega_1\odot\omega_2)=P\{\{\eta,\omega_1\},\omega_2\}=\{\{\eta,\omega_1\},\omega_2\}.
\]
Since
\begin{align*}
\{\eta_1\wedge\eta_2,\omega_1\}&=-\big(\{\omega_1,\eta_1\}\eta_2+(-1)^{|\omega_1|}\eta_1\{\omega_1,\eta_2\}\big)\\
&=(-1)^{|\omega_1|}\{\eta_1,\omega_1\}\eta_2+\eta_1\{\eta_2,\omega_1\}\\
&=(-1)^{|\omega_1|}(\iota_{\eta_1}\omega_1)\eta_2+\eta_1(\iota_{\eta_2}\omega_1),
\end{align*}
taking a further bracket with $\omega_2$ we obtain
\begin{align}\label{eq:M2}
\{\{\eta_1\wedge\eta_2,\omega_1\},\omega_2\}&=(-1)^{|\omega_1|}\{(\iota_{\eta_1}\omega_1)\eta_2,\omega_2\}+\{\eta_1(\iota_{\eta_2}\omega_1),\omega_2\}\nonumber\\
&=-(-1)^{|\omega_1|+|\omega_1||\omega_2|}\big(\{\omega_2,\iota_{\eta_1}\omega_1\}\eta_2+(-1)^{|\omega_2|(|\omega_1|-1)}\iota_{\eta_1}\omega_1\{\omega_2,\eta_2\}\big)\nonumber\\
&\hspace{0.5cm}-(-1)^{|\omega_1||\omega_2|}\big(\{\omega_2,\eta_1\}\iota_{\eta_2}\omega_1+(-1)^{|\omega_2|}\eta_1\{\omega_2,\iota_{\eta_2}\omega_1\}\big)\nonumber\\
&=-(-1)^{|\omega_1|+|\omega_2|}\iota_{\eta_1}\omega_1\{\omega_2,\eta_2\}-(-1)^{|\omega_1||\omega_2|}\{\omega_2,\eta_1\}\iota_{\eta_2}\omega_1,
\end{align}
using that $\{-,-\}$ vanishes when both arguments lie in $\Gamma(A^{*})$. By \eqref{eq:contraction}, the expression \eqref{eq:M2} becomes
\begin{align}\label{eq:withsign}
(-1)^{|\omega_1|}\iota_{\eta_1}\omega_1\iota_{\eta_2}\omega_2+(-1)^{|\omega_1||\omega_2|+|\omega_2|}\iota_{\eta_1}\omega_2\iota_{\eta_2}\omega_1
&=(-1)^{|\omega_1|}\iota_{\eta_1}\omega_1\iota_{\eta_2}\omega_2-(-1)^{|\omega_1|}\iota_{\eta_2}\omega_1\iota_{\eta_1}\omega_2\nonumber\\
&=(-1)^{|\omega_1|}\big(\iota_{\eta_1}\omega_1\iota_{\eta_2}\omega_2-\iota_{\eta_2}\omega_1\iota_{\eta_1}\omega_2\big).
\end{align}
On the other hand, we have by definition (see \eqref{eq:sharp:property1}) that
\[
(\omega_{1}^{\flat}\wedge\omega_2^{\flat})\eta_1\wedge\eta_2=\iota_{\eta_1}\omega_1\iota_{\eta_2}\omega_2-\iota_{\eta_2}\omega_1\iota_{\eta_1}\omega_2,
\]
so comparing with \eqref{eq:withsign}, we see that the equality \eqref{eq:M2bis} holds. This proves the lemma. 
\end{proof}

\subsection{The Regular Poisson Case}
\label{sec:splitting_change}
We now show that the $L_{\infty}[1]$-algebra $(\frakX_{\calF}^\bullet(M)[2],\{\frakl^G_k\})$ of a symplectic foliation $(\calF,\omega)$ is independent of the choice of distribution $G$ complementary to $T\calF$. We will proceed as follows. Given two complements $G_0$ and $G_1$, we first use   \S\ref{subsec:Diraccase}  to find an isomorphism between the $L_{\infty}[1]$-algebras $(\frakX^\bullet(M)[2],\{\frakl^{G_0}_k\})$ and $(\frakX^\bullet(M)[2],\{\frakl^{G_1}_k\})$. We then show that this map restricts to the $L_{\infty}[1]$-subalgebras $(\frakX_{\calF}^\bullet(M)[2],\{\frakl^{G_0}_k\})$ and $(\frakX_{\calF}^\bullet(M)[2],\{\frakl^{G_1}_k\})$.

\subsubsection{A canonical $L_{\infty}[1]$-isomorphism}

Given a symplectic foliation $(\calF,\omega)$ with corresponding regular Poisson structure $\Pi$ on $M$, let $G_0$ and $G_1$ be complements to the characteristic distribution $T\calF$. 

In the following, we describe an explicit isomorphism between the $L_{\infty}[1]$-algebras $(\frakX^\bullet(M)[2],\{\frakl^{G_0}_k\})$ and $(\frakX^\bullet(M)[2],\{\frakl^{G_1}_k\})$. The idea is as follows: deforming $\gr\Pi$ inside $\bbT M$ using the complements $G_i\oplus G_i^{0}$ for $i=0,1$ yields two $L_{\infty}[1]$-algebra structures on $\Omega^{\bullet}(\gr\Pi)[2]$ that are isomorphic by Thm. \ref{theor:GMS}. Since the $L_{\infty}[1]$-algebras $(\frakX^\bullet(M)[2],\{\frakl^{G_i}_k\})$ are obtained from the latter via a strict isomorphism (see Prop. \ref{prop:Koszul_algebra:isomorphism}), we will end up with an isomorphism between $(\frakX^\bullet(M)[2],\{\frakl^{G_0}_k\})$ and $(\frakX^\bullet(M)[2],\{\frakl^{G_1}_k\})$. 

Given the direct sum $\bbT M=\gr\Pi\oplus(G_0\oplus G_0^{0})$, we start by expressing $G_1\oplus G_1^{0}$ in this decomposition. Note that, since both $G_0$ and $G_1$ are transverse to $T\calF$, there exists $\epsilon\in\Gamma(\text{Hom}(G_0,T\calF))$ such that
\begin{equation}\label{eq:G1}
G_1=\gr(\epsilon)=\{\epsilon(Y)+Y:Y\in G_0\}.
\end{equation}
This immediately implies that
\begin{equation}\label{eq:G1Ann}
G_1^{0}=\gr(-\epsilon^{*})=\{\alpha-\epsilon^{*}\alpha:\alpha\in T^{*}\calF\}.
\end{equation}

\begin{lemma}\label{lem:eta}
We have that $G_1\oplus G_1^{0}=\gr(\eta)$, where $\eta\in\Gamma(\wedge^{2}\gr\Pi)$ is determined by
\begin{equation}\label{eq:etasharp}
\eta^{\sharp}(\psi(X+\beta))=-\omega^{\flat}(\epsilon(X))+\epsilon^{*}\big(\omega^{\flat}(\epsilon(X))-\beta\big)+\epsilon(X),
\end{equation}
for $X+\beta\in G_0\oplus G_0^{0}$.
Here $\psi$ denotes the canonical identification between $G_0\oplus G_0^{0}$ and $(\gr\Pi)^{*}$, i.e.
\begin{equation}\label{eq:psi}
\psi:G_0\oplus G_0^{0}\rightarrow(\gr\Pi)^{*}:X+\beta\mapsto\ldab X+\beta,-\rdab|_{\gr\Pi}.
\end{equation}
\end{lemma}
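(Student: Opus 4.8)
The plan is to compute $\eta$ directly by unravelling the definition \eqref{eq:eta}: the almost Dirac structure $B_1=G_1\oplus G_1^0$ is transverse to $\gr\Pi$, hence it is the graph of a bundle map $(\gr\Pi)^{*}\to\gr\Pi$ obtained by composing with the identification $\psi$ of \eqref{eq:psi}, and the skew-symmetry of this map (which holds because $B_1$ is \emph{isotropic} for $\ldab-,-\rdab$) makes it an element $\eta\in\Gamma(\wedge^2\gr\Pi)$. Concretely, $\eta$ is characterised by the property that for every $X+\beta\in G_0\oplus G_0^{0}$, the section $\eta^{\sharp}(\psi(X+\beta))+\psi(X+\beta)$ lies in $B_1$, where we view $\psi(X+\beta)\in(\gr\Pi)^{*}\cong G_0\oplus G_0^{0}=B_0$ via $B_0\cong A^{*}$. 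So the task is to exhibit, for each $X+\beta\in G_0\oplus G_0^0$, an explicit element of $\gr\Pi$ whose sum with $X+\beta$ lands in $G_1\oplus G_1^0$, and to read off $\eta^{\sharp}(\psi(X+\beta))$ as that element.

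The key computation runs as follows. Given $X+\beta\in G_0\oplus G_0^{0}$, with $X\in G_0$ and $\beta\in G_0^0=(T\calF)^{*}$-ish annihilator, I first decompose $X+\beta$ inside $\bbT M=\gr\Pi\oplus(G_1\oplus G_1^0)$. Writing $\epsilon(X)\in T\calF$ and using $\gamma^\flat|_{T\calF}=\omega^\flat$, the natural candidate for the $\gr\Pi$-component is built from $\epsilon(X)$ together with the covector $\omega^{\flat}(\epsilon(X))$; indeed $\Pi^\sharp(\omega^\flat(\epsilon(X)))=\epsilon(X)$ since $\Pi=-\omega^{-1}$ on $T\calF$, so $\epsilon(X)+(-\omega^{\flat}(\epsilon(X)))$ is — up to sign conventions — an element of $\gr\Pi=\{\iota_\alpha\Pi+\alpha\}$. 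One then checks that
\[
(X+\beta)-\big(\text{that element of }\gr\Pi\big)=\big(X+\epsilon(X)\big)+\big(\beta-\omega^\flat(\epsilon(X))\big)
\]
actually lies in $G_1\oplus G_1^0$, by verifying the vector part $X+\epsilon(X)\in G_1$ against \eqref{eq:G1} and the covector part against \eqref{eq:G1Ann}, which forces the appearance of $\epsilon^{*}$ and pins down the formula \eqref{eq:etasharp}. The last bookkeeping step is to match this element of $\gr\Pi$ with $\eta^{\sharp}(\psi(X+\beta))$ under the conventions for $\eta^\sharp$ fixed earlier in the paper (eq.~\eqref{eq:sharp:property1} and the sign in the definition of $\gr(\eta)$ in \eqref{eq:eta}), which accounts for the minus sign in front of $\omega^\flat(\epsilon(X))$.

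The main obstacle I anticipate is purely a matter of conventions rather than mathematical substance: there are several competing sign and identification choices in play — the pairing $\ldab-,-\rdab$ on $\bbT M$, the identification $\psi$ of $B_0$ with $(\gr\Pi)^{*}$, the convention $\gr(\eta)=\{\iota_\alpha\eta+\alpha\}$, and the relation $\Pi=-\omega^{-1}$ — and getting \eqref{eq:etasharp} with exactly the stated signs requires carefully threading all of them together. I would organise the proof so that each summand on the right-hand side of \eqref{eq:etasharp} is traced to a specific source (the $-\omega^\flat(\epsilon(X))$ term from $\gr\Pi\ni\epsilon(X)-\omega^\flat(\epsilon(X))$ together with the $\eta^\sharp$ sign convention, the $\epsilon^{*}(\cdots)$ term from projecting $\beta$ onto $G_1^0$ via \eqref{eq:G1Ann}, and the $\epsilon(X)$ term from the $G_1$-component via \eqref{eq:G1}), so that the verification reduces to three short independent checks. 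A secondary, genuinely mathematical point to confirm is that the resulting map $(\gr\Pi)^{*}\to\gr\Pi$ is skew, i.e.\ that it really defines an element of $\Gamma(\wedge^2\gr\Pi)$; this follows from isotropy of $G_1\oplus G_1^0$, and I would note it explicitly to justify the notation $\eta\in\Gamma(\wedge^2\gr\Pi)$.
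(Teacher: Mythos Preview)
Your approach is correct and essentially the same as the paper's: both carry out the direct linear-algebraic decomposition in $\bbT M=\gr\Pi\oplus(G_0\oplus G_0^0)$, the only cosmetic difference being that the paper starts from a generic element $\epsilon(Y)+Y+\alpha-\epsilon^*\alpha$ of $G_1\oplus G_1^0$, writes it as $(X+\beta)+(\delta+\Pi^\sharp\delta)$, and solves the resulting linear system for $\delta$ in terms of $X,\beta$, whereas you start from $X+\beta\in G_0\oplus G_0^0$ and guess-then-correct the $\gr\Pi$-piece. One phrasing caveat: your displayed element $(X+\epsilon(X))+(\beta-\omega^\flat(\epsilon(X)))$ does \emph{not} yet lie in $G_1\oplus G_1^0$ (its covector part sits in $G_0^0$, not $G_1^0$), and it is precisely this failure that --- as you yourself note --- forces you to add $\epsilon^*(\omega^\flat(\epsilon(X))-\beta)\in G_0^*$ to the candidate (which stays in $\gr\Pi$ since $\Pi^\sharp|_{G_0^*}=0$); so word that step as a correction to the candidate rather than as a verification.
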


Before proving the lemma, we remark that the right hand side of \eqref{eq:etasharp} indeed belongs to $\Gamma(\gr\Pi)$, because $\Pi^{\sharp}$ vanishes on elements of $G_{0}^{*}\cong T\calF^{0}$ and $\Pi^{\sharp}\circ\omega^{\flat}=-\text{Id}_{T\calF}$.

\begin{proof}[Proof of Lemma \ref{lem:eta}]
Because of \eqref{eq:G1} and \eqref{eq:G1Ann}, we know that
\[
G_1\oplus G_1^{0}=\{\epsilon(Y)+Y+\alpha-\epsilon^{*}(\alpha): Y\in G_0,\  \alpha\in T^{*}\calF\}.
\]
Picking $Y\in G_0$ and $\alpha\in T^{*}\calF$, we write
\begin{equation}\label{eq:dec}
\epsilon(Y)+Y+\alpha-\epsilon^{*}(\alpha)=(X+\beta)+(\delta+\Pi^{\sharp}(\delta)),
\end{equation}
for some $X+\beta\in G_0\oplus G_0^{0}$ and $\delta+\Pi^{\sharp}(\delta)\in\gr\Pi$.
We have to show that expressing $\delta+\Pi^{\sharp}(\delta)$ in terms of $X$ and $\beta$ yields the right hand side of \eqref{eq:etasharp}.
The equality \eqref{eq:dec} implies that
\[
\epsilon(Y)+Y-X-\Pi^{\sharp}(\delta)=\beta+\delta-\alpha+\epsilon^{*}(\alpha)\in TM\cap T^{*}M,
\]
so both sides must be zero. On one hand, we then have
\[
\epsilon(Y)-\Pi^{\sharp}(\delta)=X-Y,
\]
where the left hand side belongs to $T\calF$ and the right hand side to $G_0$. Again, both must be zero, hence
\begin{equation}\label{eq:pisharpdelta}
\Pi^{\sharp}(\delta)=\epsilon(X)   
\end{equation}
and 
\begin{equation}\label{eq:projdelta}
\pr_{T^{*}\calF}\delta=-\omega^{\flat}(\Pi^{\sharp}(\delta))=-\omega^{\flat}(\epsilon(X)).
\end{equation}
On the other hand, we get that 
\[
\beta-\alpha+\pr_{T^{*}\calF}\delta=-\epsilon^{*}(\alpha)-\pr_{G_0^{*}}\delta,
\]
where the left hand side belongs to $T^{*}\calF\cong G_0^{0}$ and the right hand side to $G_0^{*}$. So both are zero, hence
\[
\alpha=\beta+\pr_{T^{*}\calF}\delta=\beta-\omega^{\flat}(\epsilon(X)),
\]
using \eqref{eq:projdelta} in the last equality. This in turn then implies that
\begin{equation}\label{eq:projdeltabis}
\pr_{G_0^{*}}\delta=-\epsilon^{*}(\alpha)=\epsilon^{*}\big(\omega^{\flat}(\epsilon(X))-\beta\big).
\end{equation}
Putting together \eqref{eq:pisharpdelta}, \eqref{eq:projdelta} and \eqref{eq:projdeltabis}, we obtain the conclusion of the lemma:
\begin{align*}
\delta+\Pi^{\sharp}(\delta)&=\pr_{T^{*}\calF}\delta+\pr_{G_0^{*}}\delta+\Pi^{\sharp}(\delta)\\
&=-\omega^{\flat}(\epsilon(X))+\epsilon^{*}\big(\omega^{\flat}(\epsilon(X))-\beta\big)+\epsilon(X).\qedhere
\end{align*}
\end{proof}

We are now in a position where we can apply the results of \S\ref{subsec:Diraccase} to the Dirac structure $\gr\Pi\subset \bbT M$ with almost Dirac complements $G_0\oplus G_0^{0}$ and $G_1\oplus G_1^{0}$. Using $\eta\in\Gamma(\wedge^{2}\gr\Pi)$ as defined in Lemma \ref{lem:eta}, we define the coderivation $\sf M$ of ${\sf S}(\Omega^\bullet(\gr\Pi)[2])$ with Taylor coefficients $\{{\sf M}_k\}$ given by
\begin{equation*}
{\sf M}_k(\omega_1\odot\ldots\odot\omega_k)=P\{\{\ldots\{{\sf m}\omega_1,\omega_2\},\ldots\},\omega_k\},
\end{equation*}
where $\{-,-\}$ is the Poisson bracket on $T^{*}[2]\gr\Pi[1]$, $\sf m=\{\eta,-\}$ and $P:C^{\infty}(T^{*}[2]\gr\Pi[1])\rightarrow\Omega^{\bullet}(\gr\Pi)$ is the restriction to the zero section. By Thm.~\ref{theor:GMS}, we can exponentiate $\sf M$ to an $L_{\infty}[1]$-isomorphism 
\[
e^{\sf M}:\left(\Omega^\bullet(\gr\Pi)[2],\mu_1^{G_0\oplus G_0^{0}},-\mu_2^{G_0\oplus G_0^{0}},\mu_3^{G_0\oplus G_0^{0}}\right)\rightarrow\left(\Omega^\bullet(\gr\Pi)[2],\mu_1^{G_1\oplus G_1^{0}},-\mu_2^{G_1\oplus G_1^{0}},\mu_3^{G_1\oplus G_1^{0}}\right).
\]
Along with Prop. \ref{prop:Koszul_algebra:isomorphism}, we obtain the following diagram of  $L_{\infty}[1]$-algebras and
$L_{\infty}[1]$-isomorphisms:
\begin{equation}\label{eq:diagram}
\begin{tikzcd}
 \left(\Omega^\bullet(\gr\Pi)[2],\mu_1^{G_0\oplus G_0^{0}},-\mu_2^{G_0\oplus G_0^{0}},\mu_3^{G_0\oplus G_0^{0}}\right)\arrow{r}{e^{\sf M}}\arrow{d}{\wedge^\bullet(\calR_\Pi|_{T^\ast M})^\ast}&\left(\Omega^\bullet(\gr\Pi)[2],\mu_1^{G_1\oplus G_1^{0}},-\mu_2^{G_1\oplus G_1^{0}},\mu_3^{G_1\oplus G_1^{0}}\right)\arrow{d}{\wedge^\bullet(\calR_\Pi|_{T^\ast M})^\ast}\\
 \left(\frakX^\bullet(M)[2],\frakl^{G_0}_1,-\frakl^{G_0}_2,\frakl^{G_0}_3\right)\arrow[r,dashed] & \left(\frakX^\bullet(M)[2],\frakl^{G_1}_1,-\frakl^{G_1}_2,\frakl^{G_1}_3\right)
\end{tikzcd},
\end{equation}
and the next step is to describe the induced isomorphism in the bottom row of this diagram. The isomorphism in question is obtained by first transporting $\eta$ along the identification $\calR_{-\Pi}:\gr\Pi\rightarrow T^{*}M$; this yields a two-form $\xi\in\Omega^{2}(M)$ which we now turn to describe.

\begin{lemma}\label{lem:ksi}
Define the two-form $\xi\in\Gamma(G_0^{*}\otimes T^{*}\calF\oplus\wedge^{2}G_0^{*})$ by the rules
\[
\begin{cases}
\xi(V,W)=0\hspace{2.68cm}\text{for}\ V,W\in\Gamma(T\calF),\\
\xi(V,W)=-\omega(V,\epsilon(W))\hspace{0.97cm}\text{for}\ V\in\Gamma(T\calF),\ W\in\Gamma(G_0),\\
\xi(V,W)=\omega(\epsilon(V),\epsilon(W))\hspace{0.75cm}\text{for}\ V,W\in\Gamma(G_0).
\end{cases}
\]
Then $\xi\in\Omega^{2}(M)$ corresponds with $\eta\in\Gamma(\wedge^{2}\gr\Pi)$ under the isomorphism $\calR_{-\Pi}:\gr\Pi\rightarrow T^{*}M$, i.e. 
\[
\wedge^{2}\calR_{-\Pi}(\eta)=\xi.
\]
\end{lemma}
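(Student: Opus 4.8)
The plan is to compute the bivector $\wedge^{2}\calR_{-\Pi}(\eta)$ on the bundle $T^{*}M$ explicitly, by evaluating it on pairs of tangent vectors, and to recognise in the result the three rules defining $\xi$. The first step is to identify the relevant dual map. Since $\calR_{-\Pi}=(\calR_\Pi|_{T^{*}M})^{-1}$ sends $\Pi^{\sharp}(\delta)+\delta\in\gr\Pi$ to $\delta$, its transpose $\calR_{-\Pi}^{\top}\colon TM=(T^{*}M)^{*}\to(\gr\Pi)^{*}$ satisfies $\langle\calR_{-\Pi}^{\top}(v),\,\Pi^{\sharp}(\delta)+\delta\rangle=\delta(v)$ for every $\delta\in T^{*}M$. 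Using the identification $\psi$ of \eqref{eq:psi} together with the relations $\ker\Pi^{\sharp}=T\calF^{0}$ and $\Pi^{\sharp}\circ\omega^{\flat}=-\text{Id}_{T\calF}$ recalled after Lemma~\ref{lem:eta}, one checks that
\[
\calR_{-\Pi}^{\top}(v)=\psi\big(\pr_{G_{0}}v+\omega^{\flat}(\pr_{T\calF}v)\big),\qquad v\in TM.
\]
As $\calR_{-\Pi}$ identifies $\gr\Pi$ with $T^{*}M$, the pushforward $\wedge^{2}\calR_{-\Pi}(\eta)$ is a section of $\wedge^{2}T^{*}M$, i.e.\ a two-form on $M$, whose value on $v,w\in TM$ equals $\eta\big(\calR_{-\Pi}^{\top}v,\,\calR_{-\Pi}^{\top}w\big)$; so it remains to evaluate this.

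To do so I would feed $\calR_{-\Pi}^{\top}(w)=\psi\big(\pr_{G_{0}}w+\omega^{\flat}(\pr_{T\calF}w)\big)$ into the expression \eqref{eq:etasharp} for $\eta^{\sharp}$ coming from Lemma~\ref{lem:eta}; this produces a concrete element of $\gr\Pi$ whose $T^{*}M$-component is
\[
\delta_{w}:=-\omega^{\flat}\big(\epsilon(\pr_{G_{0}}w)\big)+\epsilon^{*}\big(\omega^{\flat}(\epsilon(\pr_{G_{0}}w))-\omega^{\flat}(\pr_{T\calF}w)\big).
\]
By the description of $\calR_{-\Pi}^{\top}$ above, pairing $\calR_{-\Pi}^{\top}(v)$ with this element of $\gr\Pi$ amounts to evaluating $\delta_{w}$ on $v$. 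Decomposing $v=\pr_{T\calF}v+\pr_{G_{0}}v$ and using that $\omega^{\flat}(\epsilon(\pr_{G_{0}}w))\in G_{0}^{0}$ kills $G_{0}$ while $\epsilon^{*}(-)\in T\calF^{0}$ kills $T\calF$, the pairing $\langle\delta_{w},v\rangle$ collapses to a sum of three terms, each of the form $\omega(\cdot,\cdot)$ with arguments drawn from $\pr_{T\calF}v$, $\pr_{T\calF}w$, $\epsilon(\pr_{G_{0}}v)$, $\epsilon(\pr_{G_{0}}w)$. Expanding $\xi(v,w)$ by bilinearity and antisymmetry from the three rules in the statement — the $T\calF\times T\calF$ rule contributing nothing — one finds exactly the same three terms, yielding the claimed identity $\wedge^{2}\calR_{-\Pi}(\eta)=\xi$.

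The computation is essentially all bookkeeping, and the only places where care is needed are: keeping the several canonical identifications consistent, namely $(\gr\Pi)^{*}\cong G_{0}\oplus G_{0}^{0}$ via $\psi$, $G_{0}^{0}\cong T^{*}\calF$, $T\calF^{0}\cong G_{0}^{*}$, and the transpose of $\calR_{-\Pi}$; and tracking the signs, which come from $\Pi^{\sharp}\circ\omega^{\flat}=-\text{Id}_{T\calF}$ and from the conventions relating $\eta$, $\iota_{\bullet}\eta$ and $\eta^{\sharp}$. Alternatively one can avoid transposing bivectors altogether and instead verify the graph identity $\calR_{\Pi}\calR_{\gamma}(\gr(\xi))=G_{1}\oplus G_{1}^{0}$ in $\bbT M$: because the Courant isomorphism of Lemma~\ref{lem:Koszul_algebra:Courant_iso} is orthogonal and carries the splitting $\bbT M=T^{*}M\oplus TM$ onto $\bbT M=\gr\Pi\oplus(G_{0}\oplus G_{0}^{0})$, it sends the graph of a bivector to the graph of the pushed-forward bivector, so this identity is precisely the assertion of the lemma; it unwinds, via Lemma~\ref{lem:eta} and \eqref{eq:lem:Koszul_algebra:almost_Lie_algbd_iso}, to the same computation as above.
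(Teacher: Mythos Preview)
Your proposal is correct and follows essentially the same route as the paper. The paper organizes the computation into two auxiliary claims --- Claim~1 is the pushforward identity $(\wedge^{2}\calR_{-\Pi}(\eta))^{\flat}=\calR_{-\Pi}\circ\eta^{\sharp}\circ(\calR_{-\Pi})^{*}$, which you invoke as the standard fact $\wedge^{2}\calR_{-\Pi}(\eta)(v,w)=\eta(\calR_{-\Pi}^{\top}v,\calR_{-\Pi}^{\top}w)$, and Claim~2 is precisely your formula $\calR_{-\Pi}^{\top}(v)=\psi(\pr_{G_{0}}v+\omega^{\flat}(\pr_{T\calF}v))$, written there as $\psi(\pr_{G_0}V+\gamma_{G_0}^{\flat}(V))$ --- and then feeds into \eqref{eq:etasharp} and checks the three cases $V,W\in T\calF$, $T\calF\times G_0$, $G_0\times G_0$ separately, rather than expanding a general $v,w$ as you do; but the content is identical.
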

\begin{proof}
We first prove two auxiliary results.

\vspace{0.1cm}

\underline{Claim 1:} $(\wedge^{2}\calR_{-\Pi}(\eta))^{\flat}=\calR_{-\Pi}\circ\eta^{\sharp}\circ(\calR_{-\Pi})^{*}$ as maps $TM\rightarrow T^{*}M$.

\vspace{0.1cm}
\noindent
To prove Claim 1, we may assume that $\eta$ is decomposable, i.e. $\eta=\eta_1\wedge\eta_2$ for some $\eta_1,\eta_2\in\Gamma(\gr\Pi)$. We then have for $V,W\in TM$:
\begin{align*}
\big\langle(\wedge^{2}\calR_{-\Pi}(\eta_1\wedge\eta_2))^{\flat}(V),W \big\rangle&=\big\langle\calR_{-\Pi}(\eta_1)\wedge\calR_{-\Pi}(\eta_2),V\wedge W \big\rangle\\
&=\begin{vmatrix}
\langle \calR_{-\Pi}(\eta_1),V\rangle & \langle\calR_{-\Pi}(\eta_1),W \rangle\\
\langle \calR_{-\Pi}(\eta_2),V\rangle & \langle \calR_{-\Pi}(\eta_2),W\rangle
\end{vmatrix}\\
&=\begin{vmatrix}
\langle \eta_1,(\calR_{-\Pi})^{*}(V)\rangle & \langle \eta_1,(\calR_{-\Pi})^{*}(W)\rangle\\
\langle \eta_2,(\calR_{-\Pi})^{*}(V)\rangle & \langle \eta_2,(\calR_{-\Pi})^{*}(W)\rangle
\end{vmatrix}\\
&=\Big\langle\big\langle\eta_1,(\calR_{-\Pi})^{*}(V) \big\rangle\eta_2-\big\langle\eta_2,(\calR_{-\Pi})^{*}(V)\big\rangle\eta_1,(\calR_{-\Pi})^{*}(W)\Big\rangle\\
&=\big\langle (\eta_1\wedge\eta_2)^{\sharp}((\calR_{-\Pi})^{*}(V)),(\calR_{-\Pi})^{*}(W) \big\rangle\\
&=\big\langle(\calR_{-\Pi}\circ(\eta_1\wedge\eta_2)^{\sharp}\circ(\calR_{-\Pi})^{*})(V),W \big\rangle.
\end{align*}

\vspace{0.1cm}

{The second auxiliary result uses again the map $\psi$ appearing in \eqref{eq:psi}.
Also recall from \S\ref{subsec:param} that $\gamma_{G_0}\in\Omega^{2}(M)$ is the extension of the leafwise symplectic form $\omega$ by zero on $G_0$.}

\vspace{0.1cm}

\underline{Claim 2:} 
The maps $\psi$ and $\calR_{-\Pi}$ are related as follows, for all $V\in\mathfrak{X}(M)$:
\[
(\calR_{-\Pi})^{*}(V)=\psi(\pr_{G_0}V+\gamma_{G_0}^{\flat}(V)).
\]

\vspace{0.1cm}
\noindent
To prove Claim 2, we note that for any $V\in\mathfrak{X}(M)$,  we can write
\begin{equation}\label{eq:assumption}
(\calR_{-\Pi})^{*}(V)=\psi(X+\beta)
\end{equation}
for some $X\in\Gamma(G_0)$ and $\beta\in\Gamma(G_0^{0})$, and we have to express $X$ and $\beta$ in terms of $V$. The equality \eqref{eq:assumption} implies that for all $\delta\in\Omega^{1}(M)$, we have
\[
\langle (\calR_{-\Pi})^{*}(V),\delta+\Pi^{\sharp}(\delta)\rangle = \langle \psi(X+\beta),\delta+\Pi^{\sharp}(\delta)\rangle,
\]
which amounts to
\[
\delta(V)=\delta(X-\Pi^{\sharp}(\beta)).
\]
Since this holds for all $\delta\in\Omega^{1}(M)$, we must have that $X-V-\Pi^{\sharp}(\beta)$ vanishes. Consequently,
\[
\begin{cases}
X=\pr_{G_0}V,\\
\beta=\gamma_{G_0}^{\flat}(-\Pi^{\sharp}(\beta))=\gamma_{G_0}^{\flat}(V-X)=\gamma_{G_0}^{\flat}(V).
\end{cases}
\]
 
\vspace{0.1cm}

We now prove the lemma. Using Claim 1 and Claim 2 above, we have for $V,W\in\mathfrak{X}(M)$:
\begin{align}\label{eq:wedge2}
(\wedge^{2}\calR_{-\Pi}(\eta))(V,W)&=\langle (\calR_{-\Pi}\circ\eta^{\sharp}\circ(\calR_{-\Pi})^{*})(V),W\rangle\nonumber\\
&=\big\langle (\calR_{-\Pi}\circ\eta^{\sharp}\circ\psi)(\pr_{G_0}V+\gamma_{G_0}^{\flat}(V)),W\big\rangle\nonumber\\
&=\left\langle \calR_{-\Pi}\Big(-\omega^{\flat}(\epsilon(\pr_{G_0}V))+\epsilon^{*}\big(\omega^{\flat}(\epsilon(\pr_{G_0}V))-\gamma_{G_0}^{\flat}(V)\big)+\epsilon(\pr_{G_0}V)\Big),W\right\rangle,
\end{align}
where the last equality holds by Lemma \ref{lem:eta}. Expanding the right hand side of \eqref{eq:wedge2} {using $\Pi^{\sharp}\circ\omega^{\flat}=-\text{Id}_{T\calF}$
}, we get
\[
(\wedge^{2}\calR_{-\Pi}(\eta))(V,W)=\left\langle -\omega^{\flat}(\epsilon(\pr_{G_0}V))+ \epsilon^{*}\big(\omega^{\flat}(\epsilon(\pr_{G_0}V))-\gamma_{G_0}^{\flat}(V)\big),W\right\rangle.
\]
In particular, we have:
\begin{itemize}
    \item If $V,W\in\Gamma(T\calF)$, then $(\wedge^{2}\calR_{-\Pi}(\eta))(V,W)=0$.
    \item If $V\in\Gamma(T\calF)$ and $W\in\Gamma(G_0)$, then
    \[
    (\wedge^{2}\calR_{-\Pi}(\eta))(V,W)=\big\langle -\epsilon^{*}(\omega^{\flat}(V)),W\big\rangle=-\omega(V,\epsilon(W)).
    \]
    \item If $V,W\in\Gamma(G_0)$, then
    \[
    (\wedge^{2}\calR_{-\Pi}(\eta))(V,W)=\big\langle -\omega^{\flat}(\epsilon(V))+\epsilon^{*}(\omega^{\flat}(\epsilon(V))),W\big\rangle=\omega(\epsilon(V),\epsilon(W)).
    \]
\end{itemize}
This shows that $(\wedge^{2}\calR_{-\Pi}(\eta))=\xi$, so the proof is finished.
\end{proof}

Using the two-form $\xi\in\Omega^{2}(M)$ defined in Lemma \ref{lem:ksi}, we define the coderivation ${\sf N}$ of ${\sf S}(\mathfrak{X}^\bullet(M)[2])$ with Taylor coefficients $\{{\sf N}_k\}$ given by
\begin{equation}\label{eq:Ncoeff}
{\sf N}_k(Q_1\odot\ldots\odot Q_k)=P\{\{\ldots\{{\sf n}Q_1,Q_2\},\ldots\},Q_k\},
\end{equation}
where $\{-,-\}$ is the Poisson bracket on $T^{*}[2]T^{*}M[1]$, ${\sf n}=\{\xi,-\}$ and $P:C^{\infty}(T^{*}[2]T^{*}M[1])\rightarrow\mathfrak{X}^{\bullet}(M)$ is the restriction to the zero section. As expected, the exponential $e^{{\sf N}}$ is exactly the dashed arrow in the diagram \eqref{eq:diagram} that we aim to describe.

\begin{lemma}\label{lem:expfits}
The coderivation ${\sf N}$ defined in \eqref{eq:Ncoeff} exponentiates to $e^{{\sf N}}:{\sf S}(\mathfrak{X}^\bullet(M)[2])\rightarrow{\sf S}(\mathfrak{X}^\bullet(M)[2])$. The latter fits in the following commutative diagram of $L_{\infty}[1]$-algebras and $L_{\infty}[1]$-isomorphisms: 
\[
\begin{tikzcd}
 \left(\Omega^\bullet(\gr\Pi)[2],\mu_1^{G_0\oplus G_0^{0}},-\mu_2^{G_0\oplus G_0^{0}},\mu_3^{G_0\oplus G_0^{0}}\right)\arrow{r}{e^{\sf M}}\arrow{d}{\wedge^\bullet(\calR_\Pi|_{T^\ast M})^\ast}&\left(\Omega^\bullet(\gr\Pi)[2],\mu_1^{G_1\oplus G_1^{0}},-\mu_2^{G_1\oplus G_1^{0}},\mu_3^{G_1\oplus G_1^{0}}\right)\arrow{d}{\wedge^\bullet(\calR_\Pi|_{T^\ast M})^\ast}\\
 \left(\frakX^\bullet(M)[2],\frakl^{G_0}_1,-\frakl^{G_0}_2,\frakl^{G_0}_3\right)\arrow{r}{e^{\sf N}} & \left(\frakX^\bullet(M)[2],\frakl^{G_1}_1,-\frakl^{G_1}_2,\frakl^{G_1}_3\right)
\end{tikzcd},
\]
\end{lemma}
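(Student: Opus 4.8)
\textbf{Proof proposal for Lemma~\ref{lem:expfits}.}

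The plan is to transport everything along the strict isomorphism $\wedge^\bullet(\calR_\Pi|_{T^\ast M})^\ast$ of Prop.~\ref{prop:Koszul_algebra:isomorphism}, using the fact that it is induced by a vector bundle isomorphism, and to exploit the naturality of Voronov's derived bracket construction. First I would observe that the Lie algebroid isomorphism $\calR_\Pi|_{T^\ast M}\colon T^\ast M\to\gr\Pi$ of \eqref{eq:lem:Koszul_algebra:Lie_algbd_iso} lifts canonically to a degree-preserving symplectomorphism $T^\ast[2]T^\ast M[1]\to T^\ast[2]\gr\Pi[1]$ of the corresponding shifted cotangent graded manifolds, compatible with the Poisson brackets $\{-,-\}$ and with the restriction-to-zero-section maps $P$ on both sides. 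Under this symplectomorphism, $\eta\in\Gamma(\wedge^2\gr\Pi)\subset C^\infty(T^\ast[2]\gr\Pi[1])$ is pulled back precisely to $\xi=\wedge^2\calR_{-\Pi}(\eta)\in\Gamma(\wedge^2 T^\ast M)\subset C^\infty(T^\ast[2]T^\ast M[1])$ by Lemma~\ref{lem:ksi} (note $\calR_{-\Pi}=(\calR_\Pi|_{T^\ast M})^{-1}$). Consequently the derivations ${\sf m}=\{\eta,-\}$ and ${\sf n}=\{\xi,-\}$ are intertwined, and therefore so are the coderivations ${\sf M}$ on ${\sf S}(\Omega^\bullet(\gr\Pi)[2])$ and ${\sf N}$ on ${\sf S}(\frakX^\bullet(M)[2])$, since their Taylor coefficients are given by the same universal formula in terms of $P$, $\{-,-\}$ and ${\sf m}$ (resp.\ ${\sf n}$). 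In particular ${\sf N}$ inherits pronilpotence from ${\sf M}$ (by Lemma~\ref{lem:M2}, only ${\sf N}_2$ is nonzero), so $e^{\sf N}$ is a well-defined graded coalgebra automorphism of ${\sf S}(\frakX^\bullet(M)[2])$.

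Next I would establish commutativity of the square. Denoting $\Phi:=\wedge^\bullet(\calR_\Pi|_{T^\ast M})^\ast$, the intertwining relation between ${\sf M}$ and ${\sf N}$ says exactly that $\Phi\circ{\sf N}={\sf M}\circ\Phi$ as coderivations along $\Phi$; exponentiating gives $\Phi\circ e^{\sf N}=e^{\sf M}\circ\Phi$, which is the desired commutativity. Since $\Phi$ is a strict $L_\infty[1]$-isomorphism (Prop.~\ref{prop:Koszul_algebra:isomorphism}) carrying $\{\mu_k^{G_i\oplus G_i^0}\}$ to $\{\frakl_k^{G_i}\}$ for $i=0,1$, and $e^{\sf M}$ is an $L_\infty[1]$-isomorphism for the sign-twisted brackets by Thm.~\ref{theor:GMS}, the composite $e^{\sf N}=\Phi\circ e^{\sf M}\circ\Phi^{-1}$ is automatically an $L_\infty[1]$-isomorphism
\[
\left(\frakX^\bullet(M)[2],\frakl_1^{G_0},-\frakl_2^{G_0},\frakl_3^{G_0}\right)\longrightarrow\left(\frakX^\bullet(M)[2],\frakl_1^{G_1},-\frakl_2^{G_1},\frakl_3^{G_1}\right),
\]
and all four arrows in the diagram are $L_\infty[1]$-isomorphisms, as claimed.

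The main obstacle I anticipate is verifying cleanly that the lift of the Lie algebroid isomorphism to the shifted cotangent bundles is a Poisson map compatible with $P$, and tracking the sign/décalage conventions so that the identification ${\sf M}\leftrightarrow{\sf N}$ is literally exact rather than up to sign. Concretely, one must check that under the cotangent lift the canonical degree $-2$ Poisson bracket is preserved and the zero section of $T^\ast[2]T^\ast M[1]$ maps to that of $T^\ast[2]\gr\Pi[1]$ (so $P$-compatibility holds); the first is automatic for a cotangent lift of a graded diffeomorphism, and the second because $\calR_\Pi|_{T^\ast M}$ is a genuine vector bundle map, hence sends the core to the core. Given these two points, the identity of coderivations follows from the universal formula \eqref{eq:M_k} verbatim. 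As a sanity check one may verify directly at the level of the single nonzero Taylor coefficient: by Lemma~\ref{lem:M2}, $e^{\sf N}$ acts as $\id+{\sf N}$ with ${\sf N}_2(Q_1\odot Q_2)=(-1)^{|Q_1|}(Q_1^\flat\wedge Q_2^\flat)\xi$, and one checks this is precisely $\Phi\circ(\id+{\sf M})\circ\Phi^{-1}$ using $\Phi=\wedge^\bullet(\calR_{-\Pi})$ and $\wedge^2\calR_{-\Pi}(\eta)=\xi$, together with the fact that the flat/sharp operations are intertwined by $\calR_{\pm\Pi}$.
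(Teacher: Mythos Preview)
Your proposal is correct and takes the functorial route that the paper itself mentions just before its proof (``The lemma can be obtained by functoriality from Lemma~\ref{lem:Koszul_algebra:Courant_iso} and \S\ref{subsec:Diraccase}, but we provide a direct proof here''). The paper instead carries out what you call the ``sanity check'': it verifies directly, using only Lemma~\ref{lem:M2} and Lemma~\ref{lem:ksi}, that the single nonzero Taylor coefficient satisfies
\[
\wedge^{k+l-2}\calR_{\Pi}^{*}\Big({\sf M}_2\big(\wedge^{k}\calR_{-\Pi}^{*}(Q_1)\odot\wedge^{l}\calR_{-\Pi}^{*}(Q_2)\big)\Big)={\sf N}_2(Q_1\odot Q_2),
\]
from which the intertwining of the coderivations and hence the commutativity of the square follow. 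Your approach is conceptually cleaner (everything is transported at once via the cotangent lift of $\calR_\Pi|_{T^\ast M}$), while the paper's is more elementary in that it avoids discussing the graded symplectomorphism $T^\ast[2]T^\ast M[1]\to T^\ast[2]\gr\Pi[1]$ altogether.

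Two small points to clean up. First, the intertwining relation is written in the wrong direction: since $\Phi$ goes from $\Omega^\bullet(\gr\Pi)$ to $\frakX^\bullet(M)$, it should read ${\sf N}\circ\Phi=\Phi\circ{\sf M}$, giving $e^{\sf N}=\Phi\circ e^{\sf M}\circ\Phi^{-1}$ (which is what you correctly state at the end). Second, the claim ``$e^{\sf N}$ acts as $\id+{\sf N}$'' is only true on ${\sf S}^{\le 2}$; on higher symmetric powers there are further terms ${\sf N}^k/k!$, though this does not matter since you only need the relation at the level of coderivations, not their exponentials.
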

The lemma can be obtained by functoriality from Lemma \ref{lem:Koszul_algebra:Courant_iso} and 
\S\ref{subsec:Diraccase}, but we provide a direct proof here. 
\begin{proof}
By Lemma $\ref{lem:M2}$, we know that ${\sf N}_{2}$ is the only nonzero Taylor coefficient of ${\sf N}$. By Prop.~\ref{prop:coalgebra_coderivation}, this implies that  ${\sf N}({\sf S}^n(\mathfrak{X}^\bullet(M)[2]))\subset{\sf S}^{n-1}(\mathfrak{X}^\bullet(M)[2])$. Hence, ${\sf N}$ is pronilpotent, so its exponential $e^{{\sf N}}$ is a well-defined graded coalgebra automorphism of ${\sf S}(\mathfrak{X}^\bullet(M)[2])$.

To show that the diagram commutes, it suffices to prove that
\[
\wedge^{\bullet}\calR_{\Pi}^{*}\circ{\sf M} \circ \wedge^{\bullet}\calR_{-\Pi}^{*}={\sf N},
\]
{which is an equality of self-maps on ${\sf S}(\mathfrak{X}^\bullet(M)[2])$}. This in turn will follow if we show that
\[
\wedge^{k+l-2}\calR_{\Pi}^{*}\left({\sf M}_{2}\left(\wedge^{k}\calR_{-\Pi}^{*}(Q_1)\odot\wedge^{l}\calR^{*}_{-\Pi}(Q_2)\right)\right)={\sf N}_{2}(Q_1\odot Q_2),\hspace{0.5cm}\forall  Q_1\in\mathfrak{X}^{k}(M),Q_2\in\mathfrak{X}^{l}(M).
\]
To see that this equality holds, note that 
\begin{align*}
\wedge^{k+l-2}\calR_{\Pi}^{*}\left({\sf M}_{2}\left(\wedge^{k}\calR_{-\Pi}^{*}(Q_1)\odot\wedge^{l}\calR^{*}_{-\Pi}(Q_2)\right)\right)&=(-1)^{k}\wedge^{k+l-2}\calR_{\Pi}^{*}\left(\big((\wedge^{k}\calR_{-\Pi}^{*}(Q_1))^{\flat}\wedge(\wedge^{l}\calR^{*}_{-\Pi}(Q_2))^{\flat}\big)\eta\right)\\
&=(-1)^{k}(Q_1^{\sharp}\wedge Q_2^{\sharp})(\wedge^{2}\calR_{-\Pi}(\eta))\\
&=(-1)^{k}(Q_1^{\sharp}\wedge Q_2^{\sharp})(\xi)\\
&={\sf N}_2(Q_1\odot Q_2),
\end{align*}
where the first and the last equality hold by Lemma \ref{lem:M2}, and the third equality uses Lemma \ref{lem:ksi}.
\end{proof}

\subsubsection{Restricting to $L_{\infty}[1]$-subalgebras}

In the previous lemma, we found an $L_{\infty}[1]$-isomorphism
\[
e^{{\sf N}}:\big(\frakX^\bullet(M)[2],\frakl^{G_0}_1,-\frakl^{G_0}_2,\frakl^{G_0}_3\big)\rightarrow\big(\frakX^\bullet(M)[2],\frakl^{G_1}_1,-\frakl^{G_1}_2,\frakl^{G_1}_3\big).
\]
It remains to show that it restricts to an isomorphism between the $L_{\infty}[1]$-subalgebras of good multivector fields. This is a consequence of the following lemma.

\begin{proposition}\label{prop:subalgebras}
The coderivation ${\sf N}$ of ${\sf S}(\mathfrak{X}^\bullet(M)[2])$ preserves ${\sf S}(\mathfrak{X}_{\good}^\bullet(M)[2])$. Consequently, we get an $L_{\infty}[1]$-isomorphism
\[
e^{{\sf N}}:\big(\frakX_{\good}^\bullet(M)[2],\frakl^{G_0}_1,-\frakl^{G_0}_2,\frakl^{G_0}_3\big)\rightarrow\big(\frakX_{\good}^\bullet(M)[2],\frakl^{G_1}_1,-\frakl^{G_1}_2,\frakl^{G_1}_3\big).
\]
\end{proposition}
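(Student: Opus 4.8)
The plan is to reduce the statement to a pointwise-algebraic claim about the single nonzero Taylor coefficient of $\sf N$, and then settle that claim using the shape of $\xi$ dictated by Lemma~\ref{lem:ksi} together with two elementary closure properties of good multivector fields.

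First I would use Lemma~\ref{lem:M2} to note that $\sf N$ has ${\sf N}_2$ as its only nonzero Taylor coefficient, so (as already observed in the proof of Lemma~\ref{lem:expfits}, via Prop.~\ref{prop:coalgebra_coderivation}) the coderivation $\sf N$ sends ${\sf S}^n(\frakX^\bullet(M)[2])$ to ${\sf S}^{n-1}(\frakX^\bullet(M)[2])$ by inserting ${\sf N}_2$ into two tensor slots and leaving the rest untouched. Hence $\sf N$ preserves ${\sf S}(\frakX_{\good}^\bullet(M)[2])$ as soon as ${\sf N}_2$ maps $\frakX_{\good}^\bullet(M)\odot\frakX_{\good}^\bullet(M)$ into $\frakX_{\good}^\bullet(M)$; the same argument applies to $-{\sf N}$, so $e^{\pm{\sf N}}$ both preserve this sub-coalgebra and $e^{\sf N}$ restricts to a graded coalgebra automorphism of ${\sf S}(\frakX_{\good}^\bullet(M)[2])$. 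Since $e^{\sf N}$ intertwines the codifferentials associated with $\frakl_1^{G_0},-\frakl_2^{G_0},\frakl_3^{G_0}$ and $\frakl_1^{G_1},-\frakl_2^{G_1},\frakl_3^{G_1}$ by Lemma~\ref{lem:expfits}, and these codifferentials restrict to ${\sf S}(\frakX_{\good}^\bullet(M)[2])$ by Prop.~\ref{prop:good_L_infty_algebra}, the restriction of $e^{\sf N}$ is the asserted $L_\infty[1]$-isomorphism. Thus everything reduces to the claim on ${\sf N}_2$.

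For that claim I would record three facts. (a) If $Q\in\frakX_{\good}^\bullet(M)$ and $f\in\Gamma(T\calF^{0})$, then $\iota_f Q\in\frakX^\bullet(\calF)$: indeed $\iota_\mu\iota_f Q=0$ for every $\mu\in\Gamma(T\calF^{0})$ straight from Def.~\ref{def:good_multi-vector_fields}, and $\iota_\mu V=0$ for all $\mu\in\Gamma(T\calF^{0})$ characterizes multivectors tangent to $\calF$. (b) If $\theta\in\Omega^1(M)$ and $Q\in\frakX_{\good}^\bullet(M)$, then $\iota_\theta Q\in\frakX_{\good}^\bullet(M)$, since contractions by $1$-forms anticommute, so $\iota_\alpha\iota_\beta\iota_\theta Q=\iota_\theta\iota_\alpha\iota_\beta Q=0$ for $\alpha,\beta\in\Gamma(T\calF^{0})$. (c) If $P\in\frakX^\bullet(\calF)$ and $R\in\frakX_{\good}^\bullet(M)$, then $P\wedge R\in\frakX_{\good}^\bullet(M)$: using $\iota_\mu P=0$ for $\mu\in\Gamma(T\calF^{0})$ and that $\iota$ is a derivation, one gets $\iota_\alpha\iota_\beta(P\wedge R)=P\wedge\iota_\alpha\iota_\beta R=0$ for $\alpha,\beta\in\Gamma(T\calF^{0})$ (and the same holds for $R\wedge P$ by graded commutativity). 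Now, by Lemma~\ref{lem:ksi} the two-form $\xi$ vanishes on $T\calF\wedge T\calF$, equivalently $\xi$ is a section of $T\calF^{0}\wedge T^\ast M\subset\wedge^2 T^\ast M$, so locally $\xi=\sum_a f^a\wedge\theta^a$ with $f^a\in\Gamma(T\calF^{0})$ and $\theta^a\in\Omega^1(M)$. Combining ${\sf N}_2(Q_1\odot Q_2)=(-1)^{|Q_1|}(Q_1^\sharp\wedge Q_2^\sharp)\xi$ (Lemma~\ref{lem:M2}, as used in the proof of Lemma~\ref{lem:expfits}) with the identity for $(Q_1^\sharp\wedge Q_2^\sharp)(f^a\wedge\theta^a)$ recalled there, ${\sf N}_2(Q_1\odot Q_2)$ is, up to signs, a sum of terms $\iota_{f^a}Q_1\wedge\iota_{\theta^a}Q_2$ and $\iota_{\theta^a}Q_1\wedge\iota_{f^a}Q_2$; by (a), (b), (c) each of these is a wedge of a leafwise multivector with a good one, hence good. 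Therefore ${\sf N}_2(Q_1\odot Q_2)\in\frakX_{\good}^\bullet(M)$, as needed.

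The main obstacle is conceptual rather than computational: a wedge of two good multivector fields is \emph{not} good in general (already the wedge of two vector fields transverse to $\calF$ violates the defining condition), so one cannot simply say that ${\sf N}_2(Q_1\odot Q_2)$ is good because each $\iota_\bullet Q_i$ is. The decisive input is exactly Lemma~\ref{lem:ksi}, which forces every term of $\xi$ to carry a conormal leg $f^a\in\Gamma(T\calF^{0})$; fact (a) then upgrades one factor of each summand from merely ``good'' to ``tangent to $\calF$'', and fact (c) shows that wedging a leafwise multivector with a good one preserves goodness. The rest is bookkeeping: the passage from the coderivation $\sf N$ to its coefficient ${\sf N}_2$, and the fact that the restricted $e^{\sf N}$ is genuinely an $L_\infty[1]$-morphism for the sign-twisted brackets, both of which are formal consequences of Lemma~\ref{lem:expfits} and Prop.~\ref{prop:good_L_infty_algebra}.
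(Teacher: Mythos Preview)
Your proof is correct and follows the same overall reduction as the paper: both argue that it suffices to show ${\sf N}_2$ maps pairs of good multivector fields to good multivector fields, and both hinge on the key input from Lemma~\ref{lem:ksi} that $\xi$ has no $\wedge^2 T^\ast\calF$-component.

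The execution, however, is organized differently. The paper introduces the bi-grading $\frakX^{(p,q)}(M)=\Gamma(\wedge^p T\calF\otimes\wedge^q G_0)$ coming from $TM=T\calF\oplus G_0$, observes that good multivector fields are exactly those with $q\le 1$, and then tracks how the bi-degrees of $Q_1,Q_2$ and of $\xi\in\Gamma(G_0^*\otimes T^\ast\calF\oplus\wedge^2 G_0^*)$ combine under $(Q_1^\sharp\wedge Q_2^\sharp)\xi$: the $G_0$-degree of the result is $i+j-1$ or $i+j-2$ with $i,j\in\{0,1\}$, hence at most $1$. Your argument instead stays coordinate-free, recording the closure facts (a)--(c) about contractions and wedge products and writing $\xi=\sum_a f^a\wedge\theta^a$ with $f^a\in\Gamma(T\calF^0)$. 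Your approach has the mild advantage of not singling out the complement $G_0$ in the bookkeeping (only the intrinsic condition $\xi|_{\wedge^2 T\calF}=0$ is used), while the paper's bi-grading argument is slightly more mechanical and makes the degree count transparent at a glance. Both are equally short and rest on the same geometric content.
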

\begin{proof}
We show that ${\sf N}$   preserves ${\sf S}(\mathfrak{X}_{\good}^\bullet(M)[2])$. Recall that, like any   coderivation of ${\sf S}(\mathfrak{X}^\bullet(M)[2])$,
  ${\sf N}$ is determined by its Taylor coefficients through the formula \eqref{eq:codercoeff}.
Since ${\sf N}_{2}$ is the only nonzero Taylor coefficient of ${\sf N}$ (see  Lemma \ref{lem:M2}), it suffices to show that for $Q_1\in\frakX_{\good}^{k}(M)$ and $Q_2\in\frakX_{\good}^{l}(M)$, we have
\[
{\sf N}_2(Q_1\odot Q_2)\in\frakX_{\good}^{k+l-2}(M).
\]
To this end, recall that the decomposition $TM=T\calF\oplus G_0$ endows the graded algebra $\mathfrak{X}^{\bullet}(M)$ with a bi-grading, defined as follows:	
\[
\mathfrak{X}^{\bullet}(M)=\bigoplus_{p,q\ge 0}\mathfrak{X}^{(p,q)}(M),\ \text{where}\ \mathfrak{X}^{(p,q)}(M):=\Gamma(\wedge^{p}T\calF\oplus\wedge^{q}G_0).
\]
In terms of this bi-grading, the space of good multivector fields is given by the following:
\[
\mathfrak{X}_{\good}^{\bullet}(M)=\bigoplus_{\substack{p\ge 0\\q=0,1}}\mathfrak{X}^{(p,q)}(M).
\]
The multivector fields $Q_1$ and $Q_2$ have {only components in} bi-degree $(k-i,i)$ and $(l-j,j)$ respectively, where $i,j\in\{0,1\}$. By Lemma \ref{lem:M2}, we know that
\begin{equation}\label{eq:bidegrees}
{\sf N}_2(Q_1\odot Q_2)=(-1)^{k}(Q_1^{\sharp}\wedge Q_2^{\sharp})\xi,
\end{equation}
and since the two-form $\xi$ belongs to $\Gamma(G_0^{*}\otimes T^{*}\calF\oplus\wedge^{2}G_{0}^{*})$ {by Lemma \ref{lem:ksi}}, the $(k+l-2)$-vector field \eqref{eq:bidegrees} has a component in bi-degree $(k-i+l-j-1,i+j-1)$ and a component in bi-degree $(k-i+l-j,i+j-2)$. Since $i+j-1$ and $i+j-2$ are at most equal to $1$, this shows that ${\sf N}_2(Q_1\odot Q_2)$ belongs to $\frakX_{\good}^{k+l-2}(M)$.
\end{proof}

Let $\tau$ be the transposition of $\frakX_{\good}^\bullet(M)[2]$ that flips the sign, which yields strict $L_{\infty}[1]$-isomorphisms 
\[
\tau:\big(\frakX_{\good}^\bullet(M)[2],\frakl^{G_i}_1,\frakl^{G_i}_2,\frakl^{G_i}_3\big)\rightarrow \big(\frakX_{\good}^\bullet(M)[2],\frakl^{G_i}_1,-\frakl^{G_i}_2,\frakl^{G_i}_3\big):Q\mapsto -Q.
\]
Pre- and postcomposing the map $e^{{\sf N}}$ from Prop.~\ref{prop:subalgebras} with $\tau$, we obtain the main result of this section.

\begin{corollary}\label{cor:preservessubalg}
The $L_{\infty}[1]$-algebra $(\frakX_{\calF}^\bullet(M)[2],\{\frakl^G_k\})$ associated with a symplectic foliation $(\calF,\omega)$ does not depend on the choice of complement $G$ to the characteristic distribution $T\calF$. Indeed, if $G_0$ and $G_1$ are two complements, then we have a canonical $L_{\infty}[1]$-isomorphism
\[
\tau\circ e^{{\sf N}}\circ\tau:\big(\frakX_{\good}^\bullet(M)[2],\frakl^{G_0}_1,\frakl^{G_0}_2,\frakl^{G_0}_3\big)\rightarrow\big(\frakX_{\good}^\bullet(M)[2],\frakl^{G_1}_1,\frakl^{G_1}_2,\frakl^{G_1}_3\big).
\]
\end{corollary}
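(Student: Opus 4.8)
The plan is to assemble the pieces already in place rather than to prove anything substantially new: all of the analytic and combinatorial content has been extracted in Lemma~\ref{lem:eta}, Lemma~\ref{lem:ksi}, Lemma~\ref{lem:expfits} and Proposition~\ref{prop:subalgebras}, and the corollary is just the ``untwisted'' repackaging. Concretely, Proposition~\ref{prop:subalgebras} furnishes the $L_\infty[1]$-isomorphism $e^{{\sf N}}$ between the good subalgebras equipped with the \emph{twisted} multibrackets $\big(\frakl^{G_i}_1,-\frakl^{G_i}_2,\frakl^{G_i}_3\big)$, so the only remaining task is to conjugate away the sign twist by the transposition $\tau\colon Q\mapsto -Q$.

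First I would recall that $\tau$ is a strict $L_\infty[1]$-isomorphism $\big(\frakX_{\good}^\bullet(M)[2],\frakl^{G_i}_1,\frakl^{G_i}_2,\frakl^{G_i}_3\big)\to\big(\frakX_{\good}^\bullet(M)[2],\frakl^{G_i}_1,-\frakl^{G_i}_2,\frakl^{G_i}_3\big)$, as already noted just before the statement; this follows from the d\'ecalage sign rule (cf. Rem.~\ref{rem:decalage}), since rescaling a degree-shifted graded vector space by $-1$ intertwines a degree $+1$ $k$-ary bracket $\frakl_k$ with $(-1)^{k-1}\frakl_k$, hence fixes $\frakl_1$ and $\frakl_3$ and flips $\frakl_2$. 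Note also that $\tau^2=\id$, so $\tau$ is its own inverse and both uses of $\tau$ (pre- and post-composition) are legitimate $L_\infty[1]$-isomorphisms. Since a composite of $L_\infty[1]$-isomorphisms is again one, $\tau\circ e^{{\sf N}}\circ\tau$ is an $L_\infty[1]$-isomorphism $\big(\frakX_{\good}^\bullet(M)[2],\frakl^{G_0}_1,\frakl^{G_0}_2,\frakl^{G_0}_3\big)\to\big(\frakX_{\good}^\bullet(M)[2],\frakl^{G_1}_1,\frakl^{G_1}_2,\frakl^{G_1}_3\big)$, which is precisely the asserted map. For the canonicity/independence statement I would simply observe that $\eta$ (Lemma~\ref{lem:eta}), hence $\xi$ (Lemma~\ref{lem:ksi}) and the coderivation ${\sf N}$ of \eqref{eq:Ncoeff}, are manufactured from $\Pi$, $G_0$ and $G_1$ by explicit formulas involving no further auxiliary choices; therefore $\tau\circ e^{{\sf N}}\circ\tau$ is canonically attached to the ordered pair $(G_0,G_1)$, and in particular the isomorphism class of $(\frakX_{\good}^\bullet(M)[2],\{\frakl^G_k\})$ — together with its cohomology $H^\bullet_{\calF}(M,\Pi)$ — depends only on $(\calF,\omega)$.

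There is essentially no obstacle in this argument: the delicate step was isolating the coderivation ${\sf N}$ and checking it preserves ${\sf S}(\frakX_{\good}^\bullet(M)[2])$, which is Proposition~\ref{prop:subalgebras}. The single point that needs care is the sign bookkeeping in the $\tau$-conjugation — verifying that it toggles exactly $\frakl_2\leftrightarrow-\frakl_2$ while leaving the unary and ternary brackets untouched — and this is precisely why the twisted brackets were carried along in Theorem~\ref{theor:GMS}, Lemma~\ref{lem:expfits} and Proposition~\ref{prop:subalgebras} in the first place, so no new computation is required.
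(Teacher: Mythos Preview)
Your proposal is correct and follows essentially the same approach as the paper: the paper also simply invokes Proposition~\ref{prop:subalgebras} for $e^{\sf N}$ on the good subalgebras with the twisted brackets, observes (in the paragraph immediately preceding the corollary) that $\tau$ is a strict $L_\infty[1]$-isomorphism toggling the sign of the binary bracket, and then states the corollary as the composite $\tau\circ e^{\sf N}\circ\tau$. Your write-up is slightly more explicit in justifying the $(-1)^{k-1}$ sign behavior of $\tau$ and in noting the canonicity, but the argument is the same.
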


	\section{Equivalences of Regular Poisson Structures}
	\label{sec:gauge_equivalence}
	
	This section addresses equivalences of deformations of a given regular Poisson structure $\Pi$ on $M$.  We show that the geometric notion of equivalence given by isotopies agrees with the algebraic notion of gauge equivalence obtained from the  {$L_{\infty}[1]$-algebra $(\frakX_\good^\bullet(M)[2],\{\frakl^G_k\})$ that governs the deformation problem of $\Pi$ (see Thm.~\ref{theor:deformation_theory regular_Poisson}). {We discuss the resulting moduli space of regular Poisson structures in an example, showing that it is not smooth in general.}

 \subsection{Gauge equivalence}
 {Our approach consists of relating the algebraic notion of gauge equivalence on $M$ with MC elements on the product manifold $M\times[0,1]^2$, and then -- on this product  -- of relating the MC elements of the $L_{\infty}[1]$-algebra 
introduced in Prop. \ref{prop:Koszul_algebra} with those of the Koszul dgL[1]a. This approach is analog to the one taken in \cite{SZequivalences}.}
	 We first recall a Poisson version of Moser's theorem, {see \cite[Thm. 9.48]{lectures} for a proof}.

	\begin{lemma}\label{moser}
	 Let $M$ be a compact manifold, $(\Pi_t)_{t\in[0,1]}$ a smooth path of Poisson structures on $M$ and $\phi_t$ an isotopy with time-dependent vector field $(Y_t)_{t\in[0,1]}$. Then $\Pi_t$ is generated by $\phi_t$,
	 i.e.
	 \[
	 \Pi_t=(\phi_t)_{*}\Pi_{0}
	 \]
	 exactly when
	 \[
	 \frac{d}{dt}\Pi_{t}=d_{\Pi_{t}}Y_{t}.
	 \]
	\end{lemma}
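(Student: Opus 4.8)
The plan is to run the classical Moser argument, the only Poisson-specific ingredient being the translation of the Lichnerowicz differential applied to a vector field into a Lie derivative. First I would fix conventions: since $\Pi_t$ is a bivector field and $Y_t$ a vector field, graded antisymmetry of the Schouten-Nijenhuis bracket gives
\[
d_{\Pi_t}Y_t=[\Pi_t,Y_t]_{SN}=-\calL_{Y_t}\Pi_t .
\]
Next I would invoke the standard formula for the derivative of a pullback along an isotopy: if $\phi_t$ is the isotopy with $\phi_0=\id$ generated by the time-dependent vector field $Y_t$, then for any smooth path of tensor fields $T_t$ one has
\[
\frac{d}{dt}\big(\phi_t^{*}T_t\big)=\phi_t^{*}\Big(\calL_{Y_t}T_t+\frac{d}{dt}T_t\Big).
\]
(Compactness of $M$ guarantees that the flow of $Y_t$ is defined on all of $[0,1]$; in any case this is built into the hypothesis that $\phi_t$ is an isotopy.)

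The heart of the matter is the observation that $\Pi_t=(\phi_t)_{*}\Pi_0$ for all $t$ is equivalent to $\phi_t^{*}\Pi_t=\Pi_0$ for all $t$, since $\phi_t^{*}\circ(\phi_t)_{*}=\id$. For the forward implication, differentiating $\phi_t^{*}\Pi_t=\Pi_0$ and applying the flow formula with $T_t=\Pi_t$ yields $\phi_t^{*}\big(\calL_{Y_t}\Pi_t+\tfrac{d}{dt}\Pi_t\big)=0$; since $\phi_t^{*}$ is invertible, this gives $\tfrac{d}{dt}\Pi_t=-\calL_{Y_t}\Pi_t=d_{\Pi_t}Y_t$. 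For the converse, if $\tfrac{d}{dt}\Pi_t=d_{\Pi_t}Y_t=-\calL_{Y_t}\Pi_t$, then the same formula shows $\tfrac{d}{dt}\big(\phi_t^{*}\Pi_t\big)=0$, so $\phi_t^{*}\Pi_t$ is independent of $t$ and hence equals its value $\Pi_0$ at $t=0$; applying $(\phi_t)_{*}$ gives $\Pi_t=(\phi_t)_{*}\Pi_0$.

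I do not expect any substantial obstacle: the argument is entirely formal and in fact never uses that the $\Pi_t$ are Poisson, only that they are bivector fields --- though the statement is of interest precisely in the Poisson setting, where $\tfrac{d}{dt}\Pi_t$ is then automatically a $d_{\Pi_t}$-cocycle. The one point that requires attention is the bookkeeping of signs in the two displayed identities above, which I would verify against the conventions fixed in \S\ref{sec:one}.
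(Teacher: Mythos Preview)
Your argument is correct and is precisely the standard Moser trick; the paper does not give its own proof but simply cites \cite[Thm.~9.48]{lectures}, whose argument is the one you wrote down. The sign check $d_{\Pi_t}Y_t=[\Pi_t,Y_t]_{SN}=-\calL_{Y_t}\Pi_t$ is consistent with the conventions used in the paper.
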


\noindent
If $\Pi_{t}$ is a smooth path of Poisson structures, then $\frac{d}{dt}\Pi_{t}$ defines a class in the second Poisson cohomology group $H^{2}_{\Pi_{t}}(M)$, since
		\begin{equation*}
			0=\tfrac{d}{dt}[\Pi_{t},\Pi_{t}]_{SN}=2\Big[\Pi_{t},\frac{d}{dt}\Pi_{t}\Big]_{SN}.
		\end{equation*}
Hence, Lemma~\ref{moser} requires that these classes vanish and that there exists a smooth path of primitives.

\bigskip
		
		In the following, we fix a regular Poisson structure $\Pi$ {on a manifold $M$}  with characteristic distribution $T\calF$, and we choose a splitting $TM=T\calF\oplus G$. 
		We first specialize the general notion of gauge equivalence of MC elements (see Def.~\ref{def:gauge}) to the    $L_{\infty}[1]$-algebra   $(\frakX^\bullet(M)[2],\{\frakl^G_k\})$ introduced in Prop.~\ref{prop:Koszul_algebra}.
		Recall that the MC equation (see Def.~\ref{def:mc}) for $W\in\mathfrak{X}^{2}(M)$ reads 
		\begin{equation}\label{eq:mceq}
			[\Pi,W]_{SN}+\frac{1}{2}[W,W]_{\gamma}{-}\frac{1}{6}(W^\sharp\wedge W^\sharp\wedge W^\sharp)\Upsilon^G_{TM}=0. 
		\end{equation}
			
		\begin{definition}
				Two MC elements  $W_{0},W_{1}$ of $\left(\mathfrak{X}^{\bullet}(M)[2],\frakl^G_{1},\frakl^G_{2},\frakl^G_{3}\right)$  are \textit{gauge equivalent} 
				if there is  a smooth family of MC elements $\left(W_{t}\right)_{t\in[0,1]}$ interpolating between them and  a smooth family $\left(X_{t}\right)_{t\in[0,1]}$ in $\mathfrak{X}(M)$ such that
				\begin{align}\label{mc}
					\frac{d}{dt}W_{t}&=\frakl^G_{1}(X_{t})+\frakl^G_{2}(X_{t},W_{t})+\frac{1}{2}\frakl^G_{3}(X_{t},W_{t},W_{t})\nonumber\\
					&=[\Pi,X_{t}]_{SN}-[X_{t},W_{t}]_{\gamma}{-}\frac{1}{2}\left(X_{t}^{\sharp}\wedge W_{t}^{\sharp}\wedge W_{t}^{\sharp}\right)\Upsilon^G_{TM}.
				\end{align}
		\end{definition}

		The next lemma allows us to phrase the gauge equivalence condition as a MC equation on a higher dimensional manifold. 
		Consider the manifold $M\times  I^2$, where $I=[0,1]$, endowed with the
		regular Poisson structure $\widetilde{\Pi}$ obtained as  the trivial lift of $\Pi$.
		Denote by $(t,s)$ the coordinates on $I^2$. The characteristic distribution $T\widetilde{\calF}$ of $\widetilde{\Pi}$ is just $T\calF$, and as a complement we can take   $\widetilde{G}=G\oplus \mathbb{R}\partial_s\oplus \mathbb{R}\partial_t$. Consequently, denoting by $p\colon M\times  I^2 \to M$ the projection, we have
		$\widetilde{\gamma}=p^*\gamma$ and  $\Upsilon^{\widetilde{G}}_{T(M\times I^2)}=p^*\Upsilon^G_{TM}$. 
		
		\begin{lemma}\label{lem:square}
			Let $\left(W_{t}\right)_{t\in[0,1]}$ be  smooth family in $\mathfrak{X}^2(M)$
			and  $\left(X_{t}\right)_{t\in[0,1]}$ a smooth family  in $\mathfrak{X}(M)$. 
			Then the $W_{t}$ are MC elements  of $(\frakX^\bullet(M)[2],\{\frakl^G_k\})$ 
			satisfying eq. \eqref{mc} if{f}
			\begin{equation}\label{eq:wtWt}
				\widetilde{W}_t:=W_t+(\partial_t+X_t)\wedge \partial_s
			\end{equation}
			is a MC element of $(\frakX^\bullet(M\times I^2)[2],\{\frakl^{\widetilde{G}}_{k}\})$, i.e.
			\begin{equation}\label{eq:mceqsquare}
				[\widetilde{\Pi},\widetilde{W}_t]_{SN}+\frac{1}{2}[\widetilde{W}_t,\widetilde{W}_t]_{\widetilde{\gamma}}
			{-}\frac{1}{6}(\widetilde{W}_t^\sharp\wedge \widetilde{W}_t^\sharp\wedge\widetilde{W}_t^\sharp)\Upsilon^{\widetilde{G}}_{T(M\times I^2)}=0. 
			\end{equation}
		\end{lemma}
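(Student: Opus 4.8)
The plan is to substitute $\widetilde W_t=W_t+(\partial_t+X_t)\wedge\partial_s$ into the Maurer--Cartan equation \eqref{eq:mceqsquare} and decompose the resulting trivector on $M\times I^2$ according to the number of $\partial_s$-factors it contains. Since $\widetilde\Pi=p^*\Pi$, $\widetilde\gamma=p^*\gamma$ and $\Upsilon^{\widetilde G}_{T(M\times I^2)}=p^*\Upsilon^G_{TM}$ are all pulled back from $M$ (so they have no $\partial_s$, no $ds$, no $\partial_t$ and no $dt$), while $\widetilde W_t$ is $s$-independent and at most linear in $\partial_s$, the left-hand side of \eqref{eq:mceqsquare} can be written as $\calA+\calB\wedge\partial_s$ with $\calA\in\frakX^3(M)$ and $\calB\in\frakX^2(M)$ both $s$-independent (but $t$-dependent). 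Because $\partial_s$ is nowhere zero, $\calA+\calB\wedge\partial_s$ vanishes iff $\calA=0$ and $\calB=0$, so the lemma reduces to identifying $\calA$ with the left-hand side of the Maurer--Cartan equation \eqref{eq:mceq} for $W_t$, and $\calB$ with $\frac{d}{dt}W_t-\frakl_1^G(X_t)-\frakl_2^G(X_t,W_t)-\frac12\frakl_3^G(X_t,W_t,W_t)$ up to an overall sign, which is exactly the defining relation \eqref{mc}.

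For the $\partial_s$-free part I would use that $[\widetilde\Pi,-]_{SN}$, $[-,-]_{\widetilde\gamma}$ and the trilinear contraction against $\Upsilon^{\widetilde G}$ all preserve the $\partial_s$-degree and restrict on the $M$-directions to the corresponding operations built from $\Pi$, $\gamma$ and $\Upsilon^G$; this is because $\widetilde\Pi^\sharp$, $\pr_{\widetilde G}$ and $\widetilde\gamma^\flat$ agree with $\Pi^\sharp$, $\pr_G$ and $\gamma^\flat$ on $T^*M$ resp.\ $TM$. Hence $\calA=[\Pi,W_t]_{SN}+\frac12[W_t,W_t]_\gamma-\frac16(W_t^\sharp\wedge W_t^\sharp\wedge W_t^\sharp)\Upsilon^G_{TM}$, i.e.\ precisely \eqref{eq:mceq} for $W_t$.

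For the part linear in $\partial_s$, the three terms of \eqref{eq:mceqsquare} contribute as follows. From $[\widetilde\Pi,\widetilde W_t]_{SN}$ one gets $[\Pi,X_t]_{SN}\wedge\partial_s$, using $[\widetilde\Pi,\partial_t]_{SN}=-\calL_{\partial_t}\widetilde\Pi=0$ and $[\widetilde\Pi,\partial_s]_{SN}=0$. From $\frac12[\widetilde W_t,\widetilde W_t]_{\widetilde\gamma}$ one gets the cross term $[W_t,(\partial_t+X_t)\wedge\partial_s]_{\widetilde\gamma}=[W_t,\partial_t]_{\widetilde\gamma}\wedge\partial_s+[W_t,X_t]_{\widetilde\gamma}\wedge\partial_s$, the brackets with $\partial_s$ being zero; the step needing care here is the identity $[\partial_t,-]_{\widetilde\gamma}=\frac{d}{dt}$ on $t$-dependent multivector fields on $M$, which by the derivation property of the almost-Gerstenhaber bracket reduces to the case of a vector field $Y$, where \eqref{eq:lem:Koszul_algebra:almost_Lie_algbd_structure} together with $\iota_{\partial_t}\widetilde\gamma=0$, $\widetilde\gamma^\flat=\omega^\flat\circ\pr_{T\calF}$ and $\Pi^\sharp\circ\omega^\flat=-\text{Id}_{T\calF}$ gives $[\partial_t,Y]_{\widetilde\gamma}=\pr_G\dot Y-\Pi^\sharp(\iota_{\dot Y}\widetilde\gamma)=\pr_G\dot Y+\pr_{T\calF}\dot Y=\dot Y$. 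Finally, from $-\frac16(\widetilde W_t^\sharp\wedge\widetilde W_t^\sharp\wedge\widetilde W_t^\sharp)\Upsilon^{\widetilde G}$ the part linear in $\partial_s$ arises by letting exactly one of the three copies of $\widetilde W_t^\sharp$ produce a $\partial_s$; by the total antisymmetry of $\Upsilon^G$ the three resulting terms coincide and combine to $-\frac12(X_t^\sharp\wedge W_t^\sharp\wedge W_t^\sharp)\Upsilon^G_{TM}\wedge\partial_s$. Comparing the three contributions with $\frakl_1^G,\frakl_2^G,\frakl_3^G$ as in Prop.~\ref{prop:Koszul_algebra} then yields the claimed description of $\calB$, and the lemma follows.

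The only real difficulty is bookkeeping of the Koszul signs --- in the Schouten bracket, in the almost-Gerstenhaber bracket $[-,-]_{\widetilde\gamma}$, in the trilinear contraction with $\Upsilon^{\widetilde G}$, and in commuting the spurious $\partial_s$ to the rightmost slot --- together with the consistent use of the conventions $\frakl_2^G(P,Q)=(-)^{|P|}[P,Q]_\gamma$ and $\frakl_3^G(P,Q,R)=-(-1)^{|Q|}(P^\sharp\wedge Q^\sharp\wedge R^\sharp)\Upsilon^G_{TM}$. No conceptual obstacle remains once the $\partial_s$-degree decomposition is set up; alternatively, the statement can be obtained by running the constructions of Section~\ref{sec:one} on $M\times I^2$ and invoking their naturality under the projection $p$, but the direct computation is the more transparent route.
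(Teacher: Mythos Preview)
Your proposal is correct and follows essentially the same route as the paper: both expand the Maurer--Cartan expression \eqref{eq:mceqsquare} term by term and separate the result according to the $\partial_s$-degree, identifying the $\partial_s$-free part with \eqref{eq:mceq} and the coefficient of $\partial_s$ with \eqref{mc}. The key technical step---verifying $[\partial_t,Y]_{\widetilde\gamma}=\dot Y$ via \eqref{eq:lem:Koszul_algebra:almost_Lie_algbd_structure}, $\iota_{\partial_t}\widetilde\gamma=0$ and $\Pi^\sharp\circ\omega^\flat=-\id_{T\calF}$---is carried out in exactly the same way in the paper; the only difference is that the paper also writes out explicitly the vanishing of the self-bracket $[(\partial_t+X_t)\wedge\partial_s,(\partial_t+X_t)\wedge\partial_s]_{\widetilde\gamma}$, which you subsume under ``the brackets with $\partial_s$ being zero''.
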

		\begin{proof}
			Expanding eq. \eqref{eq:mceqsquare}, the terms not containing 
			$\partial_s$ give exactly the MC equation \eqref{eq:mceq} for $W_t$, and
			the terms  containing 
			$\partial_s$ give exactly the relation  \eqref{mc}. To see that this is the case, let us investigate each term in the left hand side of \eqref{eq:mceqsquare} separately.
			\begin{itemize}[leftmargin=0.2in]
			\item First, it is clear that
			    \[
			    [\widetilde{\Pi},\widetilde{W}_t]_{SN}=[\Pi,W_t]_{SN}+[\Pi,X_t]_{SN}\wedge\partial_s.
			    \]
		    \item Second, we have
			    \begin{align}\label{eq:secondterm}
			    \hspace{-0.05cm}[\widetilde{W}_t,\widetilde{W}_t]_{\widetilde{\gamma}}&=[W_t,W_t]_{\widetilde{\gamma}}+2[W_t,(\partial_t+X_t)\wedge\partial_s]_{\widetilde{\gamma}}+[(\partial_t+X_t)\wedge\partial_s,(\partial_t+X_t)\wedge\partial_s]_{\widetilde{\gamma}}\nonumber\\
			    &=[W_t,W_t]_{\widetilde{\gamma}}+2[W_t,\partial_t+X_t]_{\widetilde{\gamma}}\wedge\partial_s-2(\partial_t+X_t)\wedge[W_t,\partial_s]_{\widetilde{\gamma}}+2[\partial_t+X_t,\partial_s]_{\widetilde{\gamma}}\wedge(\partial_t+X_t)\wedge\partial_s.
			    \end{align}
			    We now make a few observations. For time-dependent vector fields $U_t,V_t\in\mathfrak{X}(M)$, we have
			    \begin{align*}
			 [U_t,V_t]_{\widetilde{\gamma}}&=[\pr_{\widetilde{G}}U_t,\pr_{\widetilde{G}}V_t]-\widetilde{\Pi}^{\sharp}\big(\calL_{\pr_{\widetilde{G}}U_t}\iota_{V_t}\widetilde{\gamma}-\calL_{\pr_{\widetilde{G}}V_t}\iota_{U_t}\widetilde{\gamma}\big)\\
			 &=[\pr_{G}U_t,\pr_{G}V_t]-\Pi^{\sharp}\big(\calL_{\pr_{G}U_t}\iota_{V_t}\gamma-\calL_{\pr_{G}V_t}\iota_{U_t}\gamma\big)\\
			 &=[U_t,V_t]_{\gamma},
			 \end{align*}
			 and by the Leibniz rule, this equality also holds for time-dependent bivector fields. Next, for a time-dependent vector field $V_t\in\mathfrak{X}(M)$, we have
			 \begin{align*}
			 [\partial_t,V_t]_{\widetilde{\gamma}}&=[\partial_t,\pr_{G}V_t]-\widetilde{\Pi}^{\sharp}\big(\calL_{\partial_t}\iota_{V_t}\widetilde{\gamma}-\calL_{\pr_{G}V_t}\iota_{\partial_t}\widetilde{\gamma}\big)\\
			 &=\pr_G\left(\frac{d}{dt}V_t\right)-\Pi^{\sharp}\left(\gamma^{\flat}\left(\frac{d}{dt}V_t\right)\right)\\
			 &=\pr_G\left(\frac{d}{dt}V_t\right)+\pr_{T\calF}\left(\frac{d}{dt}V_t\right)\\
			 &=\frac{d}{dt}V_t,
			 \end{align*}
			 and again by the Leibniz rule, this equality also holds for time-dependent bivector fields $V_t\in\mathfrak{X}^{2}(M)$. At last, it is clear that
			 \[
			 [\partial_s,V_t]_{\widetilde{\gamma}}=0
			 \]
			 for all time-dependent vector fields $V_t\in\mathfrak{X}(M)$. Using these observations, the equality \eqref{eq:secondterm} becomes
			 \[
			 [\widetilde{W}_t,\widetilde{W}_t]_{\widetilde{\gamma}}=[W_t,W_t]_{\gamma}-2\left(\frac{d}{dt}W_t+[X_t,W_t]_{\gamma}\right)\wedge\partial_s.
			 \]
			\item Third, we get
			\begin{align*}
			(\widetilde{W}_t^\sharp\wedge \widetilde{W}_t^\sharp\wedge\widetilde{W}_t^\sharp)\Upsilon^{\widetilde{G}}_{T(M\times I^2)}&=\big(W_t^{\sharp}+X_t^{\sharp}\wedge\partial_s\big)\wedge\big(W_t^{\sharp}+X_t^{\sharp}\wedge\partial_s\big)\wedge\big(W_t^{\sharp}+X_t^{\sharp}\wedge\partial_s\big)p^*\Upsilon^G_{TM}\\
			&=(W_t^{\sharp}\wedge W_t^{\sharp}\wedge W_t^{\sharp})\Upsilon^G_{TM}+3(X_t^{\sharp}\wedge W_t^{\sharp}\wedge W_t^{\sharp})\Upsilon^G_{TM}\wedge\partial_s.
			\end{align*}
			\end{itemize}
			Hence, collecting the terms that contain $\partial_s$ and those that don't, the equality \eqref{eq:mceqsquare}
			is equivalent with
			\[
			\begin{cases}
			[\Pi,W_t]_{SN}+\frac{1}{2}[W_t,W_t]_{\gamma}{-}\frac{1}{6}(W_t^{\sharp}\wedge W_t^{\sharp}\wedge W_t^{\sharp})\Upsilon^G_{TM}=0\\
			[\Pi,X_t]_{SN}-\left(\frac{d}{dt}W_t+[X_t,W_t]_{\gamma}\right){-}\frac{1}{2}(X_t^{\sharp}\wedge W_t^{\sharp}\wedge W_t^{\sharp})\Upsilon^G_{TM}=0
			\end{cases}.
			\]
			The first equality says that the $W_t$ are MC elements of $(\frakX^\bullet(M)[2],\{\frakl^G_k\})$ {by Equation \eqref{eq:mceq}}, while the second equality is exactly the evolution equation \eqref{mc} from the gauge equivalence. This finishes the proof.
		\end{proof}
		
		Let $(\mathfrak{X}^{\bullet}(M)[2],\frakm_1,\frakm_2)$ be the dgL[1]a corresponding with the Koszul dgLa $(\mathfrak{X}^{\bullet}(M)[1],d_{\Pi},[-,-]_{SN})$ under the décalage isomorphism \eqref{eq:decalage}. Explicitly, we have for $P,Q\in\mathfrak{X}^{\bullet}(M)$:
		\begin{align*}
		&\frakm_1(P)=d_{\Pi}(P),\\
		&\frakm_2(P,Q)=(-1)^{|P|}[P,Q]_{SN}.
		\end{align*}
		Lemma \ref{lem:square} also holds for the dgL[1]a's $(\mathfrak{X}^{\bullet}(M)[2],\frakm_1,\frakm_2)$
		and $(\frakX^\bullet(M\times I^2)[2],\widetilde{\frakm_1}, \widetilde{\frakm_2})$, as shown below.

		\begin{lemma}\label{lem:squarebis}
		 Let $\left(W_{t}\right)_{t\in[0,1]}$ be  smooth family in $\mathfrak{X}^2(M)$ and  $\left(X_{t}\right)_{t\in[0,1]}$ a smooth family  in $\mathfrak{X}(M)$. 
			Then
			\begin{equation}\label{eq:koszul}
				\begin{cases}
	  	[\Pi,W_{t}]_{SN}+\frac{1}{2}[W_t,W_t]_{SN}=0\\
					\frac{d}{dt}W_{t}=[\Pi+W_t,X_{t}]_{SN} 
				\end{cases}
			\end{equation}
			if{f} 
			$\widetilde{W}_t:=W_t+(\partial_t+X_t)\wedge \partial_s$   is a MC element of $(\frakX^\bullet(M\times I^2)[2],\widetilde{\frakm_1}, \widetilde{\frakm_2})$.
		\end{lemma}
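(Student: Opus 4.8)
The plan is to follow the strategy of the proof of Lemma~\ref{lem:square}, now for the dgL[1]a $(\frakX^\bullet(M\times I^2)[2],\widetilde{\frakm_1},\widetilde{\frakm_2})$, which is considerably simpler because its only multibrackets are the Poisson differential $\widetilde{\frakm_1}=[\widetilde\Pi,-]_{SN}$ and $\widetilde{\frakm_2}(P,Q)=(-1)^{|P|}[P,Q]_{SN}$, with no ternary Courant-tensor term. In these terms the MC equation for $\widetilde W_t$ reads
\[
[\widetilde\Pi,\widetilde W_t]_{SN}+\tfrac12[\widetilde W_t,\widetilde W_t]_{SN}=0 ,
\]
and I would substitute $\widetilde W_t=W_t+(\partial_t+X_t)\wedge\partial_s$, expand using the graded Leibniz rule, and collect the resulting terms according to whether or not they contain the factor $\partial_s$. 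The assertion to be proved is that the $\partial_s$-free part is exactly the first line of \eqref{eq:koszul} and the coefficient of $\partial_s$ is exactly the second line.

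Before expanding I would record the elementary identities on $M\times I^2$ coming from the fact that $\widetilde\Pi$ and $W_t$ are trivial lifts: they are constant in $s$ and have no $\partial_s$- or $\partial_t$-component, so $[\widetilde\Pi,\partial_s]_{SN}=[\widetilde\Pi,\partial_t]_{SN}=0$, $[W_t,\partial_s]_{SN}=-\calL_{\partial_s}W_t=0$ and $[\partial_t,W_t]_{SN}=\calL_{\partial_t}W_t=\tfrac{d}{dt}W_t$; brackets of tensors lifted from $M$ reduce to the corresponding brackets on $M$ (so $[\widetilde\Pi,X_t]_{SN}=[\Pi,X_t]_{SN}$, $[\widetilde\Pi,W_t]_{SN}=[\Pi,W_t]_{SN}$, and $[W_t,W_t]_{SN}$ is the one on $M$); and $[\partial_t+X_t,\partial_s]=0$, since $X_t$ is $s$-independent.

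The three needed computations are then routine applications of the graded Leibniz rule and graded antisymmetry of $[-,-]_{SN}$:
\begin{itemize}
\item $[\widetilde\Pi,\widetilde W_t]_{SN}=[\Pi,W_t]_{SN}+[\widetilde\Pi,\partial_t+X_t]_{SN}\wedge\partial_s=[\Pi,W_t]_{SN}+[\Pi,X_t]_{SN}\wedge\partial_s$, using $[\widetilde\Pi,\partial_s]_{SN}=0$;
\item $\bigl[(\partial_t+X_t)\wedge\partial_s,(\partial_t+X_t)\wedge\partial_s\bigr]_{SN}=0$, because a decomposable bivector $A\wedge B$ has vanishing self-bracket whenever $[A,B]$ lies in the span of $A,B$ --- here even $[\partial_t+X_t,\partial_s]=0$;
\item $[W_t,(\partial_t+X_t)\wedge\partial_s]_{SN}=[W_t,\partial_t+X_t]_{SN}\wedge\partial_s=-\bigl(\tfrac{d}{dt}W_t+[X_t,W_t]_{SN}\bigr)\wedge\partial_s$, using $[W_t,\partial_s]_{SN}=0$ together with $[W_t,\partial_t]_{SN}=-\tfrac{d}{dt}W_t$ and $[W_t,X_t]_{SN}=-[X_t,W_t]_{SN}$.
\end{itemize}
Plugging these into the MC equation yields
\[
\Bigl([\Pi,W_t]_{SN}+\tfrac12[W_t,W_t]_{SN}\Bigr)+\Bigl([\Pi,X_t]_{SN}-\tfrac{d}{dt}W_t-[X_t,W_t]_{SN}\Bigr)\wedge\partial_s=0 .
\]
Since a multivector field on $M\times I^2$ of the form $A+B\wedge\partial_s$ with $A,B$ lifted from $M$ (no $\partial_s$-component) vanishes iff $A=0$ and $B=0$, this is equivalent to the system \eqref{eq:koszul}, after rewriting $[\Pi,X_t]_{SN}-[X_t,W_t]_{SN}=[\Pi+W_t,X_t]_{SN}$ in the second equation; note that the first equation is simply the statement that $\Pi+W_t$ is a Poisson bivector. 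There is no genuine obstacle here: the entire content is the Koszul-sign bookkeeping in the Leibniz rule, and it is lighter than in Lemma~\ref{lem:square} because $[-,-]_{SN}$ is the ordinary Schouten--Nijenhuis bracket and the $\Upsilon$-term is absent.
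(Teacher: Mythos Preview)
Your proof is correct and follows essentially the same approach as the paper: expand the MC equation $[\widetilde\Pi,\widetilde W_t]_{SN}+\tfrac12[\widetilde W_t,\widetilde W_t]_{SN}=0$ using the Leibniz rule and separate the terms with and without $\partial_s$. Your write-up is slightly more detailed in recording the elementary bracket identities and in justifying why the vanishing of $A+B\wedge\partial_s$ splits into $A=0$ and $B=0$, but the argument is the same.
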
	
		\begin{proof}
		The bivector field $\widetilde{W}_t$ is a MC element  of $(\frakX^\bullet(M\times I^2)[2],\widetilde{\frakm_1}, \widetilde{\frakm_2})$ if{f} the following vanishes:
		\begin{align*}
		&\widetilde{\frakm_1}(\widetilde{W}_t)+\frac{1}{2}\widetilde{\frakm_2}(\widetilde{W}_t,\widetilde{W}_t)\\\
		&=[\Pi,W_t]_{SN}+[\Pi,X_t]_{SN}\wedge\partial_s+\frac{1}{2}\big([W_t,W_t]_{SN}+2[W_t,(\partial_t+X_t)\wedge\partial_s]_{SN}+[(\partial_t+X_t)\wedge\partial_s,(\partial_t+X_t)\wedge\partial_s]_{SN}\big)\\
		&=[\Pi,W_t]_{SN}+[\Pi,X_t]_{SN}\wedge\partial_s+\frac{1}{2}[W_t,W_t]_{SN}-\frac{d}{dt}W_t\wedge\partial_s+[W_t,X_t]_{SN}\wedge\partial_s\\
		&=[\Pi,W_t]_{SN}+\frac{1}{2}[W_t,W_t]_{SN}+\left([\Pi+W_t,X_t]_{SN}-\frac{d}{dt}W_t\right)\wedge\partial_s.\qedhere
		\end{align*}
		\end{proof}

		We claim that the assignment $Z\to Z^{\gamma}$	provides a bijection between the following sets:
		\begin{itemize}
			\item MC elements of $(\frakX^\bullet(M)[2],\{\frakl^G_k\})$ lying in $\calI_{\gamma}$,
			\item MC elements of $(\frakX^\bullet(M)[2],\frakm_1, \frakm_2)$ lying  in $\calI_{-\gamma}$.
		\end{itemize}
		Recall that $Z^{\gamma}$ is defined so that its sharp-map (see \eqref{eq:gammasharp})} is 	
		\begin{equation}\label{eq:Zgamma}
			(Z^{\gamma})^{\sharp}=Z^{\sharp}\circ(\id_{T^{*}M}+\gamma^{\flat}\circ Z^{\sharp})^{-1}.
		\end{equation}
		To see that the above claim holds, note that for $Z\in\calI_{\gamma}$ one has
		\[
		Z\in MC(\frakX^\bullet(M)[2],\{\frakl^G_k\})\ \Leftrightarrow\ \exp_G(Z)=\Pi+Z^{\gamma}\ \text{is Poisson},
		\]
		as a consequence of the correspondence \eqref{eq:assignMC}. 
		In turn, the bivector field $\Pi+Z^{\gamma}$ being Poisson is equivalent with $Z^{\gamma}$ being MC in $(\frakX^\bullet(M)[2],\frakm_1, \frakm_2)$.
		We now apply this bijection on the regular Poisson manifold $(M\times  I^2,\widetilde{\Pi})$, restricting ourselves to bivector fields of the same form as in eq. \eqref{eq:wtWt}.
		
		\begin{lemma}\label{lem:matrices}
			The bijection  $\widetilde{Z}\to (\widetilde{Z})^{\widetilde{\gamma}}$ between MC elements restricts to a bijection between
			\begin{itemize}
				\item MC elements of $(\frakX^\bullet(M\times I^2)[2],\{\frakl^{\widetilde{G}}_k\})$ of the form $$\widetilde{W}_t:=W_t+(\partial_t+X_t)\wedge \partial_s$$ for smooth families 
				$\left(W_{t}\right)_{t\in[0,1]}$ in $\calI_{\gamma}$ and  $\left(X_{t}\right)_{t\in[0,1]}$  in $\mathfrak{X}(M)$,
				\item MC elements of $(\frakX^\bullet(M\times I^2)[2],\widetilde{\frakm_1}, \widetilde{\frakm_2})$ of the form $$\widehat{W}_t+(\partial_t+\widehat{X}_t)\wedge \partial_s$$ for smooth families 
				$\big(\widehat{W}_{t}\big)_{t\in[0,1]}$ in $\calI_{-\gamma}$ and  
				$\big(\widehat{X}_{t}\big)_{t\in[0,1]}$  in $\mathfrak{X}(M)$.
			\end{itemize}
			Explicitly, we have 
			\begin{equation}\label{eq:correspond}
				\widehat{W}_t={W}_{t}^{\gamma},\quad\quad\quad \widehat{X}_t=(\id_{T M}+{W_t^\sharp}{{}\circ{}}{\gamma^\flat})^{-1}X_t.
			\end{equation}
		\end{lemma}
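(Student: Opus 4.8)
The plan is to deduce the lemma from the bijection just established, transported to the product. Concretely, I would apply the assignment $Z\mapsto Z^{\gamma}$ — which identifies MC elements of $(\frakX^\bullet(M)[2],\{\frakl^G_k\})$ lying in $\calI_\gamma$ with MC elements of $(\frakX^\bullet(M)[2],\frakm_1,\frakm_2)$ lying in $\calI_{-\gamma}$ — to the regular Poisson manifold $(M\times I^2,\widetilde\Pi)$ with its complement $\widetilde G=G\oplus\bbR\partial_s\oplus\bbR\partial_t$. Since $\widetilde\gamma=p^*\gamma$ and $\Upsilon^{\widetilde G}_{T(M\times I^2)}=p^*\Upsilon^G_{TM}$, this immediately produces a bijection $\widetilde Z\mapsto\widetilde Z^{\widetilde\gamma}$ between the MC elements of $(\frakX^\bullet(M\times I^2)[2],\{\frakl^{\widetilde G}_k\})$ in $\calI_{\widetilde\gamma}$ and those of $(\frakX^\bullet(M\times I^2)[2],\widetilde{\frakm_1},\widetilde{\frakm_2})$ in $\calI_{-\widetilde\gamma}$. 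What remains is to check that this bijection carries the subset of MC elements of the special form $W_t+(\partial_t+X_t)\wedge\partial_s$ onto the analogous subset, and that on these it is given by \eqref{eq:correspond}. Because $(-)^{-\widetilde\gamma}$ is inverse to $(-)^{\widetilde\gamma}$ and is itself the gauge transform by $-\gamma$, the two inclusions are symmetric, so it suffices to prove one: if $(W_t)_{t\in[0,1]}$ is a smooth family in $\calI_\gamma$ and $(X_t)_{t\in[0,1]}$ a smooth family in $\frakX(M)$, then $\widetilde W_t:=W_t+(\partial_t+X_t)\wedge\partial_s$ lies in $\calI_{\widetilde\gamma}$ and $\widetilde W_t^{\widetilde\gamma}=W_t^\gamma+(\partial_t+\widehat X_t)\wedge\partial_s$ with $\widehat X_t=(\id_{TM}+W_t^\sharp\circ\gamma^\flat)^{-1}X_t$, noting that $W_t^\gamma\in\calI_{-\gamma}$ since $Z\mapsto Z^\gamma$ sends $\calI_\gamma$ to $\calI_{-\gamma}$.

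To carry this out, I would first record the three contractions $\widetilde W_t^\sharp(\alpha)=W_t^\sharp\alpha+\langle\alpha,X_t\rangle\,\partial_s$ for $\alpha\in T^*M$, $\widetilde W_t^\sharp(dt)=\partial_s$ and $\widetilde W_t^\sharp(ds)=-(\partial_t+X_t)$, and then exploit that, because $\widetilde\gamma^\flat$ annihilates $\partial_t,\partial_s$ and restricts to $\gamma^\flat$ on $TM$, the endomorphism $\id+\widetilde\gamma^\flat\widetilde W_t^\sharp$ of $T^*(M\times I^2)=T^*M\oplus\bbR\,dt\oplus\bbR\,ds$ is block upper-triangular: it preserves $T^*M$, acting there as $A:=\id+\gamma^\flat W_t^\sharp$, fixes $dt$, and sends $ds$ to $ds-\gamma^\flat X_t$. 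Hence it is invertible precisely when $W_t\in\calI_\gamma$ (so $\widetilde W_t\in\calI_{\widetilde\gamma}$), and its inverse again preserves $T^*M$, acting as $A^{-1}$, fixes $dt$, and sends $ds$ to $ds+A^{-1}\gamma^\flat X_t$. Composing with $\widetilde W_t^\sharp$ on the left then gives $(\widetilde W_t^{\widetilde\gamma})^\sharp(dt)=\partial_s$, $(\widetilde W_t^{\widetilde\gamma})^\sharp(\alpha)=(W_t^\gamma)^\sharp\alpha+\langle A^{-1}\alpha,X_t\rangle\,\partial_s$ for $\alpha\in T^*M$, and $(\widetilde W_t^{\widetilde\gamma})^\sharp(ds)=\widetilde W_t^\sharp(A^{-1}\gamma^\flat X_t)-(\partial_t+X_t)$. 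Since $\widetilde\gamma$ and $\widetilde W_t$ are both invariant under the flow of $\partial_s$, so is $\widetilde W_t^{\widetilde\gamma}$; together with $(\widetilde W_t^{\widetilde\gamma})^\sharp(dt)=\partial_s$ this already forces $\widetilde W_t^{\widetilde\gamma}$ to have the special form, and reduces the lemma to the fiberwise identities $W_t^\sharp(A^{-1}\gamma^\flat X_t)=X_t-\widehat X_t$, $\langle A^{-1}\alpha,X_t\rangle=\langle\alpha,\widehat X_t\rangle$ and $\langle A^{-1}\gamma^\flat X_t,X_t\rangle=0$, where $\widehat X_t:=(\id_{TM}+W_t^\sharp\gamma^\flat)^{-1}X_t$.

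The last step is pure linear algebra in a fiber. Writing $W:=W_t^\sharp$, $\Gamma:=\gamma^\flat$, the push--pull relation $W(\id+\Gamma W)^{-1}\Gamma=\id-(\id+W\Gamma)^{-1}$ — checked by multiplying on the left by $\id+W\Gamma$ — yields the first identity; skew-symmetry of $\gamma$ and of the bivector $W_t$ gives $(\gamma^\flat)^*=-\gamma^\flat$, $(W_t^\sharp)^*=-W_t^\sharp$, hence $A^*=\id+W_t^\sharp\gamma^\flat$, which gives the second identity and also $\langle A^{-1}\gamma^\flat X_t,X_t\rangle=\langle\gamma^\flat X_t,\widehat X_t\rangle=\gamma(X_t,\widehat X_t)$; this vanishes because $X_t=\widehat X_t+W_t^\sharp\gamma^\flat\widehat X_t$ and $\gamma(W_t^\sharp\gamma^\flat\widehat X_t,\widehat X_t)=-W_t(\gamma^\flat\widehat X_t,\gamma^\flat\widehat X_t)=0$. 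Plugging these back, the $\partial_s$-term of $(\widetilde W_t^{\widetilde\gamma})^\sharp(ds)$ disappears and its $M$-part equals $X_t-\widehat X_t$, so $(\widetilde W_t^{\widetilde\gamma})^\sharp(ds)=-(\partial_t+\widehat X_t)$, while $(\widetilde W_t^{\widetilde\gamma})^\sharp(\alpha)=(W_t^\gamma)^\sharp\alpha+\langle\alpha,\widehat X_t\rangle\partial_s$ for $\alpha\in T^*M$ identifies the $M$-part as $W_t^\gamma$; this is exactly \eqref{eq:correspond}. I expect the only genuinely delicate point to be the vanishing $\gamma(X_t,\widehat X_t)=0$: it is precisely what guarantees that the gauge transform of the special-form bivector again has coefficient $1$ in front of $\partial_t\wedge\partial_s$ and that the remaining terms reorganize into $W_t^\gamma+(\partial_t+\widehat X_t)\wedge\partial_s$; everything else is bookkeeping with the block-triangular form. (Alternatively the lemma could be read off by functoriality from the Dirac-geometric picture of $\exp_{\widetilde G}$ and the naturality of the construction under the projection $p$, but the direct computation above seems most transparent.)
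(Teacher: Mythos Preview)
Your proposal is correct and follows essentially the same approach as the paper: both compute $\widetilde W_t^{\widetilde\gamma}$ directly by exploiting that $\id+\widetilde\gamma^\flat\widetilde W_t^\sharp$ is block upper-triangular with respect to the splitting $T^*(M\times I^2)=T^*M\oplus\bbR\,dt\oplus\bbR\,ds$, invert it, and then verify the result has the desired form via the same linear-algebra identities (your push--pull relation $W(\id+\Gamma W)^{-1}\Gamma=\id-(\id+W\Gamma)^{-1}$ is exactly the paper's identity \eqref{eq:toshowinverse}, and your vanishing $\langle A^{-1}\gamma^\flat X_t,X_t\rangle=0$ is the paper's verification that the bottom-right matrix entry is zero). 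The only cosmetic differences are that the paper organizes the computation as a $3\times 3$ block matrix rather than tracking individual contractions, and that you invoke $\partial_s$-invariance and the symmetry under $\gamma\leftrightarrow-\gamma$ to streamline the bijection statement, whereas the paper simply checks all matrix entries.
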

		\begin{proof}
			We have to compute explicitly $\widetilde{W}_t^{\widetilde{\gamma}}$, which is given by an expression of the type as in eq. \eqref{eq:Zgamma}. We first express
			$\widetilde{W}_t^{\sharp}\colon T^*(M\times I^2)\to T(M\times I^2)$
			as a block matrix, w.r.t. the decomposition of the domain as the direct sum of $T^*M$ and the lines spanned by $dt$ and $ds$, and of the codomain as the direct sum of $TM$ and the lines spanned by $\partial_t$ and $\partial_s$. We  obtain
			\begin{equation}\label{eq:W_t}
			\widetilde{W}_t^{\sharp}=\left(\begin{array}{c|c|c}W_t^{\sharp} & 0 & -X_t \\\hline 0 & 0 & -1 \\\hline X_t^{\sharp} & 1 & 0\end{array}\right).
			\end{equation}
			It follows that $\id_{T^* (M\times I^2)}+{\widetilde{\gamma}^\flat}{{}\circ{}}{\widetilde{W}_t^\sharp}$ is an upper triangular matrix:
			\begin{align*}
			\id_{T^* (M\times I^2)}+{\widetilde{\gamma}^\flat}{{}\circ{}}{\widetilde{W}_t^\sharp}&=\left(\begin{array}{c|c|c}\id_{T^{*}M} & 0 & 0 \\\hline 0 & 1 & 0 \\\hline 0 & 0 & 1\end{array}\right) + \left(\begin{array}{c|c|c}\gamma^{\flat}& 0 & 0 \\\hline 0 & 0 & 0 \\\hline 0 & 0 & 0\end{array}\right)\left(\begin{array}{c|c|c}W_t^{\sharp} & 0 & -X_t \\\hline 0 & 0 & -1 \\\hline X_t^{\sharp} & 1 & 0\end{array}\right)\\
			&=\left(\begin{array}{c|c|c}\id_{T^{*}M}+\gamma^{\flat}\circ W_t^{\sharp} & 0 & -\gamma^{\flat}(X_t) \\\hline 0 & 1 & 0 \\\hline 0 & 0 & 1\end{array}\right),
			\end{align*}
			which is invertible if{f} its diagonal entry
			$\id_{T^* M}+{\gamma^\flat}{{}\circ{}}{W_t^\sharp}$ is invertible. In this case, the inverse is  
			\begin{equation}\label{eq:inverse}
			\big(\id_{T^* (M\times I^2)}+{\widetilde{\gamma}^\flat}{{}\circ{}}{\widetilde{W}_t^\sharp}\big)^{-1}=\left(\begin{array}{c|c|c} 
				(\id_{T^* M}+{\gamma^\flat}{{}\circ{}}{W_t^\sharp})^{-1} & 0 &  (\id_{T^* M}+{\gamma^\flat}{{}\circ{}}{W_t^\sharp})^{-1}(\gamma^\flat(X_t)) \\
				\hline 0 & 1 & 0 \\
				\hline 0 & 0 & 1\end{array}\right).
			\end{equation}
			The matrix $\widetilde{W}_t^{\widetilde{\gamma}}$ is given by the product of the matrices \eqref{eq:W_t} and \eqref{eq:inverse}:
			\begin{equation}\label{eq:Wtwist}
			\big(\widetilde{W}_t^{\widetilde{\gamma}}\big)^{\sharp}=\left(\begin{array}{c|c|c} 
				W_t^{\sharp}\circ(\id_{T^* M}+{\gamma^\flat}{{}\circ{}}{W_t^\sharp})^{-1} & 0 &  W_t^{\sharp}\big((\id_{T^* M}+{\gamma^\flat}{{}\circ{}}{W_t^\sharp})^{-1}(\gamma^\flat(X_t))\big)-X_t \\
				\hline 0 & 0 & -1 \\
				\hline X_t^{\sharp}\circ(\id_{T^* M}+{\gamma^\flat}{{}\circ{}}{W_t^\sharp})^{-1} & 1 & X_t^{\sharp}\big((\id_{T^* M}+{\gamma^\flat}{{}\circ{}}{W_t^\sharp})^{-1}(\gamma^{\flat}(X_t))\big)\end{array}\right).
			\end{equation}
		We now check that this matrix is indeed of the form \eqref{eq:W_t}, involving $\widehat{W}_t$ and $\widehat{X}_t$ as specified in \eqref{eq:correspond}.
		\begin{itemize}
		    \item Clearly, the top left entry equals $(\widehat{W}_t)^{\sharp}$, by \eqref{eq:Zgamma}.
		    \item We check that the bottom right entry is zero. First, it is  clear that
		    \[
		    \gamma^{\flat}\circ(\id_{T M}+{W_t^\sharp}{{}\circ{}}{\gamma^\flat})=(\id_{T^* M}+{\gamma^\flat}{{}\circ{}}{W_t^\sharp})\circ\gamma^{\flat}.
		    \]
		    Since $\id_{T^* M}+{\gamma^\flat}{{}\circ{}}{W_t^\sharp}$ is invertible,  its dual $\id_{T M}+{W_t^\sharp}{{}\circ{}}{\gamma^\flat}$ is also invertible, and we get
		    \begin{equation}\label{eq:touse}
		    (\id_{T^* M}+{\gamma^\flat}{{}\circ{}}{W_t^\sharp})^{-1}\circ\gamma^{\flat}=\gamma^{\flat}\circ(\id_{T M}+{W_t^\sharp}{{}\circ{}}{\gamma^\flat})^{-1}.
		    \end{equation}
		    Using this, we obtain that
		    \begin{align*}
		   \left\langle(\id_{T^* M}+{\gamma^\flat}{{}\circ{}}{W_t^\sharp})^{-1}(\gamma^{\flat}(X_t)),X_t\right\rangle&=\left\langle \gamma^{\flat}\big((\id_{T M}+{W_t^\sharp}{{}\circ{}}{\gamma^\flat})^{-1}(X_t)\big),X_t\right\rangle\\
		   &=-\left\langle X_t,(\id_{T^* M}+{\gamma^\flat}{{}\circ{}}{W_t^\sharp})^{-1}(\gamma^{\flat}(X_t))\right\rangle
		    \end{align*}
		     {must vanish. This} confirms that the bottom right entry in \eqref{eq:Wtwist} is indeed zero.
		    \item The top right entry in \eqref{eq:Wtwist} reads
		    \[
		    \big(W_t^{\sharp}\circ(\id_{T^* M}+{\gamma^\flat}{{}\circ{}}{W_t^\sharp})^{-1}\circ\gamma^{\flat}-\id_{TM}\big)(X_t),
		    \]
		    so we have to show that
		    \begin{equation}\label{eq:toshowinverse}
		    (\id_{T M}+{W_t^\sharp}{{}\circ{}}{\gamma^\flat})^{-1}=\id_{T M}-W_t^{\sharp}\circ(\id_{T^* M}+{\gamma^\flat}{{}\circ{}}{W_t^\sharp})^{-1}\circ\gamma^{\flat}.
		    \end{equation}
		    To this end, we argue similarly as in the previous bullet point. Since
		    \[
		    (\id_{T M}+{W_t^\sharp}{{}\circ{}}{\gamma^\flat})\circ W_t^{\sharp}=W_t^{\sharp}\circ(\id_{T^* M}+{\gamma^\flat}{{}\circ{}}{W_t^\sharp}),
		    \]
		    we have
		    \[
		    W_t^{\sharp}\circ(\id_{T^* M}+{\gamma^\flat}{{}\circ{}}{W_t^\sharp})^{-1}=(\id_{T M}+{W_t^\sharp}{{}\circ{}}{\gamma^\flat})^{-1}\circ W_t^{\sharp}.
		    \]
		    Using this, we get 
		    		    \begin{align*}
		    &(\id_{T M}+{W_t^\sharp}{{}\circ{}}{\gamma^\flat})\circ\big(\id_{T M}-W_t^{\sharp}\circ(\id_{T^* M}+{\gamma^\flat}{{}\circ{}}{W_t^\sharp})^{-1}\circ\gamma^{\flat}\big)\\
		    &\hspace{0.5cm}=(\id_{T M}+{W_t^\sharp}{{}\circ{}}{\gamma^\flat})\circ\big(\id_{T M}-(\id_{T M}+{W_t^\sharp}{{}\circ{}}{\gamma^\flat})^{-1}\circ W_t^{\sharp}\circ\gamma^{\flat}\big)\\
		    &\hspace{0.5cm}=\id_{T M}.
		    \end{align*}
		    This confirms that the equality \eqref{eq:toshowinverse} holds.
		    \item Concerning the bottom left entry, it suffices to note that
		    \[
		    (\widehat{X}_t)^{\sharp}=\big((\id_{T M}+{W_t^\sharp}{{}\circ{}}{\gamma^\flat})^{-1}X_t\big)^{\sharp}=X_t^{\sharp}\circ(\id_{T^* M}+{\gamma^\flat}{{}\circ{}}{W_t^\sharp})^{-1}. 
		    \]
		\end{itemize}
		This proves that the matrix of $\widetilde{W}_t^{\widetilde{\gamma}}$ is of the form \eqref{eq:W_t}, with $\widehat{W}_t$ and $\widehat{X}_t$ as defined in \eqref{eq:correspond}.
		\end{proof}
		
		Combining Lemma \ref{lem:square},   Lemma \ref{lem:squarebis} and Lemma \ref{lem:matrices}, we obtain the following.
		\begin{proposition}\label{prop:combine}
			The equation~\eqref{eq:correspond} establishes a bijection between
			\begin{itemize}
				\item  Smooth families  $\left(W_{t}\right)_{t\in[0,1]}$ of  MC elements  of $(\frakX^\bullet(M)[2],\{\frakl^G_k\})$  lying in $\calI_{\gamma}$, 
				together with smooth families $\left(X_{t}\right)_{t\in[0,1]}$  in $\mathfrak{X}(M)$ satisfying eq. \eqref{mc}
				\item Smooth families  $\big(\widehat{W}_{t}\big)_{t\in[0,1]}$ 
				lying in $\calI_{-\gamma}$, 
				together with smooth families $\big(\widehat{X}_{t}\big)_{t\in[0,1]}$  in $\mathfrak{X}(M)$ satisfying eq. \eqref{eq:koszul}.
			\end{itemize}
				\end{proposition}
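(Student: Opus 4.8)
The plan is to obtain the bijection by composing, in a zig-zag through the product manifold $M\times I^2$, the three bijections already established in Lemmas~\ref{lem:square}, \ref{lem:matrices} and \ref{lem:squarebis}.

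Start with the left-hand data: a smooth family $(W_t)_{t\in[0,1]}$ of MC elements of $(\frakX^\bullet(M)[2],\{\frakl^G_k\})$ lying in $\calI_\gamma$, together with a smooth family $(X_t)_{t\in[0,1]}$ in $\frakX(M)$ satisfying \eqref{mc}. By Lemma~\ref{lem:square} this is the same datum (bijectively, since $\widetilde W_t$ manifestly recovers the pair $(W_t,X_t)$ by reading off the parts with and without $\partial_s$) as the single bivector field $\widetilde W_t=W_t+(\partial_t+X_t)\wedge\partial_s$ being a MC element of $(\frakX^\bullet(M\times I^2)[2],\{\frakl^{\widetilde G}_k\})$; the constraint $(W_t)\subset\calI_\gamma$ is carried along, and by the block-triangular shape of $\id_{T^{*}(M\times I^2)}+\widetilde\gamma^\flat\circ\widetilde W_t^{\sharp}$ computed in the proof of Lemma~\ref{lem:matrices} it is equivalent to $\widetilde W_t$ taking values in $\calI_{\widetilde\gamma}$. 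Hence $\widetilde W_t$ lies precisely in the first class of Lemma~\ref{lem:matrices}. Applying that lemma, the gauge transform $\widetilde W_t\mapsto\widetilde W_t^{\widetilde\gamma}$ gives a bijection onto the MC elements of the Koszul dgL[1]a $(\frakX^\bullet(M\times I^2)[2],\widetilde{\frakm_1},\widetilde{\frakm_2})$ of the form $\widehat W_t+(\partial_t+\widehat X_t)\wedge\partial_s$ with $(\widehat W_t)\subset\calI_{-\gamma}$ and $\widehat W_t,\widehat X_t$ given explicitly by \eqref{eq:correspond}. Finally, Lemma~\ref{lem:squarebis} --- stated for arbitrary smooth families and so applicable verbatim to $(\widehat W_t,\widehat X_t)$ --- converts, bijectively, this last MC condition into the statement that $(\widehat W_t,\widehat X_t)$ solves the system \eqref{eq:koszul}. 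Composing the three bijections yields the claimed bijection, with the data related exactly as in \eqref{eq:correspond}; running the chain backwards, each step being a genuine equivalence, takes care of both injectivity and surjectivity.

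The argument is essentially bookkeeping once the three lemmas are in place, so there is no substantial obstacle. The only points to watch are: that the neighborhood conditions propagate correctly through the product construction, $\calI_\gamma\leftrightarrow\calI_{\widetilde\gamma}$ and $\calI_{-\gamma}\leftrightarrow\calI_{-\widetilde\gamma}$ (immediate from the block-triangular computation in Lemma~\ref{lem:matrices}); that smoothness in the parameter $t$ survives each step (clear, since every operation involved is fibrewise-algebraic and smooth in $t$); and that the ``of the prescribed form'' restrictions in the three lemmas line up so that the chain is composable, which they do by design.
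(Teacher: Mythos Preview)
Your proposal is correct and follows exactly the paper's approach: the paper simply states that the proposition is obtained ``Combining Lemma~\ref{lem:square}, Lemma~\ref{lem:squarebis} and Lemma~\ref{lem:matrices}'' without further argument. Your write-up supplies the bookkeeping details (compatibility of the $\calI_{\pm\gamma}$ conditions via the block-triangular computation, preservation of smoothness, and alignment of the ``prescribed form'' constraints) that the paper leaves implicit.
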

		Notice that in terms   of $\Pi_t =\Pi+\widehat{W_t}$, the equation \eqref{eq:koszul} says that 
		\begin{equation}\label{eq:exact}
		\begin{cases}
		[\Pi_t,\Pi_t]_{SN}=0\\
		\frac{d}{dt}\Pi_{t}=[\Pi_t,\widehat{X_{t}}]_{SN}
		\end{cases}.
		\end{equation}
		Restricting  Prop.~\ref{prop:combine} to elements $W_t$ lying in $\mathfrak{X}^2_\good(M)$, we obtain \emph{regular} Poisson structures $$\Pi_t:=\Pi+({W}_t)^{\gamma}$$ whose symplectic foliation is transverse to $G$, by Thm.~\ref{theor:deformation_theory regular_Poisson}. Recall that $\mathfrak{X}_{\good}(M)=\mathfrak{X}(M)$, so the same restriction has no effect on vector fields.
	Invoking the Moser lemma \ref{moser}, we obtain the main result of this section, showing that the gauge equivalence relation in $(\frakX^\bullet_\good(M)[2],\{\frakl^G_k\})$ coincides with the geometric notion of equivalence given by isotopies, in the following precise sense.
	
		\begin{theorem}\label{thm:gauge}
			Assume the  manifold $M$ is compact. Let $W_0$, $W_1$ be MC elements  of $(\frakX^\bullet_\good(M)[2],\{\frakl^G_k\})$  lying in $\calI_{\gamma}$. Then the following statements are equivalent:
			\begin{itemize}
				\item [i)] $W_0$ and $W_1$ are gauge equivalent through smooth families $\left(W_{t}\right)_{t\in[0,1]}\in\text{MC}(\frakX^\bullet_\good(M)[2],\{\frakl^G_k\})\cap\calI_{\gamma}$ and $\left(X_{t}\right)_{t\in[0,1]}\in\mathfrak{X}(M)$,
				\item [ii)]  there is a diffeomorphism $\psi$ which relates the corresponding Poisson structures
				$\Pi_0:=\Pi+(W_0)^{\gamma}$ and $\Pi_1:=\Pi+(W_1)^{\gamma}$, 
				and which is isotopic to $id_M$  by  an isotopy $(\psi_{t})_{t\in[0,1]}$ such that $(\psi_{t})_*(\im\Pi_0^{\sharp})$  is transverse to $G$ for all $t\in[0,1]$.
			\end{itemize}
		\end{theorem}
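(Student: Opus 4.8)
The plan is to prove both implications by transporting the gauge equivalence in $(\frakX^\bullet_\good(M)[2],\{\frakl^G_k\})$ to the path $\Pi_t:=\Pi+(W_t)^\gamma=\exp_G(W_t)$ of regular rank $2k$ Poisson structures, using Prop.~\ref{prop:combine} in the form restricted to good multivector fields, and then recognizing the resulting evolution equation \eqref{eq:exact} as exactly the hypothesis of the Moser lemma~\ref{moser}. The bridge throughout is Thm.~\ref{theor:deformation_theory regular_Poisson}: good MC elements $W\in\calI_\gamma$ of $\{\frakl^G_k\}$ correspond bijectively, via $\widetilde\Pi=\exp_G(W)$, to rank $2k$ regular Poisson structures $\widetilde\Pi$ with $\im\widetilde\Pi^\sharp\pitchfork G$; moreover $\exp_G$ is a diffeomorphism, so this bijection preserves smoothness in a parameter.

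For (i)$\Rightarrow$(ii), start from the gauge-equivalence data: a smooth family $(W_t)_{t\in[0,1]}$ of good MC elements in $\calI_\gamma$ interpolating $W_0,W_1$, and a smooth family $(X_t)_{t\in[0,1]}$ in $\frakX(M)$ satisfying \eqref{mc}. Proposition~\ref{prop:combine} converts this into the smooth path $\Pi_t=\Pi+(W_t)^\gamma$ of regular rank $2k$ Poisson structures with $\im\Pi_t^\sharp\pitchfork G$, together with a smooth family $\widehat X_t$ for which \eqref{eq:exact} holds, i.e.\ $\tfrac{d}{dt}\Pi_t=[\Pi_t,\widehat X_t]_{SN}$. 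Since $M$ is compact, $\widehat X_t$ integrates to an isotopy $(\psi_t)_{t\in[0,1]}$ with $\psi_0=\id_M$, and the Moser lemma~\ref{moser} yields $\Pi_t=(\psi_t)_*\Pi_0$. Then $\psi:=\psi_1$ relates $\Pi_0$ and $\Pi_1$, while $(\psi_t)_*(\im\Pi_0^\sharp)=\im\Pi_t^\sharp$ is transverse to $G$ for every $t$ (as $W_t$ is good); this is statement (ii).

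For (ii)$\Rightarrow$(i), run the argument backwards. Let $Y_t$ be the time-dependent vector field of the given isotopy $(\psi_t)$, with $\psi_1=\psi$, and set $\Pi_t:=(\psi_t)_*\Pi_0$: a smooth path of Poisson structures with endpoints $\Pi_0,\Pi_1$, each of rank $2k$ and with $\im\Pi_t^\sharp=(\psi_t)_*(\im\Pi_0^\sharp)\pitchfork G$ by hypothesis. Hence Thm.~\ref{theor:deformation_theory regular_Poisson} produces a unique smooth family $(W_t)$ of good MC elements of $\{\frakl^G_k\}$ in $\calI_\gamma$ with $\exp_G(W_t)=\Pi_t$, and uniqueness forces $W_t|_{t=0}=W_0$, $W_t|_{t=1}=W_1$. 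Differentiating $\Pi_t=(\psi_t)_*\Pi_0$ (the straightforward direction of Lemma~\ref{moser}) gives $\tfrac{d}{dt}\Pi_t=[\Pi_t,Y_t]_{SN}$, which is exactly \eqref{eq:exact} for $(\widehat W_t:=\Pi_t-\Pi,\ \widehat X_t:=Y_t)$. Feeding this into Prop.~\ref{prop:combine} backwards returns the family $(W_t)$ together with $X_t:=(\id_{TM}+W_t^\sharp\circ\gamma^\flat)Y_t$ --- well defined and smooth, since $\id_{T^\ast M}+\gamma^\flat\circ W_t^\sharp$ is invertible on $\calI_\gamma$ --- which satisfies \eqref{mc}; this is precisely the gauge-equivalence data in (i).

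I do not expect a deep obstacle, since Lemmas~\ref{lem:square}, \ref{lem:squarebis} and \ref{lem:matrices} and Prop.~\ref{prop:combine} already carry out the technical heart: the passage on $M\times I^2$ between the twisted $L_\infty[1]$-picture (parameter in $\calI_\gamma$) and the Koszul dgL[1]a picture (parameter in $\calI_{-\gamma}$). The point deserving care is the bookkeeping of the transversality condition ``$\im\Pi_t^\sharp\pitchfork G$'' along the entire path: in (i)$\Rightarrow$(ii) it is automatic from goodness of $W_t$ via Thm.~\ref{theor:deformation_theory regular_Poisson}, whereas in (ii)$\Rightarrow$(i) it is precisely what is being assumed, and it is what allows every $\Pi_t$ to be written as $\exp_G$ of a good MC element. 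Compactness of $M$ enters only to ensure completeness of the flow of $\widehat X_t$ in the first implication.
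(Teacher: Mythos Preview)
Your proposal is correct and follows essentially the same route as the paper's own proof: in both directions you invoke Prop.~\ref{prop:combine} to pass between the gauge-equivalence data $(W_t,X_t)$ in $(\frakX^\bullet_\good(M)[2],\{\frakl^G_k\})$ and the Koszul data $(\widehat W_t,\widehat X_t)$ satisfying \eqref{eq:exact}, then use the Moser lemma~\ref{moser} and Thm.~\ref{theor:deformation_theory regular_Poisson} exactly as the paper does. Your added remarks on where compactness and transversality enter are accurate and match the paper's implicit reasoning.
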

\begin{proof}
Given families $\left(W_{t}\right)_{t\in[0,1]}\in\text{MC}(\frakX^\bullet_\good(M)[2],\{\frakl^G_k\})\cap\calI_{\gamma}$ and $\left(X_{t}\right)_{t\in[0,1]}\in\mathfrak{X}(M)$ constituting a gauge equivalence between $W_0$ and $W_1$, Prop.~\ref{prop:combine} gives a path of regular Poisson structures $\Pi_t:=\Pi+({W}_t)^{\gamma}$ whose foliation is transverse to $G$, and a path of vector fields $\big(\widehat{X}_{t}\big)_{t\in[0,1]}\in\mathfrak{X}(M)$ such that
\[
\frac{d}{dt}\Pi_{t}=[\Pi_t,\widehat{X_{t}}]_{SN}.
\]
By Lemma~\ref{moser}, the path $\Pi_t$ is generated by the isotopy $(\psi_{t})_{t\in[0,1]}$ integrating $\big(\widehat{X}_{t}\big)_{t\in[0,1]}$, which then automatically satisfies $(\psi_{t})_*(\im\Pi_0^{\sharp})\pitchfork G$. 

Conversely, if $\Pi_0$ and $\Pi_1$ are related by an isotopy $(\psi_{t})_{t\in[0,1]}$ such that $(\psi_{t})_*(\im\Pi_0^{\sharp})\pitchfork G$, then Thm.~\ref{theor:deformation_theory regular_Poisson} implies that
\[
\Pi_t:=(\psi_t)_{*}\Pi_0=\Pi+(W_t)^{\gamma}
\]
for a smooth path $\left(W_{t}\right)_{t\in[0,1]}\in\text{MC}(\frakX^\bullet_\good(M)[2],\{\frakl^G_k\})\cap\calI_{\gamma}$ interpolating between $W_0$ and $W_1$. Moreover, Lemma~\ref{moser} shows that the time-dependent vector field $(Y_t)_{t\in[0,1]}\in\mathfrak{X}(M)$ of $(\psi_{t})_{t\in[0,1]}$ satisfies
\[
\frac{d}{dt}\Pi_t=[\Pi_t,Y_t].
\]
Applying Prop.~\ref{prop:combine} to $(\widehat{W}_t,\widehat{X}_{t})=((W_t)^{\gamma},Y_t)$, we see that the families $(W_t)_{t\in[0,1]}$ and $(X_t)_{t\in[0,1]}$ where
\[
X_t=(\id_{T M}+{W_t^\sharp}{{}\circ{}}{\gamma^\flat})(Y_t)
\]
constitute a gauge equivalence between $W_0$ and $W_1$. This finishes the proof.
\end{proof}

\subsection{The moduli space}
We now discuss the space $\RegPoiss^{2k}(M)$ of regular rank $2k$ Poisson structures on $M$, when quotienting by the action of the group of isotopies $\text{Diff}_{0}(M)$. By Thm.~\ref{theor:deformation_theory regular_Poisson} and Thm.~\ref{thm:gauge}, the formal tangent space to the moduli space $\RegPoiss^{2k}(M)/\text{Diff}_{0}(M)$ at the equivalence class of $\Pi$ is
\[
T_{[\Pi]}\big(\RegPoiss^{2k}(M)/\text{Diff}_{0}(M)\big)\cong H^{2}_{\good}(M,\Pi),
\]
where $H^{\bullet}_{\good}(M,\Pi)$ is the cohomology of the complex of good multivector fields $(\frakX^\bullet_\good(M),d_{\Pi})$, see Def.~\ref{def:good_multi-vector_fields}. The following example shows that this tangent space can change abruptly from point to point, which prevents the moduli space $\RegPoiss^{2k}(M)/\text{Diff}_{0}(M)$ from being smooth.

\begin{example}\label{ex:kronecker}
Consider the torus $\mathbb{T}^{3}$ with rank $2$ Poisson structure
\begin{equation*}
\Pi=\left(\partial_{\theta_1}+\lambda\partial_{\theta_2}\right)\wedge\partial_{\theta_3},
\end{equation*}
where $\lambda\in\mathbb{R}$ is a constant. The properties of the underlying foliation $T\mathcal{F}=\text{Span}\{\partial_{\theta_1}+\lambda\partial_{\theta_2},\partial_{\theta_3}\}$ depend on $\lambda$, for if $\lambda\in\mathbb{Q}$ then the leaves are embedded copies of $\mathbb{T}^{2}$, whereas for $\lambda\in\mathbb{R}\setminus\mathbb{Q}$ the leaves are immersed copies  of $\mathbb{R}\times S^{1}$ that are dense in $\mathbb{T}^{3}$.

Because $\Pi$ has corank one, the space of good multivector fields $\frakX^\bullet_\good(M)$ coincides with the space of all multivector fields $\frakX^\bullet(M)$, hence $H^{2}_{\good}(M,\Pi)$ is just the usual Poisson cohomology $H^{2}(M,\Pi)$. Because the Poisson structure $\Pi$ is induced by a cosymplectic structure, $H^{2}(M,\Pi)$ can be computed in terms of the foliated cohomology of $\mathcal{F}$, see \cite[Thm.~3.2.17]{Osorno}. We get
\[
H^{2}_{\good}(M,\Pi)=H^{2}(M,\Pi)\cong H^{2}(\mathcal{F})\oplus H^{1}(\mathcal{F}).
\]
Note that $(\mathbb{T}^{3},\mathcal{F})$ is a product foliation $(\mathbb{T}^{2},\mathcal{F}_{K})\times (S^{1},\mathcal{F}_{full})$, where $T\mathcal{F}_{K}=\text{Span}\{\partial_{\theta_1}+\lambda\partial_{\theta_2}\}$ and $\mathcal{F}_{full}$ is the one-leaf foliation on $S^{1}$. The second factor is compact and has finite dimensional foliated cohomology groups, which are just the de Rham cohomology groups of $S^1$. Hence, one can apply the K\"{u}nneth formula \cite{kunneth}
to compute $H^{\bullet}(\mathcal{F})$, which gives
\begin{align*}
&H^{2}(\mathcal{F})\cong H^{1}(\mathcal{F}_{K})\otimes H^{1}(\mathcal{F}_{full})\cong H^{1}(\mathcal{F}_{K}),\\
&H^{1}(\mathcal{F})\cong \big(H^{1}(\mathcal{F}_{K})\otimes H^{0}(\mathcal{F}_{full})\big) \oplus \big(H^{0}(\mathcal{F}_{K})\otimes H^{1}(\mathcal{F}_{full})\big)\cong H^{1}(\mathcal{F}_{K})\oplus H^{0}(\mathcal{F}_{K}).
\end{align*}
In conclusion, we have
\[
H^{2}_{\good}(M,\Pi)\cong H^{1}(\mathcal{F}_{K})\oplus H^{1}(\mathcal{F}_{K})\oplus H^{0}(\mathcal{F}_{K}).
\]
If $\lambda\in\mathbb{Q}$, then $\mathcal{F}_{K}$ is given by an $S^{1}$-fibration, hence the cohomology groups $H^{1}(\mathcal{F}_{K})$ and $H^{0}(\mathcal{F}_{K})$ are infinite dimensional. If $\lambda\in\mathbb{R}\setminus\mathbb{Q}$, then $\mathcal{F}_{K}$ is the well-known Kronecker foliation, and its first cohomology group $H^{1}(\mathcal{F}_{K})$ depends on which kind of irrational number $\lambda$ is \cite{haefliger},\cite[Chapter III]{liouville}. If $\lambda$ is a Liouville number\footnote{Recall that an irrational number $\lambda\in\mathbb{R}\setminus\mathbb{Q}$ is called Diophantine if there exist positive $s,c\in\mathbb{R}$ such that
\[
|m\lambda+n|\geq\frac{c}{(1+m^{2})^{s}}
\]
for any $(m,n)\in\mathbb{Z}^{2}\setminus\{(0,0)\}$. The Liouville numbers are the irrational numbers that are not Diophantine. They have the property that they are well-approximated by rational numbers.} then $H^{1}(\mathcal{F}_{K})$ is infinite dimensional, but if $\lambda$ is Diophantine then $H^{1}(\mathcal{F}_{K})$ is one-dimensional. Hence,
\begin{itemize}
\item if $\lambda\in\mathbb{Q}$ or $\lambda\in\mathbb{R}\setminus\mathbb{Q}$ is a Liouville number, then $H^{2}_{\good}(M,\Pi)$ is infinite dimensional.
\item if $\lambda\in\mathbb{R}\setminus\mathbb{Q}$ is Diophantine, then $H^{2}_{\good}(M,\Pi)\cong\mathbb{R}^{3}$ is finite dimensional.
\end{itemize}
This shows in particular that $H^{2}_{\good}(M,\Pi)$ is not stable under small perturbations of $\Pi$. For instance, let $\lambda$ be Diophantine and take a sequence of rational numbers $\lambda_n$ converging to $\lambda$. This gives rise to a family of Poisson structures $\Pi_n=\left(\partial_{\theta_1}+\lambda_{n}\partial_{\theta_2}\right)\wedge\partial_{\theta_3}$ converging to $\Pi$ in the $\mathcal{C}^{\infty}$-topology. The infinitesimal moduli space $T_{[\Pi]}\big(\RegPoiss^{2k}(M)/\text{Diff}_{0}(M)\big)$ is finite dimensional, whereas
$T_{[\Pi_n]}\big(\RegPoiss^{2k}(M)/\text{Diff}_{0}(M)\big)$ is infinite dimensional. Hence, $\RegPoiss^{2k}(M)/\text{Diff}_{0}(M)$ cannot be smooth.
\end{example}

		\appendix
		
		\section{Background on \texorpdfstring{$L_{\infty}[1]$-algebras}{L-algebras}}
		\label{app:L_infty_algebras}
		 		
	\subsection{$L_\infty[1]$-algebras}	
 We start by recalling the definition of an $L_{\infty}[1]$-algebra.

		\begin{definition}
		An \emph{$L_{\infty}[1]$-algebra} is a graded vector space $V$ with a collection of degree $1$ linear maps $\frakm_{k}:\otimes^{k}V\rightarrow V$ for $k\geq 1$, such that the following hold for homogeneous elements $v_1,\ldots,v_n\in V$.
		\begin{enumerate}
			\item Graded symmetry: for $\sigma\in S_n$, we have
			\[
			\frakm_{n}(v_{\sigma(1)},\ldots,v_{\sigma(n)})=\epsilon(\sigma;\bfv)\frakm_{n}(v_1,\ldots,v_n).
			\]
			\item Higher Jacobi identities: for all $n\geq 1$, we have
			\[
			\sum_{\substack{i+j=n+1\\ i,j\geq 1}}\sum_{\sigma\in S_{(i,n-i)}}\epsilon(\sigma;\bfv)\frakm_{j}\big(\frakm_{i}(v_{\sigma(1)},\ldots,v_{\sigma(i)}),v_{\sigma(i+1)},\ldots,v_{\sigma(n)}\big)=0.
			\] 
		\end{enumerate}

  Here $\epsilon(\sigma;\bfv)$ is the Koszul sign of $\sigma$ and $S_{(i,n-i)}$ denotes the space of $(i,n-i)$-unshuffles.
		\end{definition}
		
\begin{remark}
\begin{enumerate}
\item The notion of $L_{\infty}[1]$-algebra is equivalent to the more common notion of \emph{$L_{\infty}$-algebra} \cite{LadaMarkl}, in which the multibrackets $\frakl_{k}$ are graded skew-symmetric and of degree $2-k$. Given a graded vector space $V$, there is a correspondence between $L_{\infty}$-structures $\{\frakl_k\}$ on $V$ and $L_{\infty}[1]$-structures $\{\frakm_k\}$ on the shifted space $V[1]$ defined by $(V[1])^{i}=V^{i+1}$. The multibrackets are related as follows (we follow the convention of \cite[Rem.~1.1]{fiorenza2007cones}, up to a global minus sign):
\begin{equation}\label{eq:decalage}
\frakl_k(v_1,\ldots,v_k)=-(-)^{k}(-)^{\sum_{i}(k-i)|v_i|}\frakm_{k}(v_1[1],\ldots,v_k[1]).
\end{equation}
\item A dgL[1]a is an $L_{\infty}[1]$-algebra $(V,\{\frakm_k\})$ for which $\frakm_k=0$ whenever $k\geq 3$. The corresponding $L_{\infty}$-algebra then reduces to a differential graded Lie algebra (dgLa).
\end{enumerate}
\end{remark}		

A way to construct $L_{\infty}[1]$-algebras, which is important in this note, is Voronov's derived bracket construction. This approach uses the notion of V-data. 

\begin{definition}
A \emph{V-data} $(\frakh,\fraka,P,\Theta)$ consists of
\begin{itemize}
\item a graded Lie algebra $\frakh$ with Lie bracket $[-,-]$,
\item an abelian graded Lie subalgebra $\fraka\subset\frakh$,
\item a projection $P:\frakh\to\fraka$ such that $\ker P\subset\frakh$ is a graded Lie subalgebra,
\item a degree one element $\Theta\in\ker(P)$ such that $[\Theta,\Theta]=0$.
\end{itemize}
\end{definition}

\begin{proposition}[{\cite{voronov2005higher}}]
			\label{prop:higher_derived_brackets}
			A V-data $(\frakh,\fraka,P,\Theta)$ gives rise to an $L_\infty[1]$-algebra structure on $\fraka$, whose multibrackets $\calQ_k\in\Hom^1({\sf S}^k\fraka,\fraka)$ are given as follows, for all $a_1,\ldots,a_k\in\fraka$:
			\begin{equation*}
				\calQ_k(a_1\odot\cdots\odot a_k)=P[[\ldots[\Theta,a_1],\ldots],a_n].
			\end{equation*}
\end{proposition}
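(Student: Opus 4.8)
The plan is to verify the three defining properties of an $L_{\infty}[1]$-algebra for the brackets $\calQ_k(a_1\odot\cdots\odot a_k)=P[[\cdots[\Theta,a_1],\ldots],a_k]$, working directly from the axioms of a V-data. Two of them are immediate. The map $\calQ_k$ has degree $+1$ since $\Theta$ has degree $1$ and the bracket of $\frakh$ has degree $0$. And $\calQ_k$ is graded symmetric in $a_1,\dots,a_k$: the right-bracketing operators $r_a:=[-,a]$ with $a\in\fraka$ graded-commute — this is exactly the graded Jacobi identity of $\frakh$ together with $[a,a']=0$ for $a,a'\in\fraka$ — so the iterated bracket $[[\cdots[\Theta,a_1],\ldots],a_k]=r_{a_k}\cdots r_{a_1}(\Theta)$ is already graded symmetric in the $a_i$ before applying $P$; in particular $\calQ_k$ is well defined on ${\sf S}^k\fraka$.

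For the higher Jacobi identities I would pass to the cofree cocommutative coalgebra ${\sf S}(\fraka)$, where $\{\calQ_k\}$ determines a unique degree $1$ coderivation $\calQ$ with these Taylor coefficients (Prop.~\ref{prop:coalgebra_coderivation}), and the whole family of Jacobi identities becomes the single equation $\calQ\circ\calQ=0$. The structural facts to exploit are: (i) $D:=[\Theta,-]$ is a degree $1$ derivation of $\frakh$ with $D^2=\tfrac12[[\Theta,\Theta],-]=0$, by graded Jacobi and $[\Theta,\Theta]=0$; and (ii) $\ker P$ is a graded Lie subalgebra of $\frakh$ that is stable under $D$, since $\Theta\in\ker P$. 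Evaluating $\calQ^2$ on $a_1\odot\cdots\odot a_n$ produces a sum of terms of the shape $P[[\cdots[\Theta,P\xi_I],\ldots],a_J]$, where $\xi_I=[[\cdots[\Theta,a_{i_1}],\ldots],a_{i_l}]$ ranges over the iterated brackets built from subsets $I$ of the indices and $a_J$ over the complementary indices. The mechanism is then: write $[\Theta,P\xi_I]=D\xi_I-[\Theta,(1-P)\xi_I]$; expand $D\xi_I$ by the Leibniz rule, observing that the summand in which $D$ hits the innermost $\Theta$ dies because $D\Theta=0$; and finally check, using (ii) to control the correction terms $[\Theta,(1-P)\xi_I]\in\ker P$, that the total sum vanishes. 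Carrying this out is Voronov's computation.

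The main obstacle is precisely that last cancellation: keeping track of the Koszul signs and the unshuffle combinatorics while matching the terms produced by the Leibniz expansion against the remaining terms of $\calQ^2$ and against the $(1-P)$-corrections. Passing through the coalgebra identity $\calQ^2=0$ is what makes this bookkeeping tractable, since it lets one verify the relation one arity at a time, on each corestriction ${\sf S}^n\fraka\to\fraka$, which is exactly the $n$-th higher Jacobi identity. For the detailed verification I would follow \cite{voronov2005higher} (see also \cite{CS}).
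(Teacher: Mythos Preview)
The paper does not prove this proposition: it is quoted as a result of Voronov \cite{voronov2005higher} and no argument is supplied. Your outline is the standard one and is correct in its broad strokes --- the degree count and the graded symmetry (via $[r_a,r_b]=r_{[a,b]}=0$ on $\fraka$) are exactly right, and reducing the higher Jacobi identities to $\calQ^2=0$ on ${\sf S}(\fraka)$ is precisely Voronov's strategy. Since the paper offers nothing to compare against beyond the citation, there is no divergence in approach to discuss; your sketch simply fills in what the paper leaves to the reference.
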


At last, we recall what are Maurer-Cartan elements (MC for short) and their gauge equivalences.
		
\begin{definition}\label{def:mc}
A \emph{Maurer-Cartan element} of an $L_{\infty}[1]$-algebra $(V,\{\frakm_k\})$ is an element $v$ of degree $0$ satisfying the Maurer-Cartan equation
\[
\sum_{k=1}^{\infty}\frac{1}{k!}\frakm_{k}(v,\ldots,v)=0.
\]
\end{definition}


\begin{definition}\label{def:gauge}
Two MC elements $v_0$ and $v_1$ of $(V,\{\frakm_k\})$ are called \emph{gauge equivalent} if there exist:
\begin{itemize}
    \item a smooth one-parameter family $(v_t)_{t\in[0,1]}$ of MC elements interpolating between $v_0$ and $v_1$,
    \item a smooth one-parameter family $(w_t)_{t\in [0,1]}$ of degree $-1$ elements of $V$,
\end{itemize}
such that the following evolution equation is satisfied:
\[
\frac{d}{dt}v_t=\sum_{k=0}^{\infty}\frac{1}{k!}\frakm_{k+1}(w_t,\underbrace{v_t,\ldots,v_t}_\text{$k$ times}).
\]
\end{definition}

The $L_{\infty}[1]$-algebras appearing in this note have only finitely many nonzero multibrackets, so their Maurer-Cartan equation and gauge equivalence are well-defined notions.

\subsection{$L_\infty[1]$-morphisms }
To define morphisms between $L_{\infty}[1]$-algebras, it is useful to view the latter as graded coalgebras equipped with a codifferential. We first recall this alternative point of view.		

Let $V=\oplus_{k\in\bbZ}V^k$ be a graded vector space.
Its graded symmetric algebra $\sf{S}^\bullet V:=\oplus_{k\in\bbN}\sf{S}^kV$ inherits from the tensor algebra ${\sf T}^\bullet V=\oplus_{k\in\bbN}V^{\otimes k}$ the structure of a graded coalgebra with coproduct $\mu$ given by
\begin{equation*}
	\mu(v_1\odot\ldots\odot v_n)=\sum_{i=1}^{n-1}\sum_{\sigma\in S_{(i,n-i)}}\epsilon(\sigma,{\bf v})(v_{\sigma(1)}\odot\ldots\odot v_{\sigma(i)})\otimes (v_{\sigma(i+1)}\odot\ldots\odot v_{\sigma(n)}).
\end{equation*}

\begin{definition}
A degree $k$ graded linear map $X:\sf{S}V\to \sf{S}V$ is a \emph{degree $k$ coderivation} of $(\sf{S}V,\mu)$ if
		\begin{equation*}
			\mu\circ X=(X\otimes\id+\id\otimes X)\circ\mu.
		\end{equation*}
\end{definition}

We now recall that coderivations can be described completely in terms of their Taylor coefficients.

		\begin{proposition}
			\label{prop:coalgebra_coderivation}
			For any graded vector space $V$, there is a degree $0$ graded linear isomorphism
			\begin{equation}\label{eq:isocoder}
				\operatorname{CoDer}^\bullet({\sf S} V)\longrightarrow \Hom^\bullet({\sf S}V,V)=\oplus_{n\in\bbN}\Hom^\bullet({\sf S}^n V,V)
			\end{equation}
			mapping each $\calQ\in\operatorname{CoDer}^k({\sf S}V)$ to the family $\{\calQ_n\}\in\oplus_{n\in\bbN}\Hom^k({\sf S}^nV,V)$ given by the following:
			\begin{equation*}
				\begin{tikzcd}
					{\sf S}^nV\arrow[rrr, bend right=15, swap, "\calQ_n"] \arrow[r, hook, "\text{incl}"]&{\sf S}^\bullet V\arrow[r, "\calQ"]&{\sf S}^\bullet V\arrow[r, two heads, "\pr_1"]&V
				\end{tikzcd},
			\end{equation*}
			where $\pr_1:{\sf S}^\bullet V\to V$ denotes the projection.
			In particular, $\calQ\in\operatorname{CoDer}^k({\sf S}V)$ can be reconstructed out of the family $\{\calQ_n\}\in\oplus_{n\in\bbN}\Hom^k({\sf S}^nV,V)$ as follows:
			\begin{equation}\label{eq:codercoeff}
				\calQ(v_1\odot\ldots\odot v_n)=\sum_{i=1}^n\sum_{\sigma\in S_{(i,n-i)}}\epsilon(\sigma;{\bf v})\calQ_i(v_{\sigma(1)}\odot\ldots\odot v_{\sigma(i)})\odot v_{\sigma(i+1)}\odot\ldots\odot v_{\sigma(n)},
			\end{equation}
			for all homogeneous $v_1,\ldots,v_n\in V$.
			Additionally, the isomorphism \eqref{eq:isocoder} induces a bijection between
			\begin{itemize}
				\item \emph{codifferentials} $\calQ$ of ${\sf S}V$, i.e.~$\calQ\in\operatorname{CoDer}^1({\sf S}V)$ such that $\calQ\circ\calQ=0$,
				\item $L_\infty[1]$-algebra structures $\{\calQ_n\}$ on $V$.
			\end{itemize}
		\end{proposition}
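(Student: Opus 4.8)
The plan is to write down the map $\operatorname{CoDer}^\bullet({\sf S}V)\to\Hom^\bullet({\sf S}V,V)$ of the statement, $\calQ\mapsto\{\calQ_n:=\pr_1\circ\calQ|_{{\sf S}^nV}\}$, together with the candidate inverse $\Psi$ defined by the right-hand side of~\eqref{eq:codercoeff}, and to check that the two composites are the identity; the bijection between codifferentials and $L_\infty[1]$-structures then comes out by translating $\calQ\circ\calQ=0$ into a statement about Taylor coefficients. Conceptually all of this expresses that the reduced symmetric coalgebra $({\sf S}^{\ge 1}V,\mu)$ is cofree in the category of conilpotent cocommutative coalgebras, so that coderivations of it are determined by their corestriction $\pr_1\circ(-)$ to the cogenerators $V$; but the argument can be run by hand as follows.

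\emph{Step 1: $\Psi(\{\calQ_n\})$ is a coderivation.} One checks the identity $\mu\circ\Psi(\{\calQ_n\})=(\Psi(\{\calQ_n\})\otimes\id+\id\otimes\Psi(\{\calQ_n\}))\circ\mu$ on each ${\sf S}^nV$. This is a bookkeeping exercise with $(i,n-i)$-unshuffles: splitting off the $i$ arguments fed into $\calQ_i$ and then comultiplying must be matched, with the correct Koszul sign $\epsilon(\sigma;\bfv)$, against first comultiplying and then letting $\calQ_i$ act on one of the two tensor factors. The only delicate point is keeping the signs consistent, and this is where most of the routine labour lies. Note in passing that $\Psi(\{\calQ_n\})$ sends ${\sf S}^nV$ into $\bigoplus_{j=1}^{n}{\sf S}^jV$, a fact used below, and that it is degree-preserving, so both $\Psi$ and $\calQ\mapsto\{\calQ_n\}$ are degree $0$ graded maps as claimed.

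\emph{Step 2: the two composites are identities.} Applying $\calQ\mapsto\{\calQ_n\}$ to $\Psi(\{\calQ_n\})$: in~\eqref{eq:codercoeff} the $i=n$ summand lies in $V$ and contributes $\calQ_n(v_1\odot\cdots\odot v_n)$, whereas every $i<n$ summand lies in ${\sf S}^{n-i+1}V$ with $n-i+1\ge 2$ and is killed by $\pr_1$; hence $\pr_1\circ\Psi(\{\calQ_n\})|_{{\sf S}^nV}=\calQ_n$. For the other composite, set $\calR:=\calQ-\Psi(\{\pr_1\circ\calQ|_{{\sf S}^nV}\})$, a coderivation all of whose Taylor coefficients vanish, and prove $\calR=0$ by induction on $n$. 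For $n=1$ the reduced coproduct vanishes on $V$, so the coderivation property forces $\mu(\calR(v))=0$; since the primitives of ${\sf S}^{\ge1}V$ are exactly ${\sf S}^1V$ (a standard fact about the reduced symmetric coalgebra), $\calR(v)\in V$ and hence $\calR(v)=\pr_1(\calR(v))=0$. For $n>1$, the coderivation property expresses $\mu(\calR(v_1\odot\cdots\odot v_n))$ through $\calR$ acting on tensor factors of $\mu(v_1\odot\cdots\odot v_n)$, whose words are strictly shorter, so it vanishes by the inductive hypothesis; thus $\calR(v_1\odot\cdots\odot v_n)$ is primitive, lies in $V$, and equals its $\pr_1$-component, which is $0$. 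This establishes that~\eqref{eq:isocoder} is an isomorphism with inverse~\eqref{eq:codercoeff}.

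\emph{Step 3: codifferentials versus $L_\infty[1]$-structures.} Graded symmetry of the $\calQ_n$ holds by construction, since they are defined on ${\sf S}^nV$. If $\calQ\in\operatorname{CoDer}^1({\sf S}V)$, then $\calQ\circ\calQ$ is a degree $2$ coderivation, hence vanishes iff all its Taylor coefficients $\pr_1\circ(\calQ\circ\calQ)|_{{\sf S}^nV}$ do. Plugging the reconstruction formula~\eqref{eq:codercoeff} for the inner $\calQ$ into $\pr_1\circ\calQ$ and using Step 2, one finds that $\pr_1\circ(\calQ\circ\calQ)|_{{\sf S}^nV}$ evaluated on $v_1\odot\cdots\odot v_n$ equals
\[
\sum_{\substack{i+j=n+1\\ i,j\ge1}}\ \sum_{\sigma\in S_{(i,n-i)}}\epsilon(\sigma;\bfv)\,\calQ_j\big(\calQ_i(v_{\sigma(1)}\odot\cdots\odot v_{\sigma(i)})\odot v_{\sigma(i+1)}\odot\cdots\odot v_{\sigma(n)}\big),
\]
which is precisely the left-hand side of the $n$-th higher Jacobi identity. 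Hence $\calQ\circ\calQ=0$ is equivalent to $\{\calQ_n\}$ being an $L_\infty[1]$-algebra structure on $V$. Throughout, the only genuine obstacle is combinatorial — controlling the unshuffle sums and Koszul signs in Step 1 and in the computation of $\pr_1\circ\calQ\circ\calQ$ in Step 3 — and nothing conceptually new is needed beyond the cofreeness picture.
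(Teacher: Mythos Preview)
Your argument is correct and is the standard proof of this classical fact about the cofree conilpotent cocommutative coalgebra. Note, however, that the paper states this proposition as background material in the appendix and provides no proof of its own; there is nothing to compare against.
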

		
Given graded vector spaces $V$ and $W$, let $\mu_V$ and $\mu_W$ denote the coproducts on $\sf{S}V$ and $\sf{S}W$.
		A \emph{degree $k$ graded coalgebra morphism} $\sf{S}V\to \sf{S}W$ is a degree $k$ graded linear map $\Phi:\sf{S}V\to \sf{S}W$ such that
		\begin{equation*}
			(\Phi\otimes\Phi)\circ\mu_V=\mu_W\circ\Phi.
		\end{equation*}

With Prop. \ref{prop:coalgebra_coderivation} in mind, it is now clear how one should define morphisms between $L_{\infty}[1]$-algebras.
	
\begin{definition}
Let $(V,\{\calQ_n\})$ and $(W,\{\calR_n\})$ be $L_\infty[1]$-algebras, with codifferentials $\calQ$ on ${\sf S}V$ and $\calR$ on ${\sf S}W$.
An \emph{$L_\infty[1]$-algebra (iso)morphism} $(V,\{\calQ_n\})\longrightarrow(W,\{\calR_n\})$ is a degree $0$ graded coalgebra (iso)morphism $\Phi:{\sf S}V\to{\sf S}W$ such that $\calR\circ\Phi=\Phi\circ\calQ$.
\end{definition}	

The following proposition describes graded coalgebra morphisms in terms of their Taylor coefficients.

	\begin{proposition}
			\label{prop:coalgebra_morphism}
			For graded vector spaces $V$ and $W$, there is a degree $0$ graded linear isomorphism
			\begin{equation*}
				\Hom^\bullet({\sf S} V,{\sf S} W)\longrightarrow \Hom^\bullet({\sf S}V,W)=\oplus_{n\in\bbN}\Hom^\bullet({\sf S}^n V,W)
			\end{equation*}
			mapping each $\Phi\in\Hom^k({\sf S}V,{\sf S}W)$ to the family $\{\Phi_n\}\in\oplus_{n\in\bbN}\Hom^k({\sf S}^nV,W)$ given by the following:
			\begin{equation*}
				\begin{tikzcd}
					{\sf S}^nV\arrow[rrr, bend right=15, swap, "\Phi_n"] \arrow[r, hook, "\text{incl}"]&{\sf S}^\bullet V\arrow[r, "\Phi"]&{\sf S}^\bullet W\arrow[r, two heads, "\pr_1"]&W
				\end{tikzcd}.
			\end{equation*}
			In particular, $\Phi$ can be reconstructed out of the family $\{\Phi_n\}$ as follows:
			\begin{align*}
				&\Phi(v_1\odot\ldots\odot v_n)\\
				&\hspace{1cm}=\sum_{i=1}^n\sum_{p_1+\ldots+p_i=n}\sum_{\sigma\in{\sf S}_n}\frac{\epsilon(\sigma;{\bf v})}{i!p_1!\cdots p_i!}\Phi_{p_1}(v_{\sigma(1)}\odot\ldots\odot v_{\sigma(p_1)})\odot\cdots\odot \Phi_{p_i}(v_{\sigma(p_{1}+\cdots+p_{i-1}+1)}\odot\ldots\odot v_{\sigma(n)})
			\end{align*}
			for all homogeneous $v_1,\ldots,v_n\in V$.
			Moreover, $\Phi:{\sf S}V\to{\sf S}W$ is invertible exactly when $\Phi_1:V\to W$ is.
		\end{proposition}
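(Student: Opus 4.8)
The plan is to prove the proposition in the same way as its coderivation analogue Prop.~\ref{prop:coalgebra_coderivation}: the content is the cofreeness of the graded symmetric coalgebra ${\sf S}W$ among conilpotent cocommutative coalgebras, together with the extra invertibility criterion. For $i\ge 1$ write $\pr_i\colon{\sf S}W\to{\sf S}^iW$ and $\pr_1\colon{\sf S}W\to W$ for the projections, and set ${\sf S}^{\le k}V:=\bigoplus_{m\le k}{\sf S}^mV$. The map in the statement is $\Phi\mapsto\{\Phi_n\}$ with $\Phi_n:=\pr_1\circ\Phi\circ\mathrm{incl}$, which is visibly graded linear of degree zero, so the real points are: (a) the reconstruction formula (whence injectivity), (b) surjectivity, and (c) the invertibility criterion.

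For (a): iterating the morphism identity $(\Phi\otimes\Phi)\circ\mu_V=\mu_W\circ\Phi$ and using coassociativity gives $\mu_W^{(i-1)}\circ\Phi=\Phi^{\otimes i}\circ\mu_V^{(i-1)}$ for the $(i-1)$-fold iterated coproducts. Post-composing with $\pr_1^{\otimes i}$, the left-hand side becomes $i!$ times the symmetric embedding of $\pr_i\circ\Phi$ (the composite $\pr_1^{\otimes i}\circ\mu_W^{(i-1)}$ annihilates ${\sf S}^{\ne i}W$ and equals $i!$ times the symmetric embedding ${\sf S}^iW\hookrightarrow W^{\otimes i}$), while the right-hand side becomes $\sum(\Phi_{p_1}\otimes\cdots\otimes\Phi_{p_i})\circ\mu_V^{(i-1)}$. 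Evaluating on $v_1\odot\cdots\odot v_n$, expanding $\mu_V^{(i-1)}$ as a sum over $(p_1,\dots,p_i)$-unshuffles, rewriting that sum as a sum over all of $S_n$ with the correction factor $1/(p_1!\cdots p_i!)$, and dividing by the overall $i!$, produces precisely the inner sum of the displayed formula, with coefficient $\tfrac{1}{i!\,p_1!\cdots p_i!}$ and Koszul sign $\epsilon(\sigma;{\bf v})$. (Equivalently one may induct on the arity $n$, peeling off one application of $\mu_V$.) This reconstructs $\Phi$ from $\{\Phi_n\}$, so $\Phi\mapsto\{\Phi_n\}$ is injective.

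For (b): given an arbitrary family $\{\Phi_n\}\in\bigoplus_n\Hom^\bullet({\sf S}^nV,W)$, \emph{define} $\Phi$ by the displayed formula and check $(\Phi\otimes\Phi)\circ\mu_V=\mu_W\circ\Phi$ directly. Applied to $v_1\odot\cdots\odot v_n$, both sides reorganise as sums over an ordered splitting of $\{1,\dots,n\}$ into two blocks refined by the arguments of the various $\Phi_{p_j}$; the Koszul signs match by the usual reshuffling lemma and the numerical coefficients agree because splitting the first few outputs of an $i$-fold decomposition is the same datum as first decomposing and then coproducting. (Alternatively, this step is precisely the universal property of the cofree conilpotent cocommutative coalgebra and can simply be quoted, e.g.\ from \cite{LadaMarkl}.) Together with (a) this gives the claimed bijection.

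For (c): from the reconstruction formula, $\Phi({\sf S}^{\le k}V)\subseteq{\sf S}^{\le k}W$, and the induced map on the $k$-th graded piece ${\sf S}^kV\to{\sf S}^kW$ is the symmetric power $\Phi_1^{\odot k}$ (only the summand with all $p_j=1$ contributes). Hence if $\Phi$ is an isomorphism, so is each $\Phi_1^{\odot k}$, in particular $\Phi_1$. Conversely, if $\Phi_1$ is invertible then so is $\Phi_1^{\odot k}$ for all $k$ (we are over $\bbR$), so the filtration-preserving, block-triangular map $\Phi$ restricts to an isomorphism ${\sf S}^{\le k}V\to{\sf S}^{\le k}W$ for each $k$ by back-substitution, and these inverses are compatible; concretely one builds the Taylor coefficients $\Psi_n$ of the inverse recursively, $\Psi_1=\Phi_1^{-1}$ and $\Psi_n$ chosen so that the $n$-th Taylor coefficient of the coalgebra morphism $\Phi\circ\Psi$ vanishes (solvable since that coefficient equals $\Phi_1\circ\Psi_n$ plus terms in $\Psi_{<n}$), then concludes $\Phi\circ\Psi=\id$ and symmetrically $\Psi\circ\Phi=\id$ by part (a). The only genuine obstacle is bookkeeping: tracking the Koszul signs and factorials through the iterated coproduct in (a) and through the composition formula in (c); conceptually it is all the standard cofreeness of ${\sf S}W$, and the write-up can be shortened by citing it.
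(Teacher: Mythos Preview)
The paper does not actually prove this proposition: it is stated in Appendix~A as a standard background fact and left without proof, just like Prop.~\ref{prop:coalgebra_coderivation}. So there is no ``paper's own proof'' to compare against; your task was really to supply a proof of a folklore statement.

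Your argument is correct and follows the standard route (cofreeness of the reduced symmetric coalgebra). The three steps---reconstruction via iterated coproducts for injectivity, direct verification of the morphism identity for surjectivity, and the filtration/block-triangular argument for the invertibility criterion---are exactly how this is usually done. One cosmetic point: you describe the assignment $\Phi\mapsto\{\Phi_n\}$ as ``graded linear of degree zero'', echoing the paper's phrasing, but strictly speaking the set of graded coalgebra morphisms is not a vector space, so the word ``linear'' is a mild abuse here (inherited from the paper's own loose statement). This does not affect the mathematics, only the wording. Otherwise the proof is sound and, given that the paper provides none, your write-up is more than adequate; if anything you could shorten it by simply citing the cofreeness property, as you yourself note at the end.
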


\begin{remark}
An $L_{\infty}[1]$-algebra morphism $(V,\{\calQ_n\})\longrightarrow(W,\{\calR_n\})$ is \emph{strict} if the Taylor coefficients $\Phi_n$ of the corresponding codifferential coalgebra morphism $\Phi:({\sf S}V,\calQ)\to({\sf S}W,\calR)$  vanish for $n>1$.
\end{remark}

\section{Courant Algebroids and Dirac Structures}\label{app:Diracgeom}

We review some background material concerning Courant algebroids and Dirac structures. 

\begin{definition}
	A \emph{Courant algebroid} is a vector bundle $E\to M$  equipped with a non-degenerate symmetric pairing $\ldab-,-\rdab:E\otimes E\to\bbR$, a bilinear bracket $\ldsb-,-\rdsb:\Gamma(E)\times\Gamma(E)\to\Gamma(E)$ called the \emph{Dorfman bracket}, and a vector bundle morphism $\rho:E\to TM$ over $\id_M$ called the \emph{anchor map}, safisfying
	\begin{align*}
		&\ldsb e,\ldsb e^\prime,e^{\prime\prime}\rdsb\rdsb=\ldsb\ldsb e,e^\prime\rdsb,e^{\prime\prime}\rdsb+\ldsb e^\prime,\ldsb e,e^{\prime\prime}\rdsb\rdsb,\\
		&\ldab\ldsb e,e^\prime\rdsb,e^{\prime\prime}\rdab=\ldab e,\ldsb e^\prime,e^{\prime\prime}\rdsb\rdab,\\
		&\rho(e)\ldab e^\prime,e^\prime\rdab=2\ldab\ldsb e,e^\prime\rdsb,e^\prime\rdab,
	\end{align*}
	for any $e,e^\prime,e^{\prime\prime}\in\Gamma(E)$.
	Consequently, the Dorfman bracket and the anchor are related by the following Leibniz rule, for $e,e^\prime\in\Gamma(E)$ and $f\in C^\infty(M)$:
	\begin{equation*}
		\ldsb e,fe^\prime\rdsb=(\rho(e)f)e^\prime+f\ldsb e,e^\prime\rdsb.
	\end{equation*}
\end{definition}

\begin{example}
	The \emph{generalized tangent bundle} $\bbT M:=TM\oplus T^\ast M$ is the prototypical example of a Courant algebroid. Its pairing $\ldab-,-\rdab$, Dorfman bracket $\ldsb-,-\rdsb$ and anchor $\rho$ are defined on sections $X+\alpha,Y+\beta\in\Gamma(\bbT M)$ as follows:
	\begin{equation*}
		\ldab X+\alpha,Y+\beta\rdab=\alpha(Y)+\beta(X),\quad\rho(X+\alpha)=X,\quad\ldsb X+\alpha,Y+\beta\rdsb=[X,Y]+\calL_X\beta-\iota_Y\rmd\alpha.
	\end{equation*}
\end{example}

\begin{definition}
	Given a Courant algebroid $E$, a subbundle $L\subset E$ is called an \emph{almost Dirac structure} if $L=L^\perp$, where $L^\perp\subset E$ denotes the orthogonal of $L$ w.r.t. the pairing $\ldab-,-\rdab$.
	A \emph{Dirac structure} is an almost Dirac structure $L\subset E$ that is additionally involutive w.r.t. the Dorfman bracket $\ldsb-,-\rdsb$. 
\end{definition}

\begin{remark}
	\label{rem:Courant_tensor}
	For each almost Dirac structure $L\subset E$, its \emph{Courant tensor} $\Upsilon_L\in\Gamma(\wedge^3 L^{*})$ is defined by
	\begin{equation*}
		\Upsilon_L(\xi_1,\xi_2,\xi_3)=\ldab\xi_1,\ldsb\xi_2,\xi_3\rdsb\rdab,
	\end{equation*}
	for all $\xi_1,\xi_2,\xi_3\in\Gamma(L)$. It is easy to see that $L$ is Dirac if and only if $\Upsilon_L=0$.
\end{remark}

\begin{example}
Poisson structures on $M$ can be viewed as Dirac structures in $\bbT M$, as follows. Bivector fields  on $M$ correspond with almost Dirac structures that are transverse to $TM$, via
\begin{equation*}
\begin{aligned}
\frakX^2(M)&\overset{\sim}{\longrightarrow}\{L\subset\bbT M\ \text{almost Dirac structure}\mid L\pitchfork TM\},\\
Z&\longmapsto\gr(Z):=\{\iota_\alpha Z+\alpha\mid\alpha\in T^\ast M\}.
\end{aligned}
\end{equation*}   
Under this correspondence, $Z\in\frakX^2(M)$ is Poisson if and only if $\gr(Z)\subset\bbT M$ is Dirac.
\end{example}

\begin{remark}
	\label{rem:almost_Dirac_structure}
	Let $L\subset E$ be an almost Dirac structure, and assume that $R$ is a complementary subbundle of $E$. Using the projection $\pr_{L}$ with kernel $R$, one can restrict the Dorfman bracket $\ldsb-,-\rdsb$ to an almost Lie bracket $\pr_{L}\circ\ldsb-,-\rdsb$ on $\Gamma(L)$. Along with the restriction of the anchor map $\rho$ to $L$, we get an almost Lie algebroid structure $(\rho|_{L},\pr_{L}\circ\ldsb-,-\rdsb)$ on $L$.
	If $L$ is in fact Dirac, then it inherits an honest Lie algebroid structure, independent of $R$.
\end{remark}

		\section{Deformation Theory of Dirac Structures}
		\label{app:Deformations_Dirac_Structures}

		We review some constructions and results from deformation theory of Dirac structures. Given a Courant algebroid $E\to M$ and a Dirac structure $A\subset E$, we recall how the derived bracket construction in Prop.~\ref{prop:higher_derived_brackets} can be used to obtain an $L_{\infty}[1]$-algebra structure
		on $\Omega^{\bullet}(A)[2]$.

		Start by fixing an almost Dirac structure $B\subset E$ complementary to $A$, so that $E=A\oplus B$.
		Then the pairing $\ldab-,-\rdab$ induces a vector bundle isomorphism
		\begin{equation}\label{eq:VB-iso_A+A^ast}
			\phi_B:E=A\oplus B\overset{\sim}{\longrightarrow} A\oplus A^\ast,\ u+v\longmapsto u+\ldab v,-\rdab|_A,
		\end{equation}
		which allows us to transfer the Courant algebroid structure from $E=A\oplus B$ to $A\oplus A^\ast$. In particular, the non-degenerate symmetric pairing on $A\oplus A^\ast$ is given by
		\begin{equation*}
			\ldab X+\alpha,Y+\beta\rdab=\alpha(Y)+\beta(X).
		\end{equation*}
		By \cite[Thm.~4.5]{Roytenberg2002gradedsymplectic}, the Courant algebroid structure thus obtained on $A\oplus A^{*}$ corresponds to a degree $2$ symplectic NQ-manifold $(T^\ast[2]A[1],\omega,\Delta)$, where $\omega$ is the canonical degree $2$ symplectic structure on the graded manifold $T^\ast[2]A[1]$ and $\Delta=\{\Theta_B,-\}$
		for some MC element $\Theta_B$ of the graded Lie algebra $(C^\infty(T^\ast[2]A[1])[2],\{-,-\})$. Here $\{-,-\}$ is the degree $-2$ Poisson bracket corresponding with $\omega$.
		Explicitly, one has
		\begin{equation}\label{eq:Roytenberg-correspondence}
			\ldab u, v\rdab=\{u,v\},\quad\ldsb u, v\rdsb=\{\{u,\Theta_B\},v\},\quad\rho(u)f=\{\{u,\Theta_B\},f\},
		\end{equation}
		for all $u,v\in\Gamma(A\oplus A^\ast)\subset C^\infty(T^\ast[2]A[1])$ and $f\in C^\infty(M)\subset C^\infty(T^\ast[2]A[1])$. 
		
		According to \cite[Lemma 2.6]{FZgeo}, we can now cook up the V-data
		$
			(\frakh,\fraka,P,\Theta_B)
		$
		consisting of:
		\begin{enumerate}
			\item the graded Lie algebra $\frakh:=(C^\infty(T^\ast[2]A[1])[2],\{-,-\})$,
			\item the abelian Lie subalgebra $\fraka:=C^\infty(A[1])[2]=\Omega^\bullet(A)[2]$,
			\item the projection $P:\frakh\to\fraka$ is the restriction to the zero section, satisfying $\{\ker P,\ker P\}\subset\ker P$,
			\item the MC element $\Theta_B\in\operatorname{MC}(\frakh)$ satisfying $P(\Theta_B)=0$.
		\end{enumerate}	
		Applying Voronov's derived bracket construction (see Prop.~\ref{prop:higher_derived_brackets}), one obtains an $L_\infty[1]$-algebra structure $\{\mu_k^B\}$ on $\fraka=\Omega^\bullet(A)[2]$ whose multibrackets $\mu_k^B:{\sf S}^k\fraka\to\fraka$ are defined as follows:
		\begin{equation}
			\label{eq:higher_derived_brackets}
			\mu_k^B(\alpha_1,\ldots,\alpha_k):=P\{\{\ldots\{\Theta_B,\alpha_1\},\ldots,\},\alpha_k\}.
		\end{equation}
		Actually, as pointed out in~\cite[Lemma 2.6]{FZgeo}, $\mu_k^B=0$ for all $k>3$, and the other multibrackets $\mu^B_1,\mu^B_2,\mu^B_3$ can be re-expressed in terms of the Courant algebroid $E=A\oplus B$, as described in Lemma~\ref{lem:2.6first} below.
		
		Before stating Lemma~\ref{lem:2.6first}, we need to introduce some notation.
		Let $A\to M$ be a vector bundle.
		For any $k$-form $\alpha$ on $A$, i.e.~$\alpha\in\Omega^k(A)$, we denote by $\alpha^\flat:A\to\wedge^{k-1}A^\ast$ the contraction map.
		For $\alpha_1\in\Omega^{k_1}(A),\ldots,\alpha_n\in\Omega^{k_n}(A)$, we then define a bundle morphism $\alpha_1^\flat\wedge\ldots\wedge \alpha_n^\flat:\wedge^n A\to\wedge^{k_1+\ldots+k_n-n}A^\ast$ by
		\begin{equation}
			\label{eq:sharp:property1}
			(\alpha_1^\flat\wedge\ldots\wedge \alpha_n^\flat)(\xi_1\wedge\ldots\wedge\xi_n)=\sum_{\sigma\in S_n}(-)^\sigma \alpha_1^\flat(\xi_{\sigma(1)})\wedge\ldots\wedge \alpha_n^\flat(\xi_{\sigma(n)}).
		\end{equation}
		Using this morphism, 
		the next lemma re-expresses the multibrackets~\eqref{eq:higher_derived_brackets} in terms of the geometry of the Courant algebroid $E=A\oplus B$. We add an extra minus sign to the binary bracket, for reasons that we explain in Rem.~\ref{rem:decalage} below. {We remark that in \cite[Lemma 2.6]{FZgeo} a global minus sign  in front of the trinary bracket was mistakenly omitted.}
		\begin{lemma}{\cite[Lemma 2.6]{FZgeo}}
			\label{lem:2.6first}
			Given a Courant algebroid $E\to M$, let $E=A\oplus B$ be a splitting into a Dirac structure $A$ and an almost Dirac complement $B$.
						The graded vector space $\Omega^\bullet(A)[2]$ has an induced $L_\infty[1]$-algebra structure $\{\mu_k^B\}$ whose only non-trivial multibrackets $\mu_1^B,\ \mu_2^B,\ \mu_3^B$ are given as follows:
			\begin{itemize}
				\item $\mu_1^B$ coincides with the de Rham differential of Lie algebroid $A\Rightarrow M$, i.e.~for all $\alpha\in\Omega^\bullet(A)$:
				\begin{equation*}
					\mu_1^B(\alpha)=\rmd_A\alpha.
				\end{equation*}
				\item $\mu_2^B$ acts as follows, for all homogeneous $\alpha,\beta\in\Omega^\bullet(A)$:
				\begin{equation}
					\label{eq:lem:L_infty-algebra_Dirac:binary_bracket}
					\mu_2^B(\alpha,\beta)=(-1)^{|\alpha|}[\alpha,\beta]_B.
				\end{equation}
				Here $[-,-]_B$ is the almost Gerstenhaber bracket on $\Gamma(\wedge^{\bullet}B)$ defined by extending the almost Lie bracket $\pr_{B}\circ\ldsb-,-\rdsb$ of the almost Lie algebroid $B$.
				\item $\mu_3^B$ acts as follows, for all homogeneous $\alpha,\beta,\gamma\in\Omega^\bullet(A)$:
				\begin{equation}
					\label{eq:lem:L_infty-algebra_Dirac:ternary_bracket}
					\mu_3^B(\alpha, \beta, \gamma)={-}(-1)^{|\beta|} (\alpha^\flat\wedge \beta^\flat \wedge \gamma^\flat) \Upsilon_B.
				\end{equation}
				Here $\Upsilon_B$ denotes the Courant tensor of the almost Dirac structure $B$ (see Rem.~\ref{rem:Courant_tensor}).
			\end{itemize}
			In the RHS of equations~\eqref{eq:lem:L_infty-algebra_Dirac:binary_bracket} and~\eqref{eq:lem:L_infty-algebra_Dirac:ternary_bracket}, we use the identification $B\overset{\simeq}{\longrightarrow} A^\ast:\ u\longmapsto\ldab u,-\rdab|_A$.
		\end{lemma}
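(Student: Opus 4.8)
The statement to prove is Lemma~\ref{lem:2.6first}, which re-expresses the Voronov-derived multibrackets $\mu_k^B$ associated with V-data $(C^\infty(T^*[2]A[1])[2],\Omega^\bullet(A)[2],P,\Theta_B)$ in terms of the Courant-algebroid geometry of $E=A\oplus B$.

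\medskip

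The plan is to compute the derived brackets $\mu_k^B(\alpha_1,\dots,\alpha_k)=P\{\{\dots\{\Theta_B,\alpha_1\},\dots\},\alpha_k\}$ directly, exploiting the bi-grading on $C^\infty(T^*[2]A[1])$ recalled before the proof of Lemma~\ref{lem:M2}: the first degree counts $(p_i,\theta_a)$ and the second counts $(p_i,\xi_a)$. Here forms $\alpha_j\in\Omega^{d_j}(A)$ sit in bi-degree $(0,d_j)$, the Poisson bracket $\{-,-\}$ has bi-degree $(-1,-1)$, and $\Theta_B$ — being cubic and encoding the anchor and bracket via \eqref{eq:Roytenberg-correspondence} — decomposes into components of bi-degree $(1,2)$, $(2,1)$ and $(3,0)$, according to whether it involves zero, one or two copies of $\Gamma(A^*)$ (equivalently, whether it contributes the de Rham part, the bracket part, or the Courant-tensor part). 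First I would observe that $P(f)$ picks out exactly the bi-degree $(0,\bullet)$ component of $f$; tracking first degrees through the iterated bracket then forces $\mu_k^B=0$ for $k\ge 4$ (the first degree becomes negative) and isolates, for $k=1,2,3$, precisely the $(1,2)$, $(2,1)$, $(3,0)$ components of $\Theta_B$ respectively. This is the bookkeeping heart of the argument and mirrors the bi-degree reasoning already used in Lemma~\ref{lem:M2}.

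\medskip

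Next I would identify each surviving bracket. For $\mu_1^B$: using \eqref{eq:Roytenberg-correspondence}, $\{\Theta_B,f\}$ for $f\in C^\infty(M)$ gives the anchor and $\{\{\Theta_B,u\},v\}$ gives the Dorfman bracket, and the standard fact (Roytenberg) that the homological vector field $\{\Theta_B,-\}$ restricted to $\fraka$ is the Lie algebroid de Rham differential $\rmd_A$ yields $\mu_1^B=\rmd_A$; this only sees the part of $\Theta_B$ built from the anchor of $A$ and the Lie bracket on $\Gamma(A)$. For $\mu_2^B$: the $(2,1)$ component of $\Theta_B$ is linear in $\Gamma(A^*)\cong B$, and contracting it twice reproduces, up to sign, the almost Gerstenhaber bracket $[-,-]_B$ extending $\pr_B\circ\ldsb-,-\rdsb$; checking the identity \eqref{eq:lem:L_infty-algebra_Dirac:binary_bracket} on generators $f\in C^\infty(M)$ and $u\in\Gamma(A^*)$ and extending by the (bi)derivation property pins down the Koszul sign $(-1)^{|\alpha|}$. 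For $\mu_3^B$: the $(3,0)$ component of $\Theta_B$ is the cubic function $\tfrac13\Upsilon_B$ (the Courant tensor of $B$, viewed via $B\cong A^*$), and brackets of it with three forms amount to contracting $\Upsilon_B$ with $\alpha_1^\flat\wedge\alpha_2^\flat\wedge\alpha_3^\flat$, using the formula \eqref{eq:contraction} $\{X,\beta\}=\iota_X\beta$ for $X\in\Gamma(A)$ established in the proof of Lemma~\ref{lem:M2}; the combinatorial factor and the sign $-(-1)^{|\beta|}$ fall out of the definition \eqref{eq:sharp:property1} of $\alpha_1^\flat\wedge\alpha_2^\flat\wedge\alpha_3^\flat$ exactly as in the computation \eqref{eq:M2}--\eqref{eq:withsign}. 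Finally I would note that the extra global minus sign on $\mu_2^B$ is the décalage correction (Rem.~\ref{rem:decalage}), and that the vanishing of $\mu_3^B$ when $B$ is Dirac is immediate from $\Upsilon_B=0$.

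\medskip

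The main obstacle is keeping the signs honest: there is a delicate interplay between the Koszul signs coming from the graded Poisson bracket $\{-,-\}$, the décalage shift by $2$, and the symmetrization in the definition of $\mu_k^B$ as a derived bracket versus its expression as a contraction with $\Upsilon_B$. Concretely, verifying \eqref{eq:lem:L_infty-algebra_Dirac:ternary_bracket} with the stated sign $-(-1)^{|\beta|}$ requires carefully iterating the Leibniz rule for $\{-,-\}$ three times, exactly as in \eqref{eq:M2}, and comparing with \eqref{eq:sharp:property1}; this is the one place where a naive computation is likely to produce the wrong sign (indeed, the paper explicitly flags that \cite[Lemma 2.6]{FZgeo} omitted this sign). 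Everything else — the bi-degree vanishing argument and the identification of $\mu_1^B$ with $\rmd_A$ — is routine once the bi-grading is set up. Since the statement is quoted from \cite[Lemma 2.6]{FZgeo}, I would present the proof as a recollection of that argument, emphasizing the bi-degree reasoning and the corrected sign, rather than reproving Roytenberg's correspondence from scratch.
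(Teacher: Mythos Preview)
The paper does not actually prove this lemma: it is quoted verbatim from \cite[Lemma~2.6]{FZgeo}, and the surrounding text in Appendix~\ref{app:Deformations_Dirac_Structures} merely sets up the V-data and then states the result, deferring to that reference for the argument (while adding the remark that the sign in the ternary bracket was omitted there). So there is no in-paper proof to compare against.

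That said, your proposed argument is the standard one and is correct in outline. The bi-degree bookkeeping you describe --- decomposing $\Theta_B$ into components of bi-degree $(1,2)$, $(2,1)$, $(3,0)$ and tracking how iterated brackets with forms of bi-degree $(0,\bullet)$ shift the first degree --- is exactly the mechanism used in \cite{FZgeo}, and the paper itself reuses the same bi-grading machinery in the proof of Lemma~\ref{lem:M2}. Your identification of the three surviving pieces with $\rmd_A$, the almost Gerstenhaber bracket, and the contraction with $\Upsilon_B$ is right, and you are also correct that the delicate point is the sign in~\eqref{eq:lem:L_infty-algebra_Dirac:ternary_bracket}. One small caution: in your sketch you refer to the ``extra global minus sign on $\mu_2^B$'' as a d\'ecalage correction, but be careful to distinguish the sign $(-1)^{|\alpha|}$ built into the formula~\eqref{eq:lem:L_infty-algebra_Dirac:binary_bracket} (which is the genuine derived-bracket sign) from the separate overall sign flip $\mu_2^B\mapsto -\mu_2^B$ discussed in Remark~\ref{rem:decalage} (which is applied \emph{after} the derived-bracket computation to match the Liu--Weinstein--Xu conventions). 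Keeping these two sign sources separate will prevent confusion when you write out the $\mu_3^B$ computation.
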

\noindent		
For exact Courant algebroids $E$, the above $L_\infty[1]$-algebra structure 
also appeared in  \cite[Cor.~3.7]{GMS}.

		\begin{remark}
			\label{rem:decalage}
			We list some remarks concerning the $L_{\infty}[1]$-algebra $\big(\Omega^\bullet(A)[2],\mu_1^{B},\mu_2^{B},\mu_3^{B}\big)$.
			\begin{enumerate}
				\item For any almost Dirac structure $B\subset E$ complementary to the Dirac structure $A\subset E$, the $L_\infty[1]$-algebra structure $\{\mu_k^B\}$ reduces to a dgL[1]a if and only if $B$ is Dirac. 
				\item Strictly speaking, the $L_{\infty}[1]$-algebra that one gets applying Voronov's derived bracket construction is $\big(\Omega^\bullet(A)[2],\mu_1^{B},-\mu_2^{B},\mu_3^{B}\big)$ , i.e. the one introduced in Lemma \ref{lem:2.6first} up to a global sign in the binary bracket. Note that changing the sign of the binary bracket produces an isomorphic $L_{\infty}[1]$-algebra; a strict $L_{\infty}[1]$-isomorphism is simply given by
				\[
				\big(\Omega^\bullet(A)[2],\mu_1^{B},\mu_2^{B},\mu_3^{B}\big)\rightarrow\big(\Omega^\bullet(A)[2],\mu_1^{B},-\mu_2^{B},\mu_3^{B}\big):\alpha\mapsto-\alpha.
				\]
				Introducing this additional minus sign has two advantages. First, it ensures that no minus sign appears in Lemma \ref{lem:2.6second} below. Second, in case the complement $B$ is Dirac, the $L_\infty$-algebra structure on $\Omega^\bullet(A)[1]$ corresponding with $\big(\Omega^\bullet(A)[2],\mu_1^{B},\mu_2^{B},\mu_3^{B}\big)$ under the décalage isomorphism \eqref{eq:decalage} reduces to the dgLa $(\Omega^\bullet(A)[1],d_A,[-,-]_{A^{*}})$ used in~\cite{liu1995manin}.
			\end{enumerate}
		\end{remark}

		Given a splitting $E=A\oplus B$ as in Lemma~\ref{lem:2.6first}, we now turn to the information encoded by the MC elements of the $L_\infty[1]$-algebra $(\Omega^\bullet(A)[2],\{\mu^B_k\})$.
		Using the identification \eqref{eq:VB-iso_A+A^ast}, the relation
		\begin{equation*}
			L=\gr(\eta)=\{\xi+\iota_\xi\eta\mid\xi\in A\}\subset A\oplus A^\ast\simeq A\oplus B=E.
		\end{equation*}
		yields a correspondence between $2$-forms $\eta\in\Omega^2(A)$ and almost Dirac structures $L\subset E$ close to $A$, in the sense that they are still transverse to $B$.
		As proven in~\cite[Thm.~6.1]{liu1995manin} (when the complement $B$ is Dirac) and~\cite[Lemma 2.6]{FZgeo} (in the general case), the MC elements of $(\Omega^\bullet(A)[2],\{\mu_k^B\})$ are exactly those $2$-forms $\eta\in\Omega^2(A)$ such that the corresponding almost Dirac structure $L$  is involutive, i.e.~Dirac. 
		\begin{lemma}[{\cite[Thm.~6.1]{liu1995manin} and \cite[Lemma 2.6]{FZgeo}}]
			\label{lem:2.6second}
			Let $E$ be a Courant algebroid and $A\subset E$ a Dirac structure. Fix an almost Dirac structure $B\subset E$ complementary to $A$.
			Then the relation
			\[
			L=\gr(\eta)
			\]
			establishes a canonical one-to-one correspondence between
			\begin{enumerate}
				\item MC elements $\eta$ of  $(\Omega^\bullet(A)[2],\{\mu^B_k\})$,
				\item Dirac structures $L\subset E$ transverse to $B$.
			\end{enumerate}
		\end{lemma}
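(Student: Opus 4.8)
The plan is to stay inside the graded-geometric picture of Appendix~\ref{app:Deformations_Dirac_Structures}: the Courant algebroid structure on $E=A\oplus B\cong A\oplus A^{\ast}$ is encoded by the degree-$3$ Hamiltonian $\Theta_B$ on $T^{\ast}[2]A[1]$ through \eqref{eq:Roytenberg-correspondence}, and $\{\mu_k^B\}$ are Voronov's derived brackets of $\Theta_B$ up to the sign in the binary bracket (Rem.~\ref{rem:decalage}). Since $B$ is transverse to $A$ and $L^{\perp}=L$, every almost Dirac complement of $B$ is the graph of a skew bundle map $A\to A^{\ast}\cong B$, i.e. $L=\gr(\eta)=\{u+\iota_u\eta\mid u\in A\}$ for a unique $\eta\in\Omega^2(A)$, and conversely each such $\eta$ yields an almost Dirac complement. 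As $\Omega^2(A)$ is exactly the degree-$0$ part of $\Omega^{\bullet}(A)[2]$, the only thing to prove is that $\gr(\eta)$ is \emph{involutive} precisely when $\eta$ satisfies the Maurer--Cartan equation.

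First I would pass from $\gr(\eta)$ back to the base Dirac structure $A$ by a twist. Viewing $\eta$ as a degree-$2$ function on $T^{\ast}[2]A[1]$, the derivation ${\sf m}:=\{\eta,-\}$ of $(C^{\infty}(T^{\ast}[2]A[1])[2],\{-,-\})$ has degree $0$ and is pronilpotent (it strictly lowers the $(p_i,\theta_a)$-bidegree, which is bounded below), so $e^{{\sf m}}$ is a well-defined graded Lie algebra automorphism. On linear functions $e^{{\sf m}}$ implements the orthogonal Courant automorphism $\tau_\eta\colon u+\alpha\mapsto u+\alpha-\iota_u\eta$ ($u\in A$, $\alpha\in A^{\ast}$), which carries $\gr(\eta)$ onto $A$; accordingly the transported structure function is $\Theta_\eta:=e^{{\sf m}}\Theta_B$, again a Maurer--Cartan element of $(C^{\infty}(T^{\ast}[2]A[1])[2],\{-,-\})$ and hence (by \cite{Roytenberg2002gradedsymplectic}) a Courant algebroid structure on $E$. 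By naturality of involutivity under Courant isomorphisms, $\gr(\eta)$ is a Dirac structure for $\Theta_B$ if and only if $A$ is a Dirac structure for $\Theta_\eta$. A short coordinate computation on $T^{\ast}[2]A[1]$ pins down the signs above and, in particular, rules out $\gr(-\eta)$.

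It then remains to identify ``$A$ is Dirac for $\Theta_\eta$'' with the Maurer--Cartan equation. The zero section $A$ is automatically almost Dirac, and for any degree-$3$ structure function $\Xi$ its Courant tensor (Rem.~\ref{rem:Courant_tensor}) equals the summand of $\Xi$ depending only on $(x_i,\xi_a)$, i.e. $P(\Xi)\in\Omega^3(A)$: for sections $u,v,w\in\Gamma(A)$ (functions linear in the momenta $\theta_a$) the expression $\{w,\{\{u,\Xi\},v\}\}$ simply reads off the $\xi^3$-coefficient of $\Xi$. Hence $A$ is Dirac for $\Theta_\eta$ iff $P(\Theta_\eta)=0$. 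Expanding the exponential, using $P(\Theta_B)=0$, and the derived-bracket formula of Prop.~\ref{prop:higher_derived_brackets} together with $P\{\{\ldots\{\Theta_B,\eta\},\ldots\},\eta\}_k=(-1)^kP({\sf m}^k\Theta_B)$, one gets
\[
P(\Theta_\eta)=P\big(e^{{\sf m}}\Theta_B\big)=\sum_{k\geq 1}\frac{1}{k!}\,P\big({\sf m}^{k}\Theta_B\big)=\sum_{k\geq 1}\frac{(-1)^{k}}{k!}\,\mu_k^{B}(\underbrace{\eta,\ldots,\eta}_{k})=\sum_{k\geq 1}\frac{1}{k!}\,\mu_k^{B}(\underbrace{-\eta,\ldots,-\eta}_{k}),
\]
so $P(\Theta_\eta)=0$ is the Maurer--Cartan equation evaluated at $-\eta$. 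Finally, $\tau\colon\alpha\mapsto-\alpha$ is a strict $L_\infty[1]$-isomorphism $(\Omega^{\bullet}(A)[2],\mu_1^B,\mu_2^B,\mu_3^B)\to(\Omega^{\bullet}(A)[2],\mu_1^B,-\mu_2^B,\mu_3^B)$ (Rem.~\ref{rem:decalage}) sending Maurer--Cartan elements to Maurer--Cartan elements, so ``$-\eta$ is MC for the Voronov brackets $(\mu_1^B,-\mu_2^B,\mu_3^B)$'' is equivalent to ``$\eta$ is MC for $\{\mu_k^B\}$''. This is exactly the sign convention that Lemma~\ref{lem:2.6first} and Rem.~\ref{rem:decalage} are set up to produce, and it yields $\gr(\eta)$ Dirac $\iff$ $\eta$ MC, which is the assertion; it recovers \cite[Thm.~6.1]{liu1995manin} when $B$ is Dirac ($\mu_3^B=0$) and \cite[Lemma~2.6]{FZgeo} in general.

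The main obstacle is not conceptual but book-keeping. One must (i) verify that $e^{{\sf m}}$ really implements $\tau_\eta$ with $\tau_\eta(\gr(\eta))=A$ — here a stray sign or factor would swap $\eta\leftrightarrow-\eta$ or rescale it; (ii) establish the identity ``Courant tensor of the zero section $=P(\Xi)$'' with all Koszul signs, which in the non-integrable case silently transports the $\Upsilon_B$-contribution into $\mu_3^B$ (this is the step where the Liu--Weinstein--Xu argument has to be genuinely extended, and where a global sign in the ternary bracket was originally dropped in \cite[Lemma~2.6]{FZgeo}); and (iii) keep the $\mu_2^B$-sign convention consistent throughout, so that the final answer is the Maurer--Cartan equation for $\{\mu_k^B\}$ as stated, rather than an alternating variant or a statement about $\gr(-\eta)$.
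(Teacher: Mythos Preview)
The paper does not supply its own proof of this lemma; it is stated in the appendix with citations to \cite[Thm.~6.1]{liu1995manin} and \cite[Lemma~2.6]{FZgeo}, and the preceding paragraph merely says ``As proven in~\cite{liu1995manin} \ldots and~\cite{FZgeo} \ldots''. So there is nothing to compare against in the paper itself. Your graded-symplectic argument---twisting $\Theta_B$ by $e^{\{\eta,-\}}$ and identifying $P(\Theta_\eta)$ with the Courant tensor of the zero section---is essentially the proof one finds in \cite{FZgeo}, and it is correct in outline.

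One notational slip to fix: in your displayed chain of equalities you write $\mu_k^B$, but what the derived-bracket computation actually produces are the \emph{Voronov} brackets $\calQ_k$ of Prop.~\ref{prop:higher_derived_brackets}, which by Rem.~\ref{rem:decalage} differ from $\mu_k^B$ precisely by a sign at $k=2$. Your subsequent sentence (``$-\eta$ is MC for the Voronov brackets $(\mu_1^B,-\mu_2^B,\mu_3^B)$'') shows you have the right structure in mind, and the $\tau$-argument then correctly converts this into ``$\eta$ is MC for $\{\mu_k^B\}$''. So the conclusion is right; just replace $\mu_k^B$ by $\calQ_k$ (or write out $(\mu_1^B,-\mu_2^B,\mu_3^B)$) in the display so that the formula matches the words. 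This is exactly the kind of $k=2$ sign you yourself flag in point (iii), and it is the only place your write-up is internally inconsistent.
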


		\bibliographystyle{plain}

	\end{document}